\documentclass[smallextended,referee,envcountsect]{svjour3}
\usepackage{amsfonts,amssymb,amsmath}
\usepackage{cite}
\usepackage{color}
\usepackage[all]{xy}
\usepackage[colorlinks,urlcolor=red]{hyperref}
\smartqed
\textwidth=16.7cm \oddsidemargin-.5cm \evensidemargin-.5cm
\textheight=25cm \headheight=.2cm \headsep=.5cm \topmargin=0cm

\lccode`\-=`\-
\defaulthyphenchar=127

\newcommand{\al}{\alpha}

\newcommand{\ga}{\gamma}

\newcommand{\de}{\delta}
\newcommand{\eps}{\varepsilon}
\newcommand{\bx}{\bar x}
\newcommand{\by}{\bar y}

\newcommand{\iv}{^{-1} }
\newcommand{\Real}{\mathbb R}
\newcommand {\R} {\mathbb R}
\newcommand {\N} {\mathbb N}

\newcommand {\B} {\mathbb B}
\newcommand {\Sp} {\mathbb S}
\newcommand {\gph} {{\rm gph}\,}
\newcommand {\dom} {{\rm dom}\,}

\newcommand {\Er} {{\rm Er}\,}

\newcommand {\sd} {\partial}

\renewcommand{\iff}{$ \Leftrightarrow\ $}
\newcommand{\folgt}{$ \Rightarrow\ $}

\newcommand{\ds}{\displaystyle}

\def\lsc{lower semicontinuous}

\def\RHS{right-hand side}
\def\SVM{set-valued mapping}

\newcommand{\norm}[1]{\left\Vert#1\right\Vert}

\newcommand{\set}[1]{\left\{#1\right\}}




\newcounter{mycount}
\def\cnta{\setcounter{mycount}{\value{enumi}}}
\def\cntb{\setcounter{enumi}{\value{mycount}}}



\makeatletter
\makeatother

\def\sr{{}^sr}

\def\smartqedtr{\def\qedtr{\ifmmode\triangle\else{\unskip\nobreak\hfil
\penalty50\hskip1em\null\nobreak\hfil$\triangle$
\parfillskip=0pt\finalhyphendemerits=0\endgraf}\fi}}
\smartqedtr  
\lccode`\-=`\-

\makeatletter
\makeatother


\renewcommand {\theenumi} {\roman{enumi}}

\vfuzz2pt 
\hfuzz3pt 

\begin{document}

\title{Nonlinear Metric Subregularity}

\author{Alexander Y. Kruger}
\institute{A. Y.~Kruger \at
Centre for Informatics and Applied Optimization, Faculty of Science and Technology, Federation University Australia, POB 663, Ballarat, Vic, 3350, Australia\\
\email{a.kruger@federation.edu.au} }


\date{Received: date / Accepted: date}

\maketitle

\begin{abstract}
In this article, we investigate nonlinear metric subregularity properties of set-valued mappings between general metric or Banach spaces.
We demonstrate that these properties can be treated in the framework of the theory of (linear) error bounds for extended real-valued functions of two variables developed in \textsf{A.~Y.~Kruger, Error bounds and metric subregularity, Optimization \textbf{64}, 1 (2015) 49--79}.
Several primal and dual space local quantitative and qualitative criteria of nonlinear metric subregularity are formulated.
The relationships between the criteria are established and illustrated.
\end{abstract}

\keywords{
error bounds \and slope \and metric regularity \and metric subregularity \and H\"older metric subregularity \and calmness}

\subclass{
49J52; 49J53; 58C06; 47H04; 54C60}

\section{Introduction}

The linear \emph{metric subregularity} property of set-valued mappings (cf., e.g., \cite{RocWet98,Mor06.1,DonRoc14,Pen13})
and closely related to it \emph{calmness} property play an important role in both theory and applications.
The amount of publications devoted to (mostly sufficient) primal and dual criteria of linear metric subregularity is huge.
The interested reader is referred to \cite{HenOut01,HenJou02,HenJouOut02,DonRoc04, ZheNg07,IofOut08,DonRoc14, Lev09,Pen10,ZheNg10,ZheOuy11,ZheNg12, ApeDurStr13,Kru15} and the references therein.

In many important applications like e.g. analysis of sensitivity and controllability in optimization and control (cf. Ioffe \cite{Iof13}), the standard linear metric (sub)regularity property is not satisfied, and more subtle nonlinear, mostly H\"older type estimates come into play.
The H\"older version of the more robust \emph{metric regularity} property
and even more general nonlinear regularity models have been studied since 1980s; cf. \cite{Fra87,BorZhu88,Fra89,Pen89,Jou91,YenYaoKie08, Iof00_, FraQui12,Ude12.2,
Iof13, Kru15.2}.
The history of the nonlinear/H\"older metric subregularity property seems to be significantly shorter with most work done in the last few years, cf. \cite{Kum09,GayGeoJea11,KlaKruKum12,LiMor12, MorOuy,NgaiTronThe3,NgaiTronThe4,NgaiTin15}, although some studies of such properties can be found in earlier publications, cf. e.g. Klatte \cite{Kla94} and Cornejo, Jourani \& Z{\u{a}}linescu \cite{CorJouZal97}.
To the best of our knowledge, general nonlinear subregularity models have not been studied so far.

There exists strong similarity between the definitions and criteria of linear and nonlinear metric subregularity of set-valued mappings and the well developed theory of error bounds (cf. \cite{Pang97,Aze03,AzeCor04,NgaiThe04,CorMot08,IofOut08, NgaiThe08, FabHenKruOut10,FabHenKruOut12}) of extended real-valued functions.
However, there is an obstacle, which prevents direct application of this theory to deducing criteria of metric subregularity, namely, the function involved in the definition of the metric subregularity property, in general, fails to be \lsc.
Nevertheless, many authors use error bound type arguments when proving metric subregularity criteria.
For that, they define auxiliary functions, which possess the lower semicontinuity property.
The details are usually hidden in the proofs.

Such an approach has been formalized and made explicit in \cite{Kru15} where the theory of local (linear) error bounds has been extended to functions on the product of metric spaces and applied to deducing linear metric subregularity criteria for set-valued mappings.
This extended theory of linear error bounds is applicable also to nonlinear subregularity models.
This has been demonstrated in \cite{Kru15.2} where H\"older metric subregularity has been investigated.
The current article targets several general settings of nonlinear metric subregularity, namely $f$-subregularity, $g$-subregularity, and $\varphi$-subregularity with each next regularity type being a special case of the previous one while H\"older metric subregularity is a special case of metric $\varphi$-subregularity.
This hierarchy of regularity properties translates naturally into the corresponding hierarchy of regularity criteria, illustrating clearly the relationship between the assumptions on the set-valued mapping, the regularity property under investigation and the resulting regularity criteria.

Following the standard trend initiated by Ioffe \cite{Iof00_} and Az\'e and Corvellec \cite{AzeCor04}, criteria for error bounds and metric subregularity of set-valued mappings in metric spaces are formulated in terms of (strong) \emph{slopes} \cite{DMT}.
Following \cite{Kru15,Kru15.2}, to simplify the statements in metric and also Banach/Asplund spaces, several other kinds of primal and dual space slopes for real-valued functions and set-valued mappings are discussed in this article and the relationships between them are formulated.
These relationships lead to a simple hierarchy of the error bound and metric subregularity criteria.

Recall that a Banach space is \emph{Asplund} iff the dual of each its separable subspace is separable; see, e.g., \cite{Mor06.1,BorZhu05} for discussions and characterizations of Asplund spaces.
Note that any \emph{Fr\'echet smooth} space, i.e. a Banach space, which admits an equivalent norm Fr\'echet differentiable at all nonzero points, is
Asplund.
Given a Fr\'echet smooth space, we will always assume that it is endowed with such a norm.

Some statements in the article look rather long because each of them contains an almost complete list of criteria applicable in the situation under consideration.
The reader is not expected to read through the whole list.
Instead, they can select a particular criterion or a group of criteria corresponding to the setting of interest to them (e.g., in metric or Banach/Asplund/smooth spaces, in the convex case, etc.)

The structure of the article is as follows.
The next section provides some preliminary definitions and facts, which are used throughout the article.
In Section~\ref{S1}, we present a survey of error bound criteria for a special family of extended-real-valued functions on the product of metric or Banach/Asplund spaces from \cite{Kru15,Kru15.2}.
The criteria are formulated in terms of several kinds of primal and subdifferential slopes.
The relationships between the slopes are presented.
Section~\ref{S3} is devoted to nonlinear metric subregularity of set-valued mappings with the main emphasis on metric $g$-sub\-regularity.
We demonstrate how the definitions of slopes and error bound criteria from Section~\ref{S1} translate into the corresponding definitions and criteria for metric $g$-subregularity.
Some new relationships between the slopes are established and, in finite dimensions, new objects -- limiting $g$-coderivatives -- are introduced and then used in dual space criteria of metric $g$-sub\-regularity.
In Section~\ref{S50}, we study a particular case of metric $g$-subregularity called metric $\varphi$-subregularity and using sharper tools (slopes) derive more specific regularity criteria.
The last section contains concluding remarks.

\section{Preliminaries}

Recall that
a set-valued mapping
$F:X\rightrightarrows Y$ is a mapping, which
assigns to every $x\in X$ a subset (possibly empty) $F(x)$ of $Y$.
We use the notation
$$\gph F:=\{(x,y)\in X\times Y\mid
y\in F(x)\}$$
for the graph of $F$ and $F\iv : Y\rightrightarrows
X$ for the inverse of $F$.
This inverse (which always exists with possibly empty values) is defined by
$$F\iv(y) :=\{x\in X|\, y\in F(x)\},\quad y\in Y,$$
and satisfies
$$(x,y)\in\gph F \quad\Leftrightarrow\quad (y,x)\in\gph F\iv.$$
If $X$ and $Y$ are linear spaces, we say that $F$ is convex iff $\gph F$ is a convex subset of $X\times Y$.

A set-valued mapping $F:X\rightrightarrows Y$ between metric spaces is called (locally) \emph{metrically subregular} at a point $(\bx,\by)\in\gph F$ with constant $\tau>0$ iff there exists a neighbourhood $U$ of $\bx$,
such that
\begin{equation}\label{MR0++}
\tau d(x,F^{-1}(\by))\le d(\by,F(x)) \quad \mbox{ for all } x \in U.
\end{equation}

This property represents a weaker version of the more robust \emph{metric regularity} property, which corresponds to replacing $\by$ in the above inequality by an arbitrary (not fixed!) $y$ in a neighbourhood of $\by$.

If instead of \eqref{MR0++} one uses the following more general condition:
\begin{equation}\label{02}
\tau d(x,F^{-1}(\by))\le (d(\by,F(x)))^q\quad \mbox{for all } x\in U,
\end{equation}
where $q\in(0,1]$, then the corresponding property is usually referred to as \emph{H\"older metric subregularity} of order $q$ at $(\bx,\by)$ with constant $\tau$.
The case $q=1$ corresponds to standard (linear) metric subregularity.
If $q_1<q_2\le1$, then H\"older metric subregularity of order $q_1$ is in general weaker than that of order $q_2$.

If fixed $\by$ in the above inequality is replaced by an arbitrary $y$ and the inequality is required to hold uniformly over all $y$ near $\by$, then we arrive at the definition of \emph{H\"older metric regularity} of order $q$.

One can easily see that H\"older metric subregularity property \eqref{02} is equivalent to the local error bound property of the extended real-valued function $x\mapsto(d(\by,F(x)))^q$ at $\bx$ (with the same constant).
So one might want to apply to this model the well developed theory of error bounds.
However, most of the error bound criteria are formulated for \lsc\ functions, while the function $x\mapsto(d(\by,F(x)))^q$ can fail to be \lsc\ even when $\gph F$ is closed.

Another helpful observation is that property \eqref{02} can be rewritten equivalently as
\begin{equation*}
\tau d(x,F^{-1}(\by))\le (d(\by,y))^q\quad \mbox{for all } x\in U,\; y\in F(x),
\end{equation*}
or
\begin{equation}\label{2f}
\tau d(x,F^{-1}(\by))\le f(x,y)\quad \mbox{for all } x\in U,\; y\in Y,
\end{equation}
where
\begin{equation}\label{2f2}
f(x,y):=
\begin{cases}
(d(y,\by))^q& \text{if } (x,y)\in\gph F,\\
+\infty & \text{otherwise}.
\end{cases}
\end{equation}

One can also consider property \eqref{2f} with $f:X\times Y\to\R_+\cup\{+\infty\}$ being a more general than \eqref{2f2} nonlinear function.
This property,  which we refer to as \emph{metric $f$-sub\-regularity}, is the main object of our study in this article.
The assumptions on function $f$, which are going to be specified in the next two sections allow us to treat property \eqref{2f} in the framework of the extended theory of error bounds of functions of two variables developed in \cite{Kru15} and used there and in \cite{Kru15.2} for characterizing linear and H\"older metric subregularity, respectively.

Two special cases of metric $f$-subregularity are of special interest: when
$$f(x,y)=g(y)+ i_{\gph F}(x,y),\quad x\in X,\; y\in Y,$$
where $g:Y\to\R_+$ and $i_{\gph F}$ is the \emph{indicator function} of $\gph F$ ($i_{\gph F}(x,y)=0$ if $(x,y)\in\gph F$ and $i_{\gph F}(x,y)=\infty$ otherwise) and when
$$g(y)=\varphi(d(y,\by)),\quad y\in Y,$$
where $\varphi:\R_+\to\R_+$.
We refer to these two properties as \emph{metric $g$-sub\-regularity} and \emph{metric $\varphi$-sub\-regularity}, respectively.
The particular assumptions on $g$ and $\varphi$ are discussed when the properties are defined.

Our basic notation is standard, see \cite{Mor06.1,RocWet98,DonRoc14,Pen13}.
Depending on the context, $X$ and $Y$ are either metric or normed
spaces.
Metrics in all spaces are denoted by the same symbol
$d(\cdot,\cdot)$;
$d(x,A):=\inf_{a\in{A}}d(x,a)$ is the point-to-set distance from
$x$ to $A$.
$B_\de(x)$ denotes the closed ball with radius $\de\ge0$ and centre $x$.
If not specified otherwise, the
product of metric/normed spaces is assumed equipped with the distance/norm given by the maximum of the distances/norms.

If $X$ and $Y$ are normed spaces, their topological duals are denoted $X^*$ and $Y^*$, respectively, while $\langle\cdot,\cdot\rangle$ denotes the bilinear form defining the pairing between the spaces.
The closed unit balls in a normed space and its dual are denoted by $\B$ and $\B^*$, respectively, while $\Sp$ and $\Sp^*$ stand for the unit spheres.

We say that a subset $\Omega$ of a metric space is locally closed near $\bar{x}\in\Omega$ iff $\Omega\cap{U}$ is closed for some closed neighbourhood $U$ of $\bar{x}$.
Given an $\al\in\R_\infty:=\R\cup\{+\infty\}$, $\alpha_+$ denotes its ``positive'' part:
$\alpha_+:=\max\{\alpha,0\}$.

If $X$ is a normed linear space, $f:X\to\Real_\infty$, $x\in{X}$, and $f(x)<\infty$, then
\begin{gather}
\partial{f}(x) := \left\{x^\ast\in X^\ast\bigl|\bigr.\;
\liminf\limits_{u\to x,\,u\ne x}
\frac{f(u)-f(x)-\langle{x}^\ast,u-x\rangle}
{\norm{u-x}} \ge 0 \right\}\label{Frsd}
\end{gather}
is the \emph{Fr\'echet subdifferential} of $f$ at $x$.
Similarly, if $x\in\Omega\subset X$, then
\begin{gather}
N_{\Omega}(x) := \left\{x^\ast\in X^\ast\bigl|\bigr.\
\limsup_{u\to x,\,u\in\Omega\setminus\{x\}} \frac {\langle x^\ast,u-x\rangle}
{\|u-x\|} \le 0 \right\}\label{Fr}
\end{gather}
is the \emph{Fr\'echet normal cone} to $\Omega$ at $x$.
In the convex case, sets \eqref{Frsd} and \eqref{Fr} reduce to the subdifferential and normal cone in the sense of convex analysis, respectively.
If $f(x)=\infty$ or $x\notin\Omega$, we set, respectively, $\partial{f}(x)=\emptyset$ or $N_{\Omega}(x)=\emptyset$.
Observe that definitions \eqref{Frsd} and \eqref{Fr} are invariant on the renorming of the space (replacing the norm by an equivalent one).

If $F:X\rightrightarrows Y$ is a set-valued mapping between normed linear spaces and $(x,y)\in\gph F$, then
 \[
D^{*}F(x,y)(y^{*}):=\set{x^{*}\in X^*\mid (x^{*},-y^{*})\in N_{\gph F}(x,y)},\quad y^{*}\in X^*
 \]
is the {\em Fr\'echet coderivative} of $F$ at $(x,y)$.

The proofs of the main statements rely on several kinds of \emph{subdifferential sum rules}.
Below we provide these results for completeness.

\begin{lemma}[Subdifferential sum rules] \label{l02}
Suppose $X$ is a normed linear space, $f_1,f_2:X\to\R_\infty$, and $\bx\in\dom f_1\cap\dom f_2$.

{\rm (i) \bf Fuzzy sum rule}. Suppose $X$ is Asplund,
$f_1$ is Lipschitz continuous and
$f_2$
is lower semicontinuous in a neighbourhood of $\bar x$.
Then, for any $\varepsilon>0$, there exist $x_1,x_2\in X$ with $\|x_i-\bar x\|<\varepsilon$, $|f_i(x_i)-f_i(\bar x)|<\varepsilon$ $(i=1,2)$, such that
$$
\partial (f_1+f_2) (\bar x) \subset \partial f_1(x_1) +\partial f_2(x_2) + \varepsilon\B^\ast.
$$

{\rm (ii) \bf Differentiable sum rule}. Suppose
$f_1$ is Fr\'echet differentiable at $\bx$.
Then,
$$
\partial (f_1+f_2) (\bar x) = \nabla f_1(\bx) +\partial f_2(\bx).
$$

{\rm (iii) \bf Convex sum rule}. Suppose
$f_1$ and $f_2$ are convex and $f_1$ is continuous at a point in $\dom f_2$.
Then,
$$
\partial (f_1+f_2) (\bar x) = \sd f_1(\bx) +\partial f_2(\bx).
$$
\end{lemma}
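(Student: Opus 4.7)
All three parts are classical facts of variational analysis, and I would handle the easier ones first. Part (ii), the differentiable sum rule, follows directly from definition \eqref{Frsd}: Fr\'echet differentiability of $f_1$ at $\bar x$ means $f_1(u)-f_1(\bar x)-\langle\nabla f_1(\bar x),u-\bar x\rangle=o(\|u-\bar x\|)$, so substituting into the $\liminf$ in \eqref{Frsd} applied to $f_1+f_2$ and subtracting $\langle\nabla f_1(\bar x),\cdot\rangle$ shows $x^*\in\partial(f_1+f_2)(\bar x)$ iff $x^*-\nabla f_1(\bar x)\in\partial f_2(\bar x)$, i.e.\ the claimed equality. Part (iii), the classical Moreau--Rockafellar theorem, has the inclusion ``$\supset$'' by summing subgradient inequalities; the reverse inclusion follows from a convex separation in $X\times\R$ between $\epi(f_1-\langle x^*,\cdot\rangle)$ and the hypograph of the affine lower bound of $-f_2$ determined by $x^*$, where continuity of $f_1$ at a point of $\dom f_2$ supplies the interior point needed for a nonvertical separating hyperplane whose slope splits into the required subgradients.

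The substantive work is (i), the fuzzy sum rule in Asplund spaces. I would argue as follows. Fix $\varepsilon>0$ and $x^*\in\partial(f_1+f_2)(\bar x)$. By \eqref{Frsd}, for any small $\eta>0$ the function $x\mapsto f_1(x)+f_2(x)-\langle x^*,x\rangle+\eta\|x-\bar x\|$ attains a near-minimum at $\bar x$ on some ball $B_\delta(\bar x)$. Decouple the two summands by passing to the Asplund space $X\times X$ with the auxiliary functional
\begin{equation*}
H(u,v) := f_1(u) + f_2(v) - \langle x^*, u\rangle + \eta(\|u-\bar x\|+\|v-\bar x\|) + c\|u-v\|^2,
\end{equation*}
where $c>0$ is large enough to force $u$ and $v$ to stay close near the minimum. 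Now apply the Borwein--Preiss smooth variational principle---whose availability in this form is exactly the Asplund hypothesis---to perturb $H$ by a small Fr\'echet smooth function so that the perturbed functional admits a strict minimum at some $(x_1,x_2)$ close to $(\bar x,\bar x)$. The Fr\'echet first-order condition at $(x_1,x_2)$, combined with the differentiable sum rule (ii) applied to absorb the smooth perturbation, yields $x_i^*\in\partial f_i(x_i)$ whose sum lies in $x^*+\varepsilon\B^*$. Lipschitz continuity of $f_1$ and lower semicontinuity of $f_2$ are used to ensure $|f_i(x_i)-f_i(\bar x)|<\varepsilon$ and that the penalized functional has an actual minimum rather than an unattained infimum.

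The hard part is clearly (i): parts (ii) and (iii) are routine, while (i) rests on the smooth variational principle and a careful decoupling in the product space, with the Lipschitz hypothesis on $f_1$ playing the dual role of controlling values of $f_1$ and ensuring well-posedness of the penalized minimization. Since this lemma is reproduced ``for completeness,'' I would expect the published proof to cite standard monograph references such as Mordukhovich \cite{Mor06.1} or Borwein--Zhu \cite{BorZhu05} rather than reproduce the machinery sketched above.
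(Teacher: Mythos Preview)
Your anticipation is exactly right: the paper gives no proof of Lemma~\ref{l02} and merely cites references---Fabian~\cite{Fab89} and \cite{Kru03.1,Mor06.1} for (i), \cite{Kru03.1,Mor06.1} for (ii), and Ioffe--Tikhomirov and Z\u{a}linescu for (iii)---so your closing prediction matches the paper's treatment.

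Your sketches for (ii) and (iii) are the standard arguments and are fine. One technical caveat on your outline of (i): the Borwein--Preiss smooth variational principle, in the form that produces a Fr\'echet-differentiable perturbation, requires a Fr\'echet smooth bump function, which is guaranteed in Fr\'echet smooth spaces but not in an arbitrary Asplund space. In full Asplund generality the standard proofs proceed via Fabian's separable reduction (reducing to a separable Asplund subspace, which \emph{does} admit a Fr\'echet smooth renorm) or via the extremal principle; your decoupling-plus-penalization architecture is correct, but the variational tool invoked needs this adjustment.
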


The first sum rule in the lemma above is known as the \emph{fuzzy} or \emph{approximate} sum rule (Fabian \cite{Fab89}; cf., e.g., \cite[Rule~2.2]{Kru03.1}, \cite[Theorem~2.33]{Mor06.1}) for Fr\'echet subdifferentials in Asplund spaces.
The other two are examples of \emph{exact} sum rules.
They are valid in arbitrary normed spaces (or even locally convex spaces in the case of the last rule).
Rule (ii) can be found, e.g., in \cite[Corollary~1.12.2]{Kru03.1} and \cite[Proposition~1.107]{Mor06.1}.
For rule (iii) we refer the readers to \cite[Theorem~0.3.3]{IofTik79} and
\cite[Theorem~2.8.7]{Zal02}.

The (normalized) \emph{duality mapping} $J$ between a normed space $Y$ and its dual $Y^*$ is defined as (cf. \cite[Definition~3.2.6]{Lucc06})
\begin{gather}\label{J}
J(y):=\left\{y^*\in \mathbb{S}_{Y^*}\mid \langle y^*,y\rangle=\norm{y}\right\},\quad y\in Y.
\end{gather}

\section{Error Bounds and Slopes}\label{S1}
In this section, we recall several facts about local error bounds for a special extended-real-valued function $f:X\times Y\to\R_\infty$ on a product of metric spaces in the framework of the general model developed in \cite{Kru15}.
The function is assumed to satisfy $f(\bx,\by)=0$ and
\begin{itemize}
\item [(P1)]
$f(x,y)>0$ if $y\ne\by$,
\item [(P2)]
$\ds\liminf_{f(x,y)\downarrow0}\frac{f(x,y)}{d(y,\by)}>0$.
\end{itemize}
In particular, $y\to\by$ if $f(x,y)\downarrow0$.

Function $f$ is said to have an \emph{error bound} with respect to $x$ at $(\bx,\by)$ with constant $\tau>0$ iff there exists a neighbourhood $U$ of $\bx$, such that
\begin{equation}\label{eb}
\tau d(x,S(f)) \le f_+(x,y)\quad \mbox{for all } x\in U,\; y\in Y,
\end{equation}
where
$$S(f):=\{x\in X|\ f(x,\by)\le0\}=\{x\in X|\ f(x,y)\le0 \mbox{ for some } y\in Y\}.$$
The \emph{error bound modulus}
\begin{gather}\label{rr2}
\Er f(\bx,\by):=
\liminf_{\substack{x\to\bx\\f(x,y)>0}} \frac{f(x,y)}{d(x,S(f))}
\end{gather}
coincides with the exact upper bound of all $\tau>0$, such that \eqref{eb} holds true for some neighbourhood $U$ of $\bx$ and provides a quantitative characterization of the \emph{error bound property}.
It is easy to check (cf. \cite[Proposition~3.1]{Kru15}), that
$$\ds\Er f(\bx,\by)=
\liminf_{\substack{x\to\bx,\,y\to\by\\f(x,y)>0}} \frac{f(x,y)}{d(x,S(f))}=
\liminf_{\substack{x\to\bx,\,f(x,y)\downarrow0}} \frac{f(x,y)}{d(x,S(f))}.$$

The case of local error bounds for a function $f:X\to\R_\infty$ of a single variable with $f(\bx)=0$ can be covered by considering its extension $\tilde f:X\times Y\to\R_\infty$ defined, for some $\by\in Y$, by
\begin{equation*}
\tilde f(x,y)=
\begin{cases}
f(x) &\mbox{if } y=\by,
\\
\infty &\mbox{otherwise}.
\end{cases}
\end{equation*}
Conditions (P1) and (P2) are obviously satisfied.

In the product space $X\times Y$, we are going to use the following asymmetric distance depending on a positive parameter $\rho$:
\begin{equation}\label{drho}
d_\rho((x,y),(u,v)):=\max\{d(x,u),\rho d(y,v)\}.
\end{equation}

Given an $(x,y)\in X\times Y$ with $f(x,y)<\infty$, the local (strong) \emph{slope} \cite{DMT} and \emph{nonlocal slope} \cite{FabHenKruOut12} of $f$ at $(x,y)$ take the following form:
\begin{gather}\label{ls-f}
|\nabla{f}|_{\rho}(x,y):=
\limsup_{\substack{u\to x,\,v\to y\\
(u,v)\ne(x,y)}}
\frac{[f(x,y)-f(u,v)]_+}{d_\rho((u,v),(x,y))},
\\\label{nls-f}
|\nabla{f}|_{\rho}^{\diamond}(x,y):=
\sup_{(u,v)\ne(x,y)}
\frac{[f(x,y)-f_+(u,v)]_+}{d_\rho((x,y),(u,v))}.
\end{gather}
They depend on $\rho$.
We are going to refer to them as the \emph{$\rho$-slope} and \emph{nonlocal $\rho$-slope} of $f$ at $(x,y)$.
In the sequel, superscript `$\diamond$' (diamond) will be used in all constructions derived from \eqref{nls-f} and its analogues to distinguish them from ``conventional'' (local) definitions.

Using \eqref{ls-f} and \eqref{nls-f}, one can define \emph{strict} (limiting) slopes related to the reference point $(\bx,\by)$:
\begin{gather}\label{ss-f}
\overline{|\nabla{f}|}{}^{>}(\bar{x},\by):=
\lim_{\rho\downarrow0}
\inf_{\substack{d(x,\bx)<\rho,\,0<f(x,y)<\rho}}\,
|\nabla{f}|_{\rho}(x,y),
\\\label{mss-f}
\overline{|\nabla{f}|}{}^{>+}(\bar{x},\by):=
\lim_{\rho\downarrow0}
\inf_{\substack{d(x,\bx)<\rho,\,0<f(x,y)<\rho}}\,
\max\left\{|\nabla{f}|_{\rho}(x,y), \frac{f(x,y)}{d(x,\bx)}\right\},
\\\label{uss-f}
\overline{|\nabla{f}|}{}^{\diamond}(\bar{x},\by):=
\lim_{\rho\downarrow0}
\inf_{\substack{d(x,\bx)<\rho,\,0<f(x,y)<\rho}}\,
|\nabla{f}|{}^{\diamond}_{\rho}(x,y).
\end{gather}
They are called, respectively, the \emph{strict outer slope}, \emph{modified strict outer slope}, and \emph{uniform strict outer slope} of $f$ at $(\bx,\by)$; cf. \cite{Kru15,Kru15.2}.
The word ``strict'' reflects the fact that $\rho$-slopes at nearby points contribute to definitions \eqref{ss-f}--\eqref{uss-f} making them analogues of the strict derivative.
The word ``outer'' is used to emphasize that
only points outside the set $S(f)$ are taken into account.
The word ``uniform'' emphasizes the nonlocal (non-limiting) character of $|\nabla{f}|_\rho^{\diamond}(x,y)$ involved in definition \eqref{uss-f}.
Observe that the definition of the modified strict outer slope \eqref{mss-f} contains under $\max$ a nonlocal (when $(x,y)$ is fixed) component $f(x,y)/d(x,\bx)$.

Constants \eqref{ls-f}--\eqref{uss-f} are nonnegative and can be infinite.
In \eqref{ss-f}--\eqref{uss-f}, the usual convention that the infimum of the empty set equals $+\infty$ is in force.
In these definitions, we have not only $x\to\bx$ and $f(x,y)\downarrow0$, but also the metric on $X\times Y$ used in the definitions of the corresponding $\rho$-slopes changing with the contribution of the $y$ component diminishing as $\rho\downarrow0$.

Now suppose that $X$ and $Y$ are normed linear spaces.
In the product space $X\times Y$,
we consider the following $\rho$-norm $\|\cdot\|_\rho$ being the realization of the $\rho$-metric \eqref{drho}:
\begin{gather*}\label{nrho}
\|(u,v)\|_\rho=\max\{\|u\|,\rho\|v\|\},\quad (u,v)\in X\times Y.
\end{gather*}
The corresponding dual norm (we keep for it the same notation $\|\cdot\|_\rho$) is of the form:
\begin{gather}\label{nrho*}
\|(u^*,v^*)\|_\rho=\|u^*\|+\rho\iv\|v^*\|,\quad (u^*,v^*)\in X^*\times Y^*.
\end{gather}

One can define subdifferential counterparts of the local slopes \eqref{ls-f}, \eqref{ss-f}, and \eqref{mss-f}:
the \emph{subdifferential $\rho$-slope} \begin{equation}\label{sds-f}
|\sd{f}|_{\rho}(x,y):=
\inf_{\substack{(x^*,y^*)\in\sd f(x,y),\,
\|y^*\|<\rho}} \|x^*\|
\end{equation}
of $f$ at $(x,y)$ ($f(x,y)<\infty$) and the \emph{strict outer} and, respectively, \emph{modified strict outer subdifferential slopes}
\begin{gather}\label{ssds-f}
\overline{|\sd{f}|}{}^{>}(\bar{x},\by):=
\lim_{\rho\downarrow0}
\inf_{\substack{\|x-\bx\|<\rho,\,0<f(x,y)<\rho}}\,
|\sd{f}|_{\rho}(x,y),
\\\label{mssds-f}
\overline{|\sd{f}|}{}^{>+}(\bar{x},\by):=
\lim_{\rho\downarrow0}
\inf_{\substack{\|x-\bx\|<\rho,\,0<f(x,y)<\rho}}
\max\left\{|\sd{f}|_{\rho}(x,y), \frac{f(x,y)}{\|x-\bx\|}\right\}
\end{gather}
of $f$ at
$(\bx,\by)$.

Note that, unlike the $\rho$-slope \eqref{ls-f}, the subdifferential $\rho$-slope \eqref{sds-f} is not the realization of the subdifferential slope \cite{FabHenKruOut10} for the case of a function of two variables.

The next proposition coming from \cite[Proposition~2]{Kru15.2} summarizes the relationships between the slopes.

\begin{proposition}[Relationships between slopes]\label{nc}
\begin{enumerate}
\item
$\ds|\nabla{f}|_{\rho}^{\diamond}(x,y)\ge
\max\left\{|\nabla{f}|_{\rho}(x,y), \frac{f(x,y)}{d_\rho((x,y),(\bx,\by))}\right\}$
for all $\rho>0$ and all $(x,y)\in X\times Y$ with $0<f(x,y)<\infty$;
\item
$\ds\overline{|\nabla{f}|}{}^{\diamond}(\bar{x},\by)\ge
\overline{|\nabla{f}|}{}^{>+}(\bx,\by)\ge
\overline{|\nabla{f}|}{}^{>}(\bx,\by)$.
\cnta
\end{enumerate}
Suppose $X$ and $Y$ are normed linear spaces.
\begin{enumerate}
\cntb
\item
$\ds\overline{|\sd{f}|}{}^{>+}(\bx,\by)\ge
\overline{|\sd{f}|}{}^{>}(\bx,\by)$;
\item
$|\nabla{f}|_{\rho}(x,y)\le
|\sd{f}|_{\rho^2}(x,y)+\rho$ for all $\rho>0$ and all $(x,y)\in X\times Y$ with $f(x,y)<\infty$;
\item
$\overline{|\nabla{f}|}{}^{>}(\bar{x},\by)\le
\overline{|\sd{f}|}{}^{>}(\bx,\by)$ and
$\overline{|\nabla{f}|}{}^{>+}(\bar{x},\by)\le
\overline{|\sd{f}|}{}^{>+}(\bx,\by)$;
\item\label{nc.5}
$\overline{|\nabla{f}|}{}^{>}(\bar{x},\by)=
\overline{|\sd{f}|}{}^{>}(\bx,\by)$ and $\overline{|\nabla{f}|}{}^{>+}(\bar{x},\by)=
\overline{|\sd{f}|}{}^{>+}(\bx,\by)$,\\
provided that one of the following conditions is satisfied:
\begin{enumerate}
\item
$X$ and $Y$ are Asplund and $f_+$ is \lsc\ near $(\bar{x},\by)$;
\item
$f$ is convex;
in this case {\rm (i)} and {\rm (ii)} also hold as equalities;
\item
$f$ is Fr\'echet differentiable near $(\bar{x},\by)$ except $(\bx,\by)$;
\item
$f=f_1+f_2$, where $f_1$ is convex near $(\bar{x},\by)$ and $f_2$ is Fr\'echet differentiable near $(\bar{x},\by)$ except $(\bx,\by)$.
\end{enumerate}
\end{enumerate}
\end{proposition}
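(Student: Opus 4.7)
The plan is to dispatch the six items in order. Items (i)--(v) follow from the definitions together with property (P2); the equality case (vi) is where the real work lies.

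Item (i) is immediate from the definitions: $|\nabla{f}|_{\rho}^{\diamond}(x,y)\ge|\nabla{f}|_{\rho}(x,y)$ because a supremum over all $(u,v)\ne(x,y)$ majorises the limsup as $(u,v)\to(x,y)$, and $|\nabla{f}|_{\rho}^{\diamond}(x,y)\ge f(x,y)/d_{\rho}((x,y),(\bx,\by))$ is obtained by testing the supremum at $(u,v)=(\bx,\by)$, which is admissible since $f(x,y)>0=f(\bx,\by)$. For (ii) and (iii) the second inequalities are trivial from $\max\{a,b\}\ge a$. The main point in (ii) is $\overline{|\nabla{f}|}{}^{\diamond}\ge\overline{|\nabla{f}|}{}^{>+}$: the gap between (i)'s denominator $d_{\rho}((x,y),(\bx,\by))=\max\{d(x,\bx),\rho d(y,\by)\}$ and the $d(x,\bx)$ appearing in $\overline{|\nabla{f}|}{}^{>+}$ is bridged by splitting the infimum domain into the region where $\rho d(y,\by)\le d(x,\bx)$, in which (i) directly gives the required bound, and its complement. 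On the complement, property (P2) supplies a constant $c>0$ with $d(y,\by)\le f(x,y)/c$ for $\rho$ small, forcing $|\nabla{f}|_{\rho}^{\diamond}(x,y)\ge f(x,y)/(\rho d(y,\by))\ge c/\rho\to\infty$, so these points do not influence the outer limit.

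Item (iv) is a direct application of the Fr\'echet subgradient inequality in the $\rho$-norm: for $(x^{*},y^{*})\in\sd f(x,y)$, expanding $f(u,v)-f(x,y)\ge\langle x^{*},u-x\rangle+\langle y^{*},v-y\rangle+o(\|(u,v)-(x,y)\|)$ and dividing by $d_{\rho}((u,v),(x,y))$, using $\|u-x\|\le d_{\rho}$ and $\|v-y\|\le\rho^{-1}d_{\rho}$, yields $|\nabla{f}|_{\rho}(x,y)\le\|x^{*}\|+\rho^{-1}\|y^{*}\|$. Restricting to subgradients with $\|y^{*}\|<\rho^{2}$ converts the second term into $\rho$ and infimising in $\|x^{*}\|$ produces $|\sd{f}|_{\rho^{2}}(x,y)+\rho$. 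Item (v) then follows by infimising (iv) over the domain $\{\|x-\bx\|<\rho,\,0<f(x,y)<\rho\}$, noting that this domain grows when $\rho$ is replaced by $\rho^{2}$, and passing to the limit with the substitution $\tau=\rho^{2}$; monotonicity of both outer slopes in $\rho$ makes the limits well defined.

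The genuine obstacle is the reverse inequality in (vi). Case (b) is painless, since for convex $f$ the subgradient inequality is global, so the estimate in (iv) holds as an equality and propagates to (i) and (ii) as equalities. Case (c) follows because at each Fr\'echet differentiability point $(x,y)\ne(\bx,\by)$ one has $\sd f(x,y)=\{\nabla f(x,y)\}$ and the primal $\rho$-slope coincides with the $\rho$-dual norm of the gradient. Case (d) is then obtained by combining (b) and (c) through the differentiable sum rule, Lemma~\ref{l02}(ii). The essential difficulty is (a): given $\alpha>|\nabla{f}|_{\rho}(x,y)$, I would apply Ekeland's variational principle to the auxiliary function $f_{+}+\alpha\|\cdot-(x,y)\|_{\rho}$ on a small $\rho$-ball to produce a strict minimiser, and then invoke the fuzzy sum rule (Lemma~\ref{l02}(i)) at that minimiser to extract a Fr\'echet subgradient of $f$ at a still nearer point whose $\rho$-dual norm is smaller than $\alpha$ up to a controlled error; passing to the infimum and then to $\rho\downarrow 0$ delivers $\overline{|\sd{f}|}{}^{>}\le\overline{|\nabla{f}|}{}^{>}$ and the $>+$ analogue. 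The delicate bookkeeping, keeping the Ekeland parameter, the fuzzy sum rule error, and the nested smallness conditions on $\rho$, $f(x,y)$ and $d(x,\bx)$ all compatible through the outer limit, is where I expect the technical effort to concentrate.
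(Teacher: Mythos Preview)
The paper does not actually prove this proposition: it merely states that Proposition~\ref{nc} ``coming from \cite[Proposition~2]{Kru15.2} summarizes the relationships between the slopes'', so there is no in-paper argument to compare against. Your outline is the standard route and is essentially correct.

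Two small remarks. First, in (i) you should note that the nonlocal slope \eqref{nls-f} uses $f_+$ while the local slope \eqref{ls-f} uses $f$; the reason the inequality $|\nabla f|_\rho^{\diamond}\ge|\nabla f|_\rho$ still holds at a point with $0<f(x,y)<\infty$ is that, by (P1), for $(u,v)$ sufficiently close to $(x,y)$ one has $f(u,v)>0$ (since $v\ne\by$ when $y\ne\by$), hence $f_+(u,v)=f(u,v)$ and the two integrands coincide in the limsup. Second, in (vi)(a) Ekeland's principle is superfluous: the bound $|\nabla f|_\rho(x,y)<\alpha$ already means that $(x,y)$ is a \emph{strict} local minimiser of $(u,v)\mapsto f(u,v)+\alpha\,d_\rho((u,v),(x,y))$, so one may apply the fuzzy sum rule (Lemma~\ref{l02}(i)) directly at $(x,y)$ to extract $(x^*,y^*)\in\partial f(x',y')$ with $\|x^*\|+\rho^{-1}\|y^*\|\le\alpha+\varepsilon$, and then carry out the bookkeeping you describe. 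Your identification of the fuzzy sum rule as the key device, and of the parameter-tracking as the only real labour, is exactly right.
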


\begin{remark}\label{R4}
One of the main tools in the proof of inequalities
\begin{align*}
\overline{|\nabla{f}|}{}^>(\bar{x},\by)
\ge\overline{|\partial{f}|}{}^>(\bar{x},\by),\qquad
\overline{|\nabla{f}|}{}^{>+}(\bar{x},\by)
\ge\overline{|\partial{f}|}{}^{>+}(\bar{x},\by)
\end{align*}
in item (a) of part (\ref{nc.5}) of the above proposition, which is crucial for the subdifferential sufficient error bound criteria, is the fuzzy sum rule (Lemma~\ref{l02}) for Fr\'echet subdifferentials in Asplund spaces.
It is possible to extend these inequalities to general Banach spaces by
replacing Fr\'echet subdifferentials with some other subdifferentials on the given space satisfying a certain set of natural properties including a kind of (fuzzy or exact) sum rule.
One can use for that purpose \emph{Ioffe approximate} or \emph{Clarke} subdifferentials.
Note that the opposite inequalities in part (v) are specific for Fr\'echet subdifferentials and fail in general for other types of subdifferentials.
\end{remark}

The uniform strict outer slope \eqref{uss-f} provides the necessary and sufficient characterization of error bounds \cite[Theorem 4.1]{Kru15}.

\begin{theorem}\label{3T1}
\begin{enumerate}
\item
$\Er f(\bx,\by)\le
\overline{|\nabla{f}|}{}^{\diamond}(\bar{x},\by)$;
\item
if $X$ and $Y$ are complete and $f_+$ is lower semicontinuous
near $(\bar{x},\by)$,
then
$\Er f(\bx,\by)=
\overline{|\nabla{f}|}{}^{\diamond}(\bar{x},\by)$.
\end{enumerate}
\end{theorem}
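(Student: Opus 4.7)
The two directions separate naturally: (i) is an immediate consequence of plugging admissible test pairs into the nonlocal slope, while (ii) requires Ekeland's variational principle on $X \times Y$ equipped with the rescaled metric $d_\rho$. The main obstacle lies in (ii), where one must produce points with small nonlocal $\rho$-slope near $(\bx, \by)$ from a violated error bound and then verify that these points satisfy all the constraints appearing in \eqref{uss-f}.

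\emph{Part (i).} Fix any $\tau > \overline{|\nabla f|}{}^{\diamond}(\bx, \by)$. By definition \eqref{uss-f}, for every sufficiently small $\rho > 0$ there exists a pair $(x, y)$ with $d(x, \bx) < \rho$, $0 < f(x, y) < \rho$, and $|\nabla f|_\rho^{\diamond}(x, y) < \tau$. Testing the supremum \eqref{nls-f} against pairs $(u, \by)$ with $u \in S(f)$ (where $f_+(u, \by) = 0$) gives
\[
f(x, y) \le \tau \max\{d(x, u),\, \rho\, d(y, \by)\}.
\]
Property (P2) supplies $c > 0$ with $d(y, \by) \le f(x, y)/c$ once $f(x, y)$ is small, so choosing $\rho < c/\tau$ forces the second argument of the max to be strictly less than $f(x, y)/\tau \le d(x, u)$, and the maximum reduces to $d(x, u)$. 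Taking infimum over $u \in S(f)$ then yields $f(x, y)/d(x, S(f)) \le \tau$ along a sequence with $x \to \bx$ and $f(x, y) > 0$, so the $\liminf$ defining $\Er f(\bx, \by)$ is $\le \tau$. Letting $\tau \downarrow \overline{|\nabla f|}{}^{\diamond}(\bx, \by)$ completes (i).

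\emph{Part (ii).} Now assume $X, Y$ are complete and $f_+$ is lower semicontinuous on a closed neighbourhood $V$ of $(\bx, \by)$. Fix $\tau > \Er f(\bx, \by)$ and choose $\tau' \in (\Er f(\bx, \by), \tau)$. By the $\liminf$ formulation \eqref{rr2}, there are $x_n \to \bx$ and $y_n$ with $0 < f(x_n, y_n) < \tau'\, d(x_n, S(f)) \le \tau'\, d(x_n, \bx) \to 0$, so by (P2) also $y_n \to \by$. For any fixed $\rho > 0$ and all sufficiently large $n$, apply Ekeland's variational principle to $f_+$ on the complete space $(V, d_\rho)$ at the $\epsilon_n$-minimizer $(x_n, y_n)$, with $\epsilon_n = f(x_n, y_n)$ and radius $\lambda_n = f(x_n, y_n)/\tau'$. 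This yields $(\hat x_n, \hat y_n) \in V$ with $d_\rho((\hat x_n, \hat y_n), (x_n, y_n)) \le \lambda_n$, $f(\hat x_n, \hat y_n) \le f(x_n, y_n)$, and
\[
f_+(\hat x_n, \hat y_n) \le f_+(u, v) + \tau'\, d_\rho((\hat x_n, \hat y_n), (u, v)) \quad \text{for all } (u, v) \in V.
\]

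Since $d(\hat x_n, x_n) \le \lambda_n < d(x_n, S(f))$, the point $\hat x_n$ lies outside $S(f)$, so $f(\hat x_n, \hat y_n) > 0$ and the left-hand side equals $f(\hat x_n, \hat y_n)$; rearranging the displayed bound gives $|\nabla f|_\rho^{\diamond}(\hat x_n, \hat y_n) \le \tau'$. Finally $d(\hat x_n, \bx) \to 0$ and $f(\hat x_n, \hat y_n) \to 0$, so $(\hat x_n, \hat y_n)$ satisfies the constraints in \eqref{uss-f} for all sufficiently large $n$. Hence $\overline{|\nabla f|}{}^{\diamond}(\bx, \by) \le \tau'$, and sending $\tau' \downarrow \Er f(\bx, \by)$ completes (ii).
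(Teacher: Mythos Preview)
Your approach is the standard one (the paper itself does not prove this result here, citing \cite[Theorem~4.1]{Kru15}), and Part~(i) is correct as written.

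In Part~(ii) there is a small but genuine gap. Applying Ekeland's principle on $(V, d_\rho)$ only yields
\[
f(\hat x_n, \hat y_n) - f_+(u, v) \le \tau'\, d_\rho((\hat x_n, \hat y_n), (u, v))\quad\text{for } (u, v) \in V,
\]
whereas $|\nabla f|_\rho^\diamond(\hat x_n, \hat y_n)$ in \eqref{nls-f} is the supremum over \emph{all} $(u,v)\in X\times Y$; rearranging the displayed bound alone therefore does not give $|\nabla f|_\rho^\diamond(\hat x_n,\hat y_n)\le\tau'$. You still need to handle $(u,v)\notin V$. The fix is routine: pick $\delta>0$ with the max-metric ball $B_\delta(\bx,\by)$ contained in $V$. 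Since $d(\hat y_n,y_n)\le\lambda_n/\rho\to0$ and $y_n\to\by$, one has $(\hat x_n,\hat y_n)\to(\bx,\by)$ in the max metric, so for large $n$ and any $(u,v)\notin V$ (and $\rho\le1$) one gets $d_\rho((\hat x_n,\hat y_n),(u,v))\ge\rho\,\delta/2$, while the numerator is at most $f(\hat x_n,\hat y_n)\to0$. Hence the quotient outside $V$ eventually falls below $\tau'$ as well, and only then does the full bound $|\nabla f|_\rho^\diamond(\hat x_n,\hat y_n)\le\tau'$ follow.
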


\begin{remark}\label{rm2}
The nonlocal $\rho$-slope \eqref{nls-f} depends on the choice of $\rho$-metric on the product space.
If instead of the metric $d_\rho$, defined by \eqref{drho}, one employs in \eqref{nls-f} the sum-type parametric metric $d_\rho^1$, defined by
\begin{gather*}
d_\rho^1((x,y),(u,v)):=d(x,u)+\rho d(y,v),
\end{gather*}
it will produce a different number.
We say that a $\rho$-metric $d'_\rho$ on $X\times Y$ is admissible iff $d_\rho\le d'_\rho\le d^1_\rho$.
Thanks to \cite[Proposition~4.2]{Kru15}, Theorem~\ref{3T1} is invariant on the choice of an admissible $\rho$-metric.
\end{remark}

Thanks to Theorem~\ref{3T1} and Proposition~\ref{nc}, one can formulate several quantitative and qualitative criteria of the error bound property in terms of various slopes discussed above; cf. \cite[Corollaries~1 and 2]{Kru15.2}.

\section{Nonlinear Metric Subregularity}\label{S3}

From now on, $F:X\rightrightarrows Y$ is a set-valued mapping between metric spaces and $(\bx,\by)\in\gph F$.
We are targeting several versions of the metric subregularity property, the main tool being the error bound criteria discussed in the previous section.

\subsection{Metric $f$-subregularity and metric $g$-subregularity}

Alongside the set-valued mapping $F$, we consider an extended-real-valued function $f:X\times Y\to\R_\infty$, satisfying the assumptions made in Section~\ref{S1}, i.e., $f(\bx,\by)=0$ and properties (P1) and (P2).
Additionally, we assume that $f$ takes only nonnegative values, i.e., $f:X\times Y\to\R_+\cup\{+\infty\}$, and
\begin{itemize}
\item [(P3)]
$f(x,y)=0$ iff $y=\by$ and $x\in F^{-1}(\by)$).
\end{itemize}
Hence, $S(f)=F^{-1}(\by)$.

We say that $F$ is \emph{metrically $f$-subregular} at $(\bx,\by)$ with constant $\tau>0$ iff there exists a neighbourhood $U$ of $\bx$, such that
\begin{equation}\label{MR2}
\tau d(x,F^{-1}(\by))\le f(x,y)\quad \mbox{for all } x\in U,\; y\in Y.
\end{equation}
Metric $f$-subregularity property can be characterized using the following (possibly infinite) constant:
\begin{gather}\label{CMR-g}
\sr_f[F](\bx,\by):=
\liminf_{\substack{x\to\bx\\x\notin F\iv(\by),\,y\in Y}} \frac{f(x,y)}{d(x,F^{-1}(\by))},
\end{gather}
which coincides with the supremum of all positive $\tau$, such that \eqref{MR2} holds for some $U$.

In the special case when $f$ is given by
$$f(x,y):=
\begin{cases}
d(y,\by) & \text{if } (x,y)\in\gph F,\\
+\infty & \text{otherwise},
\end{cases}
$$
conditions (P1)--(P3) are trivially satisfied and the metric $f$-subregularity reduces to the conventional metric subregularity (cf., e.g., \cite{Mor06.1,RocWet98,DonRoc14}).

In general,
property \eqref{MR2} is exactly the error bound property \eqref{eb} for the function $f$ while constant \eqref{CMR-g} coincides with \eqref{rr2}.
Hence, the main characterization of the metric $f$-subregularity is given by the above Theorem~\ref{3T1}, which yields a series of sufficient criteria in terms of various kinds of local slopes; cf. \cite[Corollaries~1 and 2]{Kru15.2}.
Note that one can always suppose $\tau=1$ in \eqref{MR2}: it is sufficient to replace function $f$ with $f/\tau$.
We keep $\tau$ in the definitions in this and subsequent sections for the purpose of uniformity of the presentation.

In the rest of the section, we consider a special case of metric $f$-subregularity of $F$ with $f$ defined by
\begin{gather}\label{gF2}
f(x,y)=g(y)+ i_{\gph F}(x,y),\quad x\in X,\; y\in Y,
\end{gather}
where $g:Y\to\R_+$ and $i_{\gph F}$ is the indicator function of $\gph F$: $i_{\gph F}(x,y)=0$ if $(x,y)\in\gph F$ and $i_{\gph F}(x,y)=\infty$ otherwise.

We say that $F$ is \emph{metrically $g$-subregular} at $(\bx,\by)$ with constant $\tau>0$ iff there exists a neighbourhood $U$ of $\bx$, such that
\begin{equation}\label{5MR2}
\tau d(x,F^{-1}(\by))\le g(y)\quad \mbox{for all } x\in U,\; y\in F(x).
\end{equation}

We will assume that $g(\by)=0$, $g$ is continuous at $\by$, locally Lipschitz continuous on $Y\setminus\{\by\}$ and satisfies the following properties:
\begin{itemize}
\item [(P1$'$)]
$g(y)>0$ if $y\ne\by$,
\item [(P2$'$)]
$\ds\liminf_{g(y)\downarrow0}\frac{g(y)}{d(y,\by)}>0$.
\end{itemize}
These properties obviously imply the corresponding properties (P1) and (P2) for the function $f$ defined by \eqref{gF2}.
Property (P3) is satisfied automatically.
Observe that, thanks to the continuity of $g$,
property (P2$'$) entails the equivalence $$g(y)\downarrow0\quad\Leftrightarrow\quad y\to\by,$$
which leads to simplifications in some definitions.

Metric $g$-subregularity \eqref{5MR2} can be equivalently characterized using the following constant being the realization of \eqref{CMR-g}:
\begin{gather}\label{5CMR-g}
\sr_g[F](\bx,\by):=
\liminf_{\substack{x\to\bx\\x\notin F\iv(\by),\,y\in F(x)}} \frac{g(y)}{d(x,F^{-1}(\by))}.
\end{gather}

\subsection{Primal space and subdifferential slopes}
The $\rho$-slope \eqref{ls-f} and nonlocal $\rho$-slope \eqref{nls-f} of $f$ at $(x,y)\in\gph F$ in the current setting can be rewritten as follows:
\begin{gather}\label{5ls}
|\nabla{F}|_{g,\rho}(x,y):=
\limsup_{\substack{(u,v)\to(x,y),\,
(u,v)\ne(x,y)\\(u,v)\in\gph F}}
\frac{[g(y)-g(v)]_+}{d_\rho((u,v),(x,y))},
\\\label{5nls}
|\nabla{F}|_{g,\rho}^{\diamond}(x,y):=
\sup_{\substack{(u,v)\ne(x,y)\\(u,v)\in\gph F}}
\frac{[g(y)-g(v)]_+}{d_\rho((u,v),(x,y))}.
\end{gather}
We will call the above constants, respectively, the \emph{$(g,\rho)$-slope} and \emph{nonlocal $(g,\rho)$-slope} of $F$ at $(x,y)$.

The strict slopes \eqref{ss-f}--\eqref{uss-f} produce the following definitions:
\begin{gather}\label{5ss}
\overline{|\nabla{F}|}{}_g(\bar{x},\by):=
\lim_{\rho\downarrow0}
\inf_{\substack{d(x,\bx)<\rho,\,d(y,\by)<\rho\\
(x,y)\in\gph F,\,x\notin F\iv(\by)}}\,
|\nabla{F}|_{g,\rho}(x,y),
\\\label{5mss}
\overline{|\nabla{F}|}{}_g^{+}(\bar{x},\by):=
\lim_{\rho\downarrow0}
\inf_{\substack{d(x,\bx)<\rho,\,d(y,\by)<\rho\\
(x,y)\in\gph F,\,x\notin F\iv(\by)}}\,
\max\left\{|\nabla{F}|_{g,\rho}(x,y), \frac{g(y)}{d(x,\bx)}\right\},
\\\label{5uss}
\overline{|\nabla{F}|}{}^{\diamond}_g(\bar{x},\by):=
\lim_{\rho\downarrow0}
\inf_{\substack{d(x,\bx)<\rho,\,d(y,\by)<\rho\\
(x,y)\in\gph F,\,x\notin F\iv(\by)}}\,
|\nabla{F}|{}^{\diamond}_{g,\rho}(x,y).
\end{gather}
They are called, respectively,
the \emph{strict $g$-slope}, \emph{modified strict $g$-slope}, and \emph{uniform strict $g$-slope} of $F$ at $(\bx,\by)$.

The continuity of $g$ was taken into account when writing down \eqref{5ss}--\eqref{5uss}.
Note that conditions $(x,y)\in\gph F$ and $x\notin F\iv(\by)$ imply $y\ne\by$.

If $X$ and $Y$ are normed linear spaces, the subdifferential slopes of $f$ are defined by \eqref{sds-f}, \eqref{ssds-f}, and \eqref{mssds-f}.
In the current setting, the last two constants take the following form:
\begin{gather}\label{5ssds}
\overline{|\sd{f}|}{}^>(\bar{x},\by)=
\lim_{\rho\downarrow0}
\inf_{\substack{\|x-\bx\|<\rho,\,\|y-\by\|<\rho\\
(x,y)\in\gph F,\,
x\notin F\iv(\by)}}\,
|\sd{f}|_{\rho}(x,y),
\\\label{5mssds}
\overline{|\sd{f}|}{}^{>+}(\bar{x},\by)=
\lim_{\rho\downarrow0}
\inf_{\substack{\|x-\bx\|<\rho,\,\|y-\by\|<\rho\\
(x,y)\in\gph F,\,
x\notin F\iv(\by)}}\,
\max\left\{|\sd{f}|_{\rho}(x,y), \frac{g(y)}{\|x-\bx\|}\right\}.
\end{gather}

The next statement is a consequence of Proposition~\ref{nc}.

\begin{proposition}[Relationships between slopes]\label{5P2}
\begin{enumerate}
\item
$\ds|\nabla{F}|_{g,\rho}^{\diamond}(x,y)\ge
\max\left\{|\nabla{F}|_{g,\rho}(x,y), \frac{g(y)}{d_\rho((x,y),(\bx,\by))}\right\}$
for all $\rho>0$ and all $(x,y)\in\gph F$;
\item
$\ds\overline{|\nabla{F}|}{}_g^{\diamond}(\bar{x},\by)\ge
\overline{|\nabla{F}|}{}_{g}^+(\bx,\by)\ge
\overline{|\nabla{F}|}{}_{g}(\bx,\by)$.
\cnta
\end{enumerate}
Suppose $X$ and $Y$ are normed spaces.
\begin{enumerate}
\cntb
\item
$|\nabla{F}|_{g,\rho}(x,y)\le
|\sd{f}|_{\rho^2}(x,y)+\rho$ for all $\rho>0$ and $(x,y)\in\gph F$;
\item
$\overline{|\nabla{F}|}{}_g(\bar{x},\by)\le
\overline{|\sd{f}|}{}^{>}(\bx,\by)$ and $\overline{|\nabla{F}|}{}_g^+(\bar{x},\by)\le
\overline{|\sd{f}|}{}^{+>}(\bx,\by)$;
\item\label{5P2.5}
$\overline{|\nabla{F}|}{}_g(\bar{x},\by)=
\overline{|\sd{f}|}{}^{>}(\bx,\by)$ and $\overline{|\nabla{F}|}{}_g^+(\bar{x},\by)=
\overline{|\sd{f}|}{}^{+>}(\bx,\by)$,\\
provided that one of the following conditions is satisfied:
\begin{enumerate}
\item
$X$ and $Y$ are Asplund and $\gph F$ is locally closed near $(\bar{x},\by)$;
\item
$F$ is convex and $g$ is either convex or Fr\'echet differentiable near $\by$ except $\by$;
if both $F$ and $g$ are convex, then {\rm (i)} and {\rm (ii)} also hold as equalities.
\end{enumerate}
\end{enumerate}
\end{proposition}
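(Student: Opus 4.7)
The strategy is to specialize Proposition~\ref{nc} to $f(x,y)=g(y)+i_{\gph F}(x,y)$ and to check that, under this choice, every slope of $f$ introduced in Section~\ref{S1} collapses to the corresponding $F$-slope defined in \eqref{5ls}--\eqref{5mssds}. Since $f$ is finite exactly on $\gph F$, where it coincides with $g(y)$, the constraint sets $\{0<f(x,y)<\rho\}$ appearing in \eqref{ss-f}--\eqref{mssds-f} collapse to $\{(x,y)\in\gph F:\,x\notin F\iv(\by),\;g(y)<\rho\}$, and the continuity of $g$ at $\by$ together with (P2$'$) lets us trade the constraint $g(y)<\rho$ for $d(y,\by)<\rho'$ with $\rho'\downarrow0$. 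With this dictionary in place, items (i)--(iv) of the proposition are direct transcriptions of items (i), (ii), (iv), (v) of Proposition~\ref{nc}.

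For the equality case (v)(a) I would verify the hypotheses of Proposition~\ref{nc}(v)(a): the product of two Asplund spaces is Asplund, and $f=g+i_{\gph F}$ is \lsc\ near $(\bx,\by)$ because $g$ is continuous there and $\gph F$ is locally closed; since $f\ge0$, $f_+=f$, so the required lower semicontinuity of $f_+$ follows. For (v)(b) in the fully convex case, $f$ is a sum of two convex functions and therefore convex, so Proposition~\ref{nc}(v)(b) supplies equalities in items (iii) and (iv) of the current proposition and, as the cited item explicitly states, also in items (i) and (ii).

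The main obstacle is the mixed subcase of (v)(b) in which $F$ is convex while $g$ is only Fr\'echet differentiable on a punctured neighbourhood of $\by$. A literal invocation of Proposition~\ref{nc}(v)(d) is blocked because $(x,y)\mapsto g(y)$ fails to be differentiable on the whole slice $X\times\{\by\}$, not only at $(\bx,\by)$. The remedy is to note that the infima in \eqref{5ssds}--\eqref{5mssds} only see points $(x,y)\in\gph F$ with $x\notin F\iv(\by)$, for which automatically $y\ne\by$; at every such point $g$ is Fr\'echet differentiable at $y$, so the exact differentiable sum rule Lemma~\ref{l02}(ii) yields
\[
\sd f(x,y)=(0,\nabla g(y))+N_{\gph F}(x,y),
\]
which is precisely the formula that drives the proof of Proposition~\ref{nc}(v)(d). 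Propagating this pointwise identity through the definitions \eqref{sds-f}, \eqref{5ssds}, \eqref{5mssds} produces the claimed equalities; the only work beyond citing Proposition~\ref{nc} is verifying that its proof of (v)(d) uses differentiability of the smooth summand solely at the points actually consulted by the strict slopes, which is exactly what happens in our setting.
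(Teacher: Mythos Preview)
Your approach is exactly the paper's: the proposition is stated as ``a consequence of Proposition~\ref{nc}'', and the only additional sentence the paper offers is that condition~(b) in part~(\ref{5P2.5}) ensures that $f=g+i_{\gph F}$ satisfies condition~(b) or~(d) in Proposition~\ref{nc}(\ref{nc.5}). You have reproduced this reduction correctly, and in fact you go further than the paper by flagging that in the mixed subcase (convex $F$, differentiable $g$) the summand $(x,y)\mapsto g(y)$ is not literally Fr\'echet differentiable on all of $X\times\{\by\}$, so Proposition~\ref{nc}(\ref{nc.5})(d) does not apply verbatim; your fix---noting that every point entering the strict-slope infima has $y\ne\by$, so the differentiable sum rule is available there---is the right patch and is implicit in how the paper uses this case elsewhere (cf.\ Proposition~\ref{5P3}(v)).
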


Observe that condition (b) in part \eqref{5P2.5} of Proposition~\ref{5P2} ensures that function $f$ defined by \eqref{gF2} satisfies either condition (b) or condition (d) in part {\rm (\ref{nc.5})} of Proposition~\ref{nc}.

Now we define the \emph{subdifferential $(g,\rho)$-slope} and \emph{approximate subdifferential $(g,\rho)$-slo\-pe} of $F$ at $(x,y)\in\gph F$:
\begin{gather}\label{5f02}
|\sd{F}|_{g,\rho}(x,y):=
\inf_{\substack{
x^*\in D^*F(x,y)(\sd g(y)+\rho\B^*)}}
\|x^*\|,
\\\label{5f02a}
|\sd{F}|^a_{g,\rho}(x,y):=
\liminf_{\substack{y'\to y}}\
\inf_{\substack{
x^*\in D^*F(x,y)(\sd g(y')+\rho\B^*)}}
\|x^*\|
\end{gather}
and use them to define strict subdifferential $g$-slopes:
\begin{gather}\label{5f03}
\overline{|\sd{F}|}{}_g(\bar{x},\by):=
\lim_{\rho\downarrow0}
\inf_{\substack{\|x-\bx\|<\rho,\,\|y-\by\|<\rho\\
(x,y)\in\gph F,\,x\notin F\iv(\by)}}\,
|\sd{F}|_{g,\rho}(x,y),
\\\label{5f03a}
\overline{|\sd{F}|}{}^a_g(\bar{x},\by):=
\lim_{\rho\downarrow0}
\inf_{\substack{\|x-\bx\|<\rho,\,\|y-\by\|<\rho\\
(x,y)\in\gph F,\,x\notin F\iv(\by)}} |\sd{F}|^a_{g,\rho}(x,y),
\\\label{5mf03}
\overline{|\sd{F}|}{}_g^{+}(\bar{x},\by):=
\lim_{\rho\downarrow0}
\inf_{\substack{\|x-\bx\|<\rho,\,\|y-\by\|<\rho\\
(x,y)\in\gph F,\,
x\notin F\iv(\by)}}\,
\max\left\{|\sd{F}|_{g,\rho}(x,y), \frac{g(y)}{\|x-\bx\|}\right\},
\\\label{5mf03a}
\overline{|\sd{F}|}{}_g^{a+}(\bar{x},\by):=
\lim_{\rho\downarrow0}
\inf_{\substack{\|x-\bx\|<\rho,\,\|y-\by\|<\rho\\
(x,y)\in\gph F,\,
x\notin F\iv(\by)}}\,
\max\left\{|\sd{F}|^a_{g,\rho}(x,y), \frac{g(y)}{\|x-\bx\|}\right\}.
\end{gather}
They are called, respectively,
the \emph{strict subdifferential $g$-slope}, \emph{approximate strict subdifferential $g$-slope}, \emph{modified strict subdifferential $g$-slope}, and \emph{modified approximate strict subdifferential $g$-slope} of $F$ at $(\bx,\by)$.

The next proposition gives relationships between the subdifferential slopes \eqref{sds-f}, \eqref{5ssds}--\eqref{5mf03a}.

\begin{proposition}[Relationships between subdifferential slopes]\label{5P3}
Suppose $X$ and $Y$ are normed spaces and $f$ is given by \eqref{gF2}.
\begin{enumerate}
\item
$|\sd{F}|^a_{g,\rho}(x,y) \le|\sd{F}|_{g,\rho}(x,y)$ for all $\rho>0$ and $(x,y)\in\gph F$;
\item
$\overline{|\sd{F}|}{}^a_g(\bar{x},\by) \le\overline{|\sd{F}|}{}_g(\bar{x},\by) \le\overline{|\sd{F}|}{}^+_g(\bar{x},\by)$ and\\ $\overline{|\sd{F}|}{}^{a}_g(\bar{x},\by) \le\overline{|\sd{F}|}{}^{a+}_g(\bar{x},\by) \le\overline{|\sd{F}|}{}^+_g(\bar{x},\by)$.
\cnta
\end{enumerate}
If $X$ and $Y$ are Asplund and $\gph F$ is locally closed near $(\bar{x},\by)$, then
\begin{enumerate}
\cntb
\item
$\ds|\sd{f}|_{\rho}(x,y)
\ge \liminf_{\substack{ (x',y')\to(x,y),\,y''\to y\\(x',y')\in\gph F}}\; \inf_{\substack{
x^*\in D^*F(x',y')(\sd g(y'')+\rho\B^*)}}
\|x^*\|$
for all $\rho>0$ and $(x,y)\in\gph F$ near $(\bar{x},\by)$;
\item
$\ds\overline{|\sd{f}|}{}^>(\bar{x},\by) \ge\overline{|\sd{F}|}{}^a_g(\bar{x},\by)$ and $\ds\overline{|\sd{f}|}{}^{>+}(\bar{x},\by) \ge\overline{|\sd{F}|}{}^{a+}_g(\bar{x},\by)$.
\cnta
\end{enumerate}
If either
$F$ and $g$ are convex or $g$ is Fr\'echet differentiable near $\by$ except $\by$, then
\begin{enumerate}
\cntb
\item
$\ds|\sd{f}|_{\rho}(x,y)
= |\sd{F}|_{g,\rho}(x,y)$ for all $\rho>0$ and $(x,y)\in\gph F$ near $(\bar{x},\by)$ with $y\ne\by$;
\item
$\ds\overline{|\sd{f}|}{}^>(\bar{x},\by) =\overline{|\sd{F}|}{}_g(\bar{x},\by)$ and $\ds\overline{|\sd{f}|}{}^{>+}(\bar{x},\by) =\overline{|\sd{F}|}{}^+_g(\bar{x},\by)$.
\end{enumerate}
\end{proposition}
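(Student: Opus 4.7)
My approach is to treat the six parts in their natural order, with the main technical work concentrated in (iii) and (v).

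Parts (i) and (ii) are immediate from the definitions. Taking $y'=y$ in the liminf in \eqref{5f02a} yields $|\sd{F}|^a_{g,\rho}(x,y)\le|\sd{F}|_{g,\rho}(x,y)$, which is (i). Pushing this inequality through the $\inf$ and $\lim_{\rho\downarrow 0}$ in \eqref{5f03}--\eqref{5mf03a}, and using the elementary fact $\max\{a,b\}\ge a$ to compare the modified and unmodified versions, then yields the chain of inequalities in (ii).

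For (iii), the key tool is the fuzzy sum rule from Lemma~\ref{l02}(i), applied to the decomposition $f=\tilde g+i_{\gph F}$ on $X\times Y$, where $\tilde g(u,v):=g(v)$. For $(x,y)\in\gph F$ near $(\bx,\by)$ with $x\notin F\iv(\by)$, one necessarily has $y\ne\by$, so $\tilde g$ is Lipschitz continuous near $(x,y)$ (since $g$ is locally Lipschitz on $Y\setminus\{\by\}$) and $i_{\gph F}$ is lower semicontinuous (since $\gph F$ is locally closed). Given $(x^*,y^*)\in\sd f(x,y)$ with $\|y^*\|<\rho$, pick any $\eps\in(0,\rho-\|y^*\|)$. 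The fuzzy sum rule produces $(x_1,y_1),(x_2,y_2)$ within $\eps$ of $(x,y)$ with $(x_2,y_2)\in\gph F$, and Fr\'echet subgradients $v_1^*\in\sd g(y_1)$ (via $\sd\tilde g(x_1,y_1)=\{0\}\times\sd g(y_1)$) and $(u_2^*,v_2^*)\in N_{\gph F}(x_2,y_2)$, such that $\|x^*-u_2^*\|+\|y^*-v_1^*-v_2^*\|<\eps$. Hence $\|u_2^*\|<\|x^*\|+\eps$ and $\|{-v_2^*}-v_1^*\|\le\|y^*\|+\eps<\rho$, so $-v_2^*\in\sd g(y_1)+\rho\B^*$; by the definition of the Fr\'echet coderivative, $u_2^*\in D^*F(x_2,y_2)(\sd g(y_1)+\rho\B^*)$. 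Taking the infimum over admissible $(x^*,y^*)$ and letting $\eps\downarrow 0$ yields the claimed bound.

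For (iv), take the $\inf$ over $(x,y)\in\gph F$ near $(\bx,\by)$ with $x\notin F\iv(\by)$ on both sides of (iii) and pass to $\lim_{\rho\downarrow 0}$; the point is that in the joint liminf on the right one has $y''\to y$ while $y'\to y$, forcing $y''$ eventually arbitrarily close to $y'$, which links the right-hand side to $|\sd F|^a_{g,\rho}(x',y')$ and produces $\overline{|\sd F|}{}^a_g(\bx,\by)$ in the limit. The modified variant is obtained by carrying the term $g(y)/\|x-\bx\|$ inside the max throughout. For (v), the fuzzy sum rule is replaced by an exact one: in the differentiable case, Lemma~\ref{l02}(ii) gives $\sd f(x,y)=\{(0,\nabla g(y))\}+N_{\gph F}(x,y)$, while in the convex case Lemma~\ref{l02}(iii) applies (using continuity of the convex $g$ at every point) and gives $\sd f(x,y)=\{0\}\times\sd g(y)+N_{\gph F}(x,y)$; in either case, imposing $\|y^*\|<\rho$ on $(x^*,y^*)\in\sd f(x,y)$ translates directly into $|\sd f|_\rho(x,y)=|\sd F|_{g,\rho}(x,y)$. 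Part (vi) then follows by applying $\inf$ and $\lim_{\rho\downarrow 0}$ to this pointwise equality. The main technical hurdle is the bookkeeping in part (iii), ensuring that the $\eps$-error from the fuzzy sum rule does not enlarge the ball attached to $\sd g(y_1)$ beyond $\rho\B^*$; choosing $\eps<\rho-\|y^*\|$ at the outset is precisely what prevents this.
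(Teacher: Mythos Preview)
Your proof is correct and follows essentially the same route as the paper: the fuzzy sum rule (Lemma~\ref{l02}(i)) applied to $f=\tilde g+i_{\gph F}$ for (iii), a passage to the limit for (iv), and the exact sum rules (Lemma~\ref{l02}(ii)/(iii)) for (v)--(vi). The only cosmetic difference is that you absorb the $\eps$-error into the $\rho$-ball by choosing $\eps<\rho-\|y^*\|$ up front, whereas the paper carries a separate $-\eps$ term and lets $\eps\downarrow0$ at the end; also note that (iii) is stated for \emph{all} $(x,y)\in\gph F$ near $(\bx,\by)$, not just those with $x\notin F^{-1}(\by)$, but the remaining case $y=\by$ is trivial since then $0\in\sd g(\by)$ forces the right-hand side to vanish.
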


\begin{proof}
Inequalities (i) and (ii) follow directly from the definitions.

(iii) Let $\rho>0$ and $(x,y)\in\gph F$ near $(\bar{x},\by)$ be given, such that $\gph F$ is locally closed and $g$ is \lsc\ near $(x,y)$.
The \emph{fuzzy sum rule} (Lemma~\ref{l02}) is applicable to $f$:
for any $\eps>0$,
\begin{gather}\label{5f01}
\sd f(x,y)
\subset\bigcup_{\substack{
\|(x',y')-(x,y)\|<\eps,\
(x',y')\in\gph F\\
(x^*,y^*)\in N_{\gph F}(x',y')\\
\|y''-y\|<\eps,\,v^*\in\sd g(y'')}} \{x^*,y^*+v^*\}+\eps\B_{X^*\times Y^*}.
\end{gather}
By definition \eqref{sds-f},
\begin{align*}
|\sd{f}|_{\rho}(x,y)
&\ge\lim_{\eps\downarrow0}\Biggl(\inf_{\substack{
\|(x',y')-(x,y)\|<\eps,\
(x',y')\in\gph F\\
(x^*,y^*)\in N_{\gph F}(x',y')\\
\|y''-y\|<\eps,\,v^*\in\sd g(y'')\\ \|y^*+v^*\|<\rho}} \|x^*\|-\eps\Biggr)
\\
&=\lim_{\eps\downarrow0}\inf_{\substack{
\|(x',y')-(x,y)\|<\eps,\
(x',y')\in\gph F\\
x^*\in D^*F(x',y')(y^*)\\
\|y''-y\|<\eps,\,v^*\in\sd g(y'')\\ \|y^*-v^*\|<\rho}} \|x^*\|
\\
&=\lim_{\eps\downarrow0}\inf_{\substack{
\|(x',y')-(x,y)\|<\eps,\
(x',y')\in\gph F\\
x^*\in D^*F(x',y')(\sd g(y'')+\rho\B^*)\\
\|y''-y\|<\eps}} \|x^*\|
\\
&=\lim_{\eps\downarrow0}\inf_{\substack{
\|(x',y')-(x,y)\|<\eps\\
(x',y')\in\gph F\\
\|y''-y\|<\eps}}\inf_{\substack{x^*\in D^*F(x',y')(\sd g(y'')+\rho\B^*)}} \|x^*\|
\\
&=\liminf_{\substack{ (x',y')\to(x,y),\,y''\to y\\(x',y')\in\gph F}}\; \inf_{\substack{x^*\in D^*F(x',y')(\sd g(y'')+\rho\B^*)}} \|x^*\|.
\end{align*}

(iv) By \eqref{5ssds} and (iii),
\begin{align*}
\overline{|\sd{f}|}{}^>(\bar{x},\by)
&\ge\lim_{\rho\downarrow0}
\inf_{\substack{\|(x,y)-(\bx,\by)\|<\rho\\
(x,y)\in\gph F,\,
x\notin F\iv(\by)}}\lim_{\eps\downarrow0}
\inf_{\substack{\|(x',y')-(x,y)\|<\eps,\, \|y''-y\|<\eps\\x^*\in D^*F(x',y')(\sd g(y'')+\rho\B^*)\\ (x',y')\in\gph F}}
\|x^*\|.
\end{align*}
For a fixed $(x,y)$ with $x\notin F\iv(\by)$ and a sufficiently small $\eps>0$, it holds $B_\eps(x)\cap F\iv(\by)=\emptyset$ and $\|(x,y)-(\bx,\by)\|+\eps<\rho$.
Besides, $\|y''-y'\|\le \|y''-y\|+\|y'-y\|<2\eps$.
Hence,
\begin{align*}
\overline{|\sd{f}|}{}^>(\bar{x},\by)
&\ge\lim_{\rho\downarrow0}
\inf_{\substack{\|(x',y')-(\bx,\by)\|<\rho\\
(x',y')\in\gph F,\,
x'\notin F\iv(\by)}}\,
\lim_{\eps\downarrow0}
\inf_{\substack{\|y''-y'\|<2\eps\\x^*\in D^*F(x',y')(\sd g(y'')+\rho\B^*)}}\
\|x^*\|
\\&
=\lim_{\rho\downarrow0}
\inf_{\substack{\|(x',y')-(\bx,\by)\|<\rho\\
(x',y')\in\gph F,\,
x'\notin F\iv(\by)}}\,
|\sd{F}|^a_{g,\rho}(x',y')
=\overline{|\sd{F}|}{}^a_g(\bar{x},\by).
\end{align*}
The proof of the other inequality goes along the same lines.

(v) The proof is similar to that of (iii).
Instead of the fuzzy sum rule, one can use either the differentiable rule (Lemma~\ref{l02}(ii)) or the convex sum rule (Lemma~\ref{l02}(iii)) to write down, for all $(x,y)\in\gph F$ near $(\bx,\by)$ with $y\ne\by$, the representation:
\begin{gather*}\label{5f05}
\sd f(x,y)
=\bigcup_{(x^*,y^*)\in N_{\gph F}(x,y)} \{x^*,\sd g(y)+y^*\},
\end{gather*}
where $\sd g(y)=\{\nabla g(y)\}$ if $g$ is differentiable at $y$.
By definition \eqref{sds-f},
\begin{align*}
|\sd{f}|_{\rho}(x,y)
&=\inf_{\substack{
(x^*,y^*)\in N_{\gph F}(x,y)\\
v^*\in\sd g(y),\, \|y^*+v^*\|<\rho}} \|x^*\|
=\inf_{\substack{
x^*\in D^*F(x,y)(\sd g(y)+\rho\B^*)}} \|x^*\| =|\sd{F}|_{g,\rho}(x,y).
\end{align*}

(vi) follows from (v) in view of representations \eqref{5ssds} and \eqref{5mssds}.
\qed\end{proof}

Proposition~\ref{5P3} allows one to eliminate subdifferential slopes of $f$ from the estimates in Proposition~\ref{5P2}.

\begin{proposition}[Relationships between slopes]\label{5P6}
Suppose $X$ and $Y$ are normed spaces.
\begin{enumerate}
\item
$\overline{|\nabla{F}|}{}_g(\bar{x},\by)\ge
\overline{|\sd{F}|}{}^a_{g}(\bx,\by)$ and $\overline{|\nabla{F}|}{}_g^+(\bar{x},\by)\ge
\overline{|\sd{F}|}{}_{g}^{a+}(\bx,\by)$,\\
provided that $X$ and $Y$ are Asplund and $\gph F$ is locally closed near $(\bar{x},\by)$;
\item
$\overline{|\nabla{F}|}{}_g(\bar{x},\by)=
\overline{|\sd{F}|}{}_{g}(\bx,\by)$ and $\overline{|\nabla{F}|}{}_g^+(\bar{x},\by)=
\overline{|\sd{F}|}{}_{g}^{+}(\bx,\by)$,\\
provided that
$g$ is Fr\'echet differentiable near $\by$ except $\by$ and one of the following conditions is satisfied:
\begin{enumerate}
\item
$X$ and $Y$ are Asplund and $\gph F$ is locally closed near $(\bar{x},\by)$;
\item
$F$ is convex;
\end{enumerate}
\item
$\ds\overline{|\nabla{F}|}{}^{\diamond}_g(\bar{x},\by)=
\overline{|\nabla{F}|}{}^{+}_g(\bx,\by)=
\overline{|\nabla{F}|}{}_g(\bx,\by)= \overline{|\sd{F}|}{}^{+}_g(\bx,\by)= \overline{|\sd{F}|}{}_g(\bx,\by)$,
provided that
$F$ and $g$ are convex.
\end{enumerate}
\end{proposition}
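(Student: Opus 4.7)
The plan is to deduce Proposition~\ref{5P6} by concatenating the two preceding propositions: Proposition~\ref{5P2} links the primal $g$-slopes of $F$ with the subdifferential slopes $\overline{|\sd f|}{}^>(\bx,\by), \overline{|\sd f|}{}^{>+}(\bx,\by)$ of the function $f$ defined in \eqref{gF2}, while Proposition~\ref{5P3} links those in turn with the subdifferential $g$-slopes of $F$. The intermediate object $\overline{|\sd f|}{}^>(\bx,\by)$ serves as a pivot that is eliminated in the final statement; no genuinely new estimate is required.

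For part (i), under the Asplund hypothesis and local closedness of $\gph F$, Proposition~\ref{5P2}(\ref{5P2.5})(a) yields the equalities $\overline{|\nabla F|}{}_g(\bx,\by) = \overline{|\sd f|}{}^>(\bx,\by)$ and $\overline{|\nabla F|}{}_g^+(\bx,\by) = \overline{|\sd f|}{}^{>+}(\bx,\by)$. Proposition~\ref{5P3}(iv), which itself rests on the fuzzy sum rule through its part (iii), then gives the inequalities $\overline{|\sd f|}{}^>(\bx,\by) \ge \overline{|\sd F|}{}^a_g(\bx,\by)$ and $\overline{|\sd f|}{}^{>+}(\bx,\by) \ge \overline{|\sd F|}{}^{a+}_g(\bx,\by)$. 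Composing the equality and the inequality produces the claim.

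For part (ii), the added Fr\'echet differentiability of $g$ on $Y\setminus\{\by\}$ activates the equality version of Proposition~\ref{5P3}, namely part (vi), upgrading the previous inequalities to $\overline{|\sd f|}{}^> = \overline{|\sd F|}{}_g$ and $\overline{|\sd f|}{}^{>+} = \overline{|\sd F|}{}^+_g$. Under (a) Asplund plus local closedness, Proposition~\ref{5P2}(\ref{5P2.5})(a) applies; under (b), the hypotheses ``$F$ convex and $g$ Fr\'echet differentiable near $\by$ except at $\by$'' satisfy the premise of Proposition~\ref{5P2}(\ref{5P2.5})(b). In either case the two equalities combine to deliver (ii). For part (iii), with both $F$ and $g$ convex, the last clause of Proposition~\ref{5P2}(\ref{5P2.5})(b) strengthens items (i) and (ii) of Proposition~\ref{5P2} into equalities, so $\overline{|\nabla F|}{}^\diamond_g = \overline{|\nabla F|}{}^+_g = \overline{|\nabla F|}{}_g$; simultaneously, Proposition~\ref{5P2}(\ref{5P2.5}) gives $\overline{|\nabla F|}{}_g = \overline{|\sd f|}{}^>$ together with its $+$ analogue, and Proposition~\ref{5P3}(vi) in its convex branch supplies $\overline{|\sd f|}{}^> = \overline{|\sd F|}{}_g$ and $\overline{|\sd f|}{}^{>+} = \overline{|\sd F|}{}^+_g$. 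Stringing these together closes the chain.

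The main obstacle is not analytic but one of bookkeeping: in each scenario (Asplund/locally closed vs.\ $F$ convex vs.\ $g$ differentiable) one must verify that the stipulated hypotheses trigger precisely the correct branch of Proposition~\ref{5P3} (fuzzy-sum-rule part (iv) when only $a$-slopes are available, differentiable-or-convex-sum-rule part (vi) when the approximate modifier can be dropped) and the matching branch of Proposition~\ref{5P2}(\ref{5P2.5}). Once the hypotheses are matched branch-by-branch, the conclusions follow by transitivity of equalities and inequalities, with no further computation needed.
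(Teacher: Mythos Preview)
Your proposal is correct and follows precisely the approach the paper itself indicates: the paper does not give an explicit proof of Proposition~\ref{5P6} but states just before it that ``Proposition~\ref{5P3} allows one to eliminate subdifferential slopes of $f$ from the estimates in Proposition~\ref{5P2},'' which is exactly the pivot-and-eliminate argument you carry out. Your branch-by-branch matching of hypotheses to the relevant parts of Propositions~\ref{5P2} and~\ref{5P3} is accurate and nothing further is needed.
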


\subsection{Limiting $g$-coderivatives}
In finite dimensions, strict subdifferential $g$-slopes \eqref{5f03} and \eqref{5f03a} can be equivalently expressed in terms of certain kinds of limiting coderivatives.

The \emph{limiting outer $g$-coderivative} $\overline{D}{}^{*>}_gF(\bx,\by)$ and the \emph{approximate limiting outer $g$-coderivative} $\overline{D}{}^{*>a}_gF(\bx,\by)$ of $F$ at $(\bx,\by)$ are defined by their graphs as follows:
\begin{align}\notag
\gph\overline{D}{}^{*>}_gF (\bx,\by):=& \{(y^*,x^*)\in Y^*\times X^*\mid
\exists (x_k,y_k,x^*_k,y^*_k,v^*_k)\subset X\times Y\times X^*\times Y^*\times Y^*\;\mbox{such that}
\\\notag
&(x_k,y_k)\in\gph F,\;x_k\notin F\iv(\by),\;
(y_k^*,x_k^*)\in\gph{D}{}^{*}F(x_k,y_k),\; \\\notag
&v^*_k\in\sd g(y_k),\;
(x_k,y_k)\to(\bx,\by),\;y^*_k-v^*_k\to0,\; \|y^*\|x^*_k\to x^*,
\\\label{D*1}
&\mbox{if}\; y^*\ne0,\; \mbox{then either}\; y_k^*\ne0\; (\forall k\in\N)\;\mbox{and}\; \frac{y^*_k}{\|y_k^*\|}\to\frac{y^*}{\|y^*\|},\;
\mbox{or}\; y_k^*=0\; (\forall k\in\N)\},
\\\notag
\gph\overline{D}{}^{*>a}_gF (\bx,\by):=& \{(y^*,x^*)\in Y^*\times X^*\mid
\exists (x_k,y_k,x^*_k,y^*_k,v^*_k)\subset X\times Y\times X^*\times Y^*\times Y^*\;\mbox{such that}
\\\notag
&(x_k,y_k)\in\gph F,\;x_k\notin F\iv(\by),\;
(y_k^*,x_k^*)\in\gph{D}{}^{*}F(x_k,y_k),\;
\\\notag
&v^*_k\in\overline{\sd}g(y_k),\;
(x_k,y_k)\to(\bx,\by),\;y^*_k-v^*_k\to0,\; \|y^*\|x^*_k\to x^*,
\\\label{D*3}
&\mbox{if}\; y^*\ne0,\; \mbox{then either}\; y_k^*\ne0\; (\forall k\in\N)\;\mbox{and}\; \frac{y^*_k}{\|y_k^*\|}\to\frac{y^*}{\|y^*\|},\;
\mbox{or}\; y_k^*=0\; (\forall k\in\N)\},
\end{align}
where
\begin{align*}
\overline{\sd}g(v):= \{v^*\in & Y^*\mid \exists (v_k,v^*_k)\to(v,v^*)
\quad\mbox{such that}\quad v^*_k\in\sd g(v_k)\}
\end{align*}
is the \emph{limiting subdifferential} of $g$ at $y$; cf. \cite{Mor06.1,RocWet98}.

The sets defined by \eqref{D*1} and \eqref{D*3} are closed cones in $X\times Y$.
Hence, all limiting outer $g$-coderivatives are closed positively homogeneous \SVM s.

\begin{proposition}\label{ows}
Suppose $X$ and $Y$ are finite dimensional normed linear spaces.
The following equalities hold true.
\begin{enumerate}
\item
$\overline{|\sd{F}|}{}_g(\bar{x},\by) =\inf\limits_ {\substack{x^*\in\overline{D}{}^{*>}_gF(\bx,\by) (\Sp^*_{Y^*})}} \|x^*\|$,
\item
$\overline{|\sd{F}|}{}^{a}_g(\bar{x},\by) =\inf\limits_ {\substack{x^*\in\overline{D}{}^{*>a}_gF(\bx,\by) (\Sp^*_{Y^*})}} \|x^*\|$.
\end{enumerate}
\end{proposition}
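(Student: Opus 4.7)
My plan is to prove both inequalities in each part of the proposition. Write $\alpha:=\overline{|\sd F|}{}_g(\bx,\by)$ and $\beta:=\inf\{\|x^*\|:x^*\in\overline{D}{}^{*>}_gF(\bx,\by)(\Sp^*_{Y^*})\}$, and similarly $\alpha^a,\beta^a$ for the approximate objects, and establish $\alpha=\beta$ and $\alpha^a=\beta^a$ by proving the two separate inequalities. Since $X$ and $Y$ are finite dimensional, closed bounded sets and unit spheres are compact, and the graph of each Fr\'echet coderivative $D^*F(x,y)$ is a closed cone in $Y^*\times X^*$.

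For $\alpha\le\beta$ in part (i), I start from an arbitrary $(y^*,x^*)\in\gph\overline{D}{}^{*>}_gF(\bx,\by)$ with $\|y^*\|=1$ together with a defining sequence $(x_k,y_k,x_k^*,y_k^*,v_k^*)$. Since $\|y^*\|=1$ the scaling condition collapses to $x_k^*\to x^*$. Given $\eps>0$, for $k$ large $\|y_k^*-v_k^*\|<\eps$, so $y_k^*\in\sd g(y_k)+\eps\B^*$, hence $x_k^*\in D^*F(x_k,y_k)(\sd g(y_k)+\eps\B^*)$, giving $|\sd F|_{g,\eps}(x_k,y_k)\le\|x_k^*\|$. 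Since $(x_k,y_k)$ satisfies the $\rho=\eps$ constraints in the definition of $\overline{|\sd F|}{}_g(\bx,\by)$ for large $k$, letting $k\to\infty$ and then $\eps\downarrow0$ yields $\alpha\le\|x^*\|$, and arbitrariness of $(y^*,x^*)$ gives $\alpha\le\beta$. For $\alpha\ge\beta$, assume $\alpha<\infty$ and build a minimizing sequence: pick $\rho_k\downarrow0$ and $(x_k,y_k,v_k^*,y_k^*,x_k^*)$ with $(x_k,y_k)\in\gph F$ satisfying the $\rho_k$-constraints, $v_k^*\in\sd g(y_k)$, $\|y_k^*-v_k^*\|<\rho_k$, $x_k^*\in D^*F(x_k,y_k)(y_k^*)$, and $\|x_k^*\|<\alpha+1/k$. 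Compactness extracts $x_k^*\to x^*$ with $\|x^*\|\le\alpha$; a further subsequence falls into one of the alternatives in the definition of $\overline{D}{}^{*>}_gF$, namely either $y_k^*\ne0$ for all $k$ with $y_k^*/\|y_k^*\|\to\hat y^*\in\Sp^*_{Y^*}$ (set $y^*:=\hat y^*$), or $y_k^*=0$ for all $k$ (take any $y^*\in\Sp^*_{Y^*}$, noting then $v_k^*\to0$). Either way $(y^*,x^*)\in\gph\overline{D}{}^{*>}_gF(\bx,\by)$ with $\|y^*\|=1$ and $\|x^*\|\le\alpha$, so $\beta\le\alpha$.

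Part (ii) follows the same template, but I must reconcile Fr\'echet subgradients at nearby points $y'$ used in the $\liminf_{y'\to y}$ appearing in $|\sd F|^a_{g,\rho}$ with limiting subgradients at $y_k$ itself used in $\overline{D}{}^{*>a}_gF$. For $\alpha^a\le\beta^a$, I unpack each $v_k^*\in\overline{\sd}g(y_k)$ into a sequence $(y_{k,j},v_{k,j}^*)\to(y_k,v_k^*)$ with $v_{k,j}^*\in\sd g(y_{k,j})$; for fixed large $k$ and $j$ large, $y_k^*\in\sd g(y_{k,j})+\eps\B^*$, and taking $j\to\infty$ along $y'=y_{k,j}\to y_k$ inside $\liminf_{y'\to y_k}$ gives $|\sd F|^a_{g,\eps}(x_k,y_k)\le\|x_k^*\|$; the rest is as before. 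For $\alpha^a\ge\beta^a$, the minimizing sequence produces, for each $k$, a sequence $y_k^{(j)}\to y_k$ with $v_k^{(j)*}\in\sd g(y_k^{(j)})$ and $x_k^{(j)*}\in D^*F(x_k,y_k)(v_k^{(j)*}+\rho_k\B^*)$ satisfying $\|x_k^{(j)*}\|<\alpha^a+2/k$. Since $y_k\ne\by$ (from $x_k\notin F^{-1}(\by)$ together with $(x_k,y_k)\in\gph F$) and $g$ is locally Lipschitz on $Y\setminus\{\by\}$, the $v_k^{(j)*}$ are bounded uniformly in $j$; compactness and closedness of $\gph D^*F(x_k,y_k)$ then extract limits $v_k^*\in\overline{\sd}g(y_k)$ and $x_k^*\in D^*F(x_k,y_k)(y_k^*)$ with $\|y_k^*-v_k^*\|\le\rho_k$ and $\|x_k^*\|\le\alpha^a+2/k$. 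I then run the same $k\to\infty$ extraction as in part (i).

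The principal technical obstacle is the diagonal passage to the limit in part (ii): one must let $j\to\infty$ for each fixed $k$ before letting $k\to\infty$, and the inner limit requires $j$-uniform boundedness of the Fr\'echet subgradients $v_k^{(j)*}\in\sd g(y_k^{(j)})$. This is exactly what the standing assumption that $g$ is locally Lipschitz on $Y\setminus\{\by\}$ delivers, once one observes that $y_k\ne\by$ automatically. A secondary but recurring cosmetic subtlety in both parts is the case split dictated by the two alternatives in the definition of $\overline{D}{}^{*>}_gF$ and its approximate variant, namely $y_k^*\equiv0$ versus having a normalized limit; the otherwise awkward scaling $\|y^*\|x_k^*\to x^*$ appearing in these definitions is tailored precisely so that, for $\|y^*\|=1$, both alternatives yield the same clean conclusion $x_k^*\to x^*$.
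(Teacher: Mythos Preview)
Your proof is correct and follows essentially the same approach as the paper's: for each inequality you unpack the definitions, use compactness in finite dimensions to extract convergent subsequences, and handle the $y_k^*=0$ versus $y_k^*\ne0$ dichotomy exactly as the limiting $g$-coderivative definition requires. The paper treats (i) in the same way and dismisses (ii) with ``with minor changes''; your explicit diagonal argument for (ii)---passing first to the inner limit $j\to\infty$ using local Lipschitzness of $g$ away from $\by$ to bound the $v_k^{(j)*}$, then to $k\to\infty$---is precisely the content of those minor changes and is carried out correctly.
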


\begin{proof}
(i) Let $(y^*,x^*)\in\gph\overline{D}{}^{*>}_gF(\bx,\by)$, $\|y^*\|=1$, and $\rho>0$.
Choose a sequence $(x_k,y_k,x^*_k,y^*_k,v^*_k,\al_k)$ corresponding to $(y^*,x^*)$ in accordance with definition~\eqref{D*1}.
Then, for a sufficiently large $k$, it holds $\|x_k-\bx\|<\rho$, $\|y_k-\by\|<\rho$, $(x_k,y_k)\in\gph F$, $x_k\notin F\iv(\by)$, $y_k^*\in\sd g(y_k)+\rho\B^*$, $x_k^*\in D^*F(x_k,y_k)(y_k^*)$, and $\|x_k^*-x^*\|<\rho$.
Hence, by \eqref{5f02} and \eqref{5f03}, $\overline{|\sd{F}|}{}_g(\bar{x},\by) \le\|x_k^*\|<\|x^*\|+\rho$, and consequently
$$
\overline{|\sd{F}|}{}_g(\bar{x},\by) \le\inf\limits_ {\substack{x^*\in\overline{D}{}^{*>}_gF(\bx,\by) (\Sp^*_{Y^*})}} \|x^*\|.
$$

Conversely, by definitions \eqref{5f02} and \eqref{5f03}, there exist sequences $(x_k,y_k)\to(\bx,\by)$ with $(x_k,y_k)\in\gph F$, $x_k\notin F\iv(\by)$ and $(x_k^*,y_k^*,v_k^*)\in X^*\times Y^*\times Y^*$ with $(y_k^*,x_k^*)\in\gph{D}{}^{*}F(x_k,y_k)$,  $v^*_k\in\sd g(y_k)$, such that $y^*_k-v^*_k\to0$ and $\|x_k^*\|\to\overline{|\sd{F}|}{}_g(\bar{x},\by)$.
Without loss of generality, $x_k^*\to x^*\in X^*$ and either $y_k^*\ne0$ for all $k\in\N$, or $y_k^*=0$ for all $k\in\N$. In the first case, we can assume that $y_k^*/\|y_k^*\|\to y^*\in\Sp_{Y^*}^*$, and consequently, by definition \eqref{D*1}, $(x^*,y^*)\in\gph\overline{D}{}^{*>}_gF(\bx,\by)$.
In the second case, $(x^*,y^*)\in\gph\overline{D}{}^{*>}_gF(\bx,\by)$ for any $y^*\in Y^*$.
Hence,
$$
\overline{|\sd{F}|}{}_g(\bar{x},\by)=\|x^*\| \ge\inf\limits_ {\substack{x^*\in\overline{D}{}^{*>}_gF(\bx,\by) (\Sp^*_{Y^*})}} \|x^*\|.
$$
This proves assertion (i).
With minor changes, the above proof is applicable to  assertion (ii).
\qed\end{proof}

\begin{remark}
The above definitions of the limiting $g$-coderivatives follow the original idea of limiting coderivatives; cf. \cite{Mor06.1}.
In particular, they define positively homogeneous \SVM s with not necessarily convex graphs.
However, there are also several important distinctions.
Firstly, similar to the corresponding definition introduced in \cite{IofOut08}, these are ``outer'' objects: only sequences $(x_k,y_k)\in\gph F$ with $x_k$ components lying outside of the set $F\iv(\by)$ are taken into consideration.
Secondly, as it is reflected in their names, each of the limiting outer $g$-coderivatives depends on properties of the function $g$, more specifically on properties of its Fr\'echet subdifferentials near $\by$.
It is not excluded in any of the definitions that $\|v_k^*\|\to\infty$ and consequently $\|y_k^*\|\to\infty$, and nevertheless the sequence $(y_k^*)$ produces a finite element $y^*\in Y$.
\end{remark}

\begin{remark}\label{r23}
The definitions of the limiting $g$-coderivatives can be simplified if one imposes an additional requirement on $g$, namely that $\|v^*\|\ge\al$ for some $\al>0$ and all $v^*\in\sd g(y)$ when $y\in Y$ is sufficiently close to $\by$.
Then the case $y_k^*=0$ $(\forall k\in\N)$ can be dropped.
\end{remark}

\begin{remark}
Analyzing the definitions of the limiting $g$-coderivatives and the proof of Proposition~\ref{ows}, one can notice that there is no need to care much about the convergence of the sequences in $Y^*$.
The limiting $g$-coderivatives in Proposition~\ref{ows} can be replaced by the corresponding limiting sets in $X^*$ only.
For example, instead of the limiting outer $g$-coderivative defined by \eqref{D*1}, one can consider the following simplified set:
\begin{align}\notag
{S}{}^{*>}_gF (\bx,\by):=& \{x^*\in X^*\mid
\exists (x_k,y_k,x^*_k,y^*_k,v^*_k)\subset X\times Y\times X^*\times Y^*\times Y^*
\\\notag
&\mbox{such that}\;
(x_k,y_k)\in\gph F,\;x_k\notin F\iv(\by),\;
(y_k^*,x_k^*)\in\gph{D}{}^{*}F(x_k,y_k),\; \\\notag
&v^*_k\in\sd g(y_k),\;
(x_k,y_k)\to(\bx,\by),\;y^*_k-v^*_k\to0,\; x^*_k\to x^*\}.
\end{align}
Proposition~\ref{ows} (i) remains true if $\overline{D}{}^{*>}_gF(\bx,\by)(\Sp^*_{Y^*})$ there is replaced by ${S}{}^{*>}_gF(\bx,\by)$.
This way, one can also relax the assumption that $\dim Y<\infty$.
\end{remark}

\begin{remark}\label{Rem9}
One can define also $g$-coderivative (indirect) counterparts of the modified strict subdifferential $g$-slopes \eqref{5mf03} and \eqref{5mf03a}.
It is sufficient to add to the list of properties in definitions \eqref{D*1} and \eqref{D*3} an additional requirement that $g(y_k)/\|x_k-\bx\|\to0$ as $k\to\infty$.
The corresponding sets can be used for characterizing metric $g$-sub\-regularity.
However, the analogues of the equalities in Proposition~\ref{ows} would not hold for them.
\end{remark}

\subsection{Criteria of metric $g$-subregularity}
The next theorem is a consequence of Theorem~\ref{3T1}.

\begin{theorem}\label{5T1}
\begin{enumerate}
\item
$\sr_g[F](\bx,\by)\le
\overline{|\nabla{F}|}{}^{\diamond}_g(\bar{x},\by)$;
\item
if $X$ and $Y$ are complete and $\gph F$ is locally closed
near $(\bar{x},\by)$,
then
$\sr_g[F](\bx,\by)=
\overline{|\nabla{F}|}{}^{\diamond}_g(\bar{x},\by)$.
\end{enumerate}
\end{theorem}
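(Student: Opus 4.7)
The plan is to deduce Theorem~\ref{5T1} directly from Theorem~\ref{3T1} applied to the extended-real-valued function $f:X\times Y\to\R_\infty$ defined by \eqref{gF2}. As already noted in the paragraph preceding the theorem, the assumptions (P1$'$) and (P2$'$) on $g$, together with the identification $S(f)=F\iv(\by)$, imply that $f$ satisfies $f(\bx,\by)=0$ and properties (P1)--(P3). So Theorem~\ref{3T1} is applicable in principle, and the task reduces to matching the four constants appearing on the two sides of the equivalence: $\Er f(\bx,\by) \leftrightarrow \sr_g[F](\bx,\by)$ and $\overline{|\nabla{f}|}{}^{\diamond}(\bx,\by) \leftrightarrow \overline{|\nabla{F}|}{}^{\diamond}_g(\bx,\by)$.

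The identification of the error bound moduli is the easier of the two. Starting from \eqref{rr2}, one observes that $f(x,y)>0$ forces either $(x,y)\notin\gph F$ (in which case $f(x,y)=+\infty$ contributes $+\infty$ to the liminf) or $(x,y)\in\gph F$ with $y\ne\by$. Points with $x\in F\iv(\by)$ yield $d(x,S(f))=0$ and again contribute $+\infty$. The remaining points -- $(x,y)\in\gph F$, $x\notin F\iv(\by)$, $y\in F(x)$ -- give $f(x,y)=g(y)$ and $d(x,S(f))=d(x,F\iv(\by))$, so that \eqref{rr2} reduces exactly to \eqref{5CMR-g}.

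For the slopes, one first checks pointwise that $|\nabla{f}|{}^{\diamond}_{\rho}(x,y)=|\nabla{F}|{}^{\diamond}_{g,\rho}(x,y)$ whenever $(x,y)\in\gph F$, which holds because the admissible test points $(u,v)$ outside $\gph F$ satisfy $f_+(u,v)=+\infty$ and contribute $0$ to the supremum in \eqref{nls-f}. The delicate point is that the outer liminf defining $\overline{|\nabla{f}|}{}^{\diamond}(\bx,\by)$ ranges over $(x,y)$ with $0<f(x,y)<\rho$, whereas \eqref{5uss} restricts to $x\notin F\iv(\by)$ and $d(y,\by)<\rho$. The additional points admitted by the first definition are precisely those with $(x,y)\in\gph F$, $0<g(y)<\rho$ and $\by\in F(x)$; for such points, choosing $(u,v)=(x,\by)$ in \eqref{nls-f} gives $|\nabla{f}|{}^{\diamond}_{\rho}(x,y)\ge g(y)/(\rho d(y,\by))$, and (P2$'$) forces this lower bound to blow up as $\rho\downarrow 0$. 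Hence these extra points do not affect the iterated limit, and continuity of $g$ at $\by$ (together with (P2$'$)) lets one convert the constraint $0<g(y)<\rho$ into $d(y,\by)<\rho'$ with $\rho'\to 0$, yielding $\overline{|\nabla{f}|}{}^{\diamond}(\bx,\by)=\overline{|\nabla{F}|}{}^{\diamond}_g(\bx,\by)$. This slope comparison is the main technical step I anticipate.

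Given these two identifications, part (i) is just Theorem~\ref{3T1}(i). For part (ii) the remaining verification is that $f_+=f$ is lower semicontinuous near $(\bx,\by)$: this is immediate from the continuity of $g$ at $\by$ and its local Lipschitz continuity on $Y\setminus\{\by\}$, combined with the local closedness of $\gph F$ (which makes the indicator $i_{\gph F}$ lower semicontinuous near $(\bx,\by)$). Theorem~\ref{3T1}(ii) then delivers the equality.
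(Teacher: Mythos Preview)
Your proposal is correct and follows the same route as the paper, which merely asserts that the theorem is a consequence of Theorem~\ref{3T1} without spelling out the identifications. Your detailed verification that $\Er f(\bx,\by)=\sr_g[F](\bx,\by)$ and $\overline{|\nabla f|}{}^{\diamond}(\bx,\by)=\overline{|\nabla F|}{}^{\diamond}_g(\bx,\by)$ --- in particular the argument that points with $x\in F^{-1}(\by)$ contribute a slope at least $g(y)/(\rho\,d(y,\by))$ and hence do not lower the iterated limit thanks to (P2$'$) --- fills in precisely what the paper leaves implicit in its remark following \eqref{5uss} about the continuity of $g$.
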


The next two corollaries summarize quantitative and qualitative criteria of metric $g$-sub\-regu\-larity, respectively.

\begin{corollary}[Quantitative criteria]\label{5C1.1}
Let $\ga>0$.
Consider the following conditions:
\renewcommand {\theenumi} {\alph{enumi}}
\begin{enumerate}
\item
$F$ is metrically $g$-subregular at $(\bx,\by)$ with some $\tau>0$;
\item
$\overline{|\nabla{F}|}{}_g^{\diamond}(\bar{x},\by)>\ga$,\\ i.e., for some $\rho>0$ and any $(x,y)\in\gph F$ with $x\notin F\iv(\by)$, $d(x,\bx)<\rho$, and $d(y,\by)<\rho$, it holds $|\nabla{F}|_{g,\rho}^{\diamond}(x,y)>\ga$, and consequently there is a $(u,v)\in\gph F$, such that
\begin{gather}\label{5ree}
g(y)-g(v)>\ga d_\rho((u,v),(x,y));
\end{gather}
\item
$\ds\liminf_{\substack{x\to\bx\\x\notin F\iv(\by),\,y\in F(x)}} \frac{g(y)}{d(x,\bx)}>\ga$;
\item
$\overline{|\nabla{F}|}{}_g(\bar{x},\by)>\ga$,\\ i.e., for some $\rho>0$ and any $(x,y)\in\gph F$ with $x\notin F\iv(\by)$, $d(x,\bx)<\rho$, and $d(y,\by)<\rho$, it holds $|\nabla{F}|_{g,\rho}(x,y)>\ga$, and consequently, for any $\eps>0$, there is a $(u,v)\in\gph F\cap B_\eps(x,y)$, such that \eqref{5ree} holds true;
\item
$\overline{|\nabla{F}|}{}^+_g(\bar{x},\by)>\ga$,\\ i.e., for some $\rho>0$ and any $(x,y)\in X\times Y$ with $x\notin F\iv(\by)$, $d(x,\bx)<\rho$, $d(y,\by)<\rho$, and $g(y)/d(x,\bx)\le\ga$, it holds $|\nabla{F}|_{g,\rho}(x,y)>\ga$ and consequently, for any $\eps>0$, there is a $(u,v)\in\gph F\cap B_\eps(x,y)$, such that \eqref{5ree} holds true;
\item
$X$ and $Y$ are normed spaces and $\overline{|\sd{F}|}{}_g^a(\bar{x},\by)>\ga$,\\ i.e.,
for some $\rho>0$ and any $(x,y)\in\gph F$ with $x\notin F\iv(\by)$, $\|x-\bx\|<\rho$, and $\|y-\by\|<\rho$, it holds $|\sd{F}|^a_{g,\rho}(x,y)>\ga$, and consequently there exists an $\eps>0$, such that
\begin{gather}\label{5f}
\|x^*\|>\ga\quad\mbox{for all } x^*\in D^*F(x,y)(\sd g(B_\eps(y))+\rho\B^*);
\end{gather}
\item
$X$ and $Y$ are normed spaces and $\overline{|\sd{F}|}{}_g^{a+}(\bar{x},\by)>\ga$,\\ i.e., for some $\rho>0$ and any $(x,y)\in X\times Y$ with $x\notin F\iv(\by)$, $\|x-\bx\|<\rho$, $\|y-\by\|<\rho$, and $g(y)/\|x-\bx\|\le\ga$, it holds $|\sd{F}|_{g,\rho}^a(x,y)>\ga$ and consequently, there exists an $\eps>0$, such that
\eqref{5f} holds true;
\item
$X$ and $Y$ are normed spaces and $\overline{|\sd{F}|}{}_g(\bar{x},\by)>\ga$,\\ i.e.,
for some $\rho>0$ and any $(x,y)\in\gph F$ with $x\notin F\iv(\by)$, $\|x-\bx\|<\rho$, and $\|y-\by\|<\rho$, it holds $|\sd{F}|_{g,\rho}(x,y)>\ga$, and consequently \begin{gather}\label{5f2}
\|x^*\|>\ga\quad\mbox{for all } x^*\in D^*F(x,y)(\sd g(y)+\rho\B^*);
\end{gather}
\sloppy
\item
$X$ and $Y$ are normed spaces and $\overline{|\sd{F}|}{}^+_g(\bar{x},\by)>\ga$,\\ i.e., for some $\rho>0$ and any $(x,y)\in X\times Y$ with $x\notin F\iv(\by)$, $\|x-\bx\|<\rho$, $\|y-\by\|<\rho$, and $g(y)/\|x-\bx\|\le\ga$, it holds $|\sd{F}|_{g,\rho}(x,y)>\ga$ and consequently, \eqref{5f2} holds true;
\item
$X$ and $Y$ are finite dimensional normed spaces and
\begin{gather*}
\|x^*\|>\ga\quad\mbox{for all } x^*\in \overline{D}{}^{*>a}_gF(\bx,\by) (\Sp^*_{Y^*});
\end{gather*}
\item
$X$ and $Y$ are finite dimensional normed spaces and
\begin{gather*}
\|x^*\|>\ga\quad\mbox{for all } x^*\in \overline{D}{}^{*>}_gF(\bx,\by) (\Sp^*_{Y^*}).
\end{gather*}
\end{enumerate}
\renewcommand {\theenumi} {\roman{enumi}}
The following implications hold true:
\begin{enumerate}
\item
{\rm (c) \folgt (e)},
{\rm (d) \folgt (e)},
{\rm (e) \folgt (b)},
{\rm (f) \folgt (g) \folgt (i)}, {\rm (f)~\folgt (h)~\folgt (i)}, {\rm (j)~\folgt (k)};
\item
if $\ga<\tau$, then {\rm (a) \folgt (b)};
\item
if $\tau\le\ga$, $X$ and $Y$ are complete, and $\gph F$ is locally closed near $(\bar{x},\by)$, then
{\rm (b) \folgt (a)}.
\cnta
\end{enumerate}
Suppose $X$ and $Y$ are normed spaces.
\begin{enumerate}
\cntb
\item
{\rm (f)~\folgt (d)} and {\rm (g)~\folgt (e)},
provided that $X$ and $Y$ are Asplund and $\gph F$ is locally closed near $(\bx,\by)$;
\item
{\rm (h)~\iff (d)} and {\rm (i)~\iff (e)},\\
provided that
$g$ is Fr\'echet differentiable near $\by$ except $\by$ and one of the following conditions is satisfied:
\begin{enumerate}
\item
$X$ and $Y$ are Asplund and $\gph F$ is locally closed near $(\bar{x},\by)$;
\item
$F$ is convex;
\end{enumerate}
\item
{\rm (b)~\iff (d)~\iff (e)~\iff (h)~\iff (i)}, provided that $F$ and $g$ are convex;
\item
{\rm (f)~\iff (j)} and {\rm (h)~\iff (k)}, provided that  $\dim X<\infty$ and $\dim Y<\infty$.
\end{enumerate}
\end{corollary}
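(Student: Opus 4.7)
The plan is to derive this corollary from Theorem~\ref{5T1}, together with the slope relationships established in Propositions~\ref{5P2}, \ref{5P3}, \ref{5P6}, and Proposition~\ref{ows}. The core observation is that conditions (b)--(k) are, after unpacking the ``i.e.'' clauses, just rephrasings of inequalities of the form ``[slope]$\,>\gamma$'', and the implications among them are exactly encoded by the inequality and equality chains between the corresponding slopes.

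For parts (ii) and (iii), I would apply Theorem~\ref{5T1} to the function $f$ given by \eqref{gF2}. Metric $g$-subregularity with constant $\tau$ amounts to $\sr_g[F](\bx,\by)\ge\tau$, while condition (b) reads $\overline{|\nabla{F}|}{}_g^\diamond(\bar x,\bar y)>\gamma$. Hence (ii) follows from the inequality $\sr_g[F]\le\overline{|\nabla{F}|}{}_g^\diamond$ of Theorem~\ref{5T1}(i) applied with any $\gamma<\tau$, while (iii) follows from the equality in Theorem~\ref{5T1}(ii) under the completeness and local closedness assumptions.

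For part (i), I would dispatch each implication to the appropriate item of Propositions~\ref{5P2} and \ref{5P3}. The chain (d)$\folgt$(e)$\folgt$(b) is immediate from Proposition~\ref{5P2}(ii); (c)$\folgt$(e) uses Proposition~\ref{5P2}(i), namely the pointwise bound $|\nabla{F}|_{g,\rho}^\diamond(x,y)\ge g(y)/d_\rho((x,y),(\bx,\by))\ge g(y)/d(x,\bx)$, combined with the defining $\liminf$ in \eqref{5uss}; the four implications among the subdifferential conditions (f)--(i) are precisely the inequality chains $\overline{|\sd{F}|}{}^a_g\le\overline{|\sd{F}|}{}_g\le\overline{|\sd{F}|}{}_g^+$ and $\overline{|\sd{F}|}{}^a_g\le\overline{|\sd{F}|}{}^{a+}_g\le\overline{|\sd{F}|}{}_g^+$ recorded in Proposition~\ref{5P3}(ii); finally, (j)$\folgt$(k) follows from the inclusion $\overline{D}{}^{*>}_gF(\bx,\by)\subset\overline{D}{}^{*>a}_gF(\bx,\by)$, which in turn reflects the elementary inclusion $\sd g\subset\overline{\sd}g$ inside the definitions \eqref{D*1} and \eqref{D*3}.

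Parts (iv)--(vii) are direct applications of the remaining machinery: (iv) invokes Proposition~\ref{5P6}(i) in the Asplund/closed-graph setting; (v) invokes the equalities in Proposition~\ref{5P6}(ii) under the differentiability hypothesis together with either (a) or (b) there; (vi) combines the full chain of equalities in Proposition~\ref{5P6}(iii) for the convex case; and (vii) is immediate from Proposition~\ref{ows}. The main obstacle is not a genuine mathematical difficulty but the careful bookkeeping needed to verify that each ``i.e.'' clarification in (b)--(k) really does unpack the underlying slope inequality: one must manipulate the $\lim_{\rho\downarrow0}\inf$ in definitions such as \eqref{5ss}--\eqref{5uss} and \eqref{5f03}--\eqref{5mf03a}, match them against an explicit two-parameter quantifier statement involving $\rho$ (and $\eps$), and, in the nonlocal-slope cases, produce an explicit witness $(u,v)\in\gph F$ realizing \eqref{5ree} out of the $\limsup$/supremum in \eqref{5ls}--\eqref{5nls}.
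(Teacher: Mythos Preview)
Your proposal is correct and matches the paper's approach: the corollary is stated without proof in the paper, being an immediate consequence of Theorem~\ref{5T1} combined with the slope comparison Propositions~\ref{5P2}, \ref{5P3}, \ref{5P6}, and~\ref{ows}, exactly as you outline. One small slip: for (c)~$\Rightarrow$~(e) you need not go through Proposition~\ref{5P2}(i) and the nonlocal slope; the implication is immediate from definition~\eqref{5mss}, since $g(y)/d(x,\bx)$ already appears under the $\max$ there.
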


The conclusions of Corollary~\ref{5C1.1} are illustrated in
Fig.~\ref{fig.5}.

\begin{figure}[!htb]
$$\xymatrix{
&&&&{\rm (c)}\ar[d]
\\
&&{\rm (d)}\ar[rr]
\ar @{} [drr] |{\substack{X,Y\,{\rm Asplund}\\\gph F\, {\rm closed}}}
&&{\rm (e)}\ar[rr]
\ar@/_/@{-->}[ll]_{F,g\,{\rm convex}}
\ar@/^1pc/@{-->}[dd]
&&{\rm (b)} \ar@/_/@{-->}[rr]_{\substack{\tau\le\ga\\X,Y\, {\rm complete}\\\gph F\, {\rm closed}}}
\ar@/_/@{-->}[ll]_{F,g\,{\rm convex}}
&&{\rm (a)} \ar@/_/@{-->}[ll]_{\ga<\tau}
\\
{\rm (j)}\ar@{-->}[rr]\ar[d]
\ar @{} [drr] |{\substack{\dim X<\infty\\\dim Y<\infty}}
&&{\rm (f)}\ar[rr]
\ar@{-->}[u]
\ar[d]
\ar@{-->}[ll]
&&{\rm (g)}
\ar@{-->}[u]
\ar[d]
\\
{\rm (k)}\ar@{-->}[rr]
&&{\rm (h)}\ar[rr]\ar@{-->}[ll]
&&{\rm (i)}
\ar@/_1pc/@{-->}[uu]_{\substack{[g\,{\rm differentiable}\smallskip\\(X,Y\, {\rm Asplund}\\\gph F\, {\rm closed})\\{\rm or}\\ F\,{\rm convex}]\\{\rm or}\smallskip\\ F,g\,{\rm convex}}}
\ar@/^/@{-->}[ll]^{F,g\,{\rm convex}}
}$$
\caption{Corollary~\ref{5C1.1} \label{fig.5}}
\end{figure}

\begin{remark}
The existence of a $\ga>0$ such that one of the conditions (j) or (k) in Corollary~\ref{5C1.1} holds true is equivalent to the kernel of the corresponding limiting outer $g$-co\-derivative being equal to $\{0\}$, which is a traditional type of a qualitative coderivative regularity condition.
Conditions (j) and (k), on the other hand, provide additionally quantitative estimates of the regularity modulus.
\end{remark}

\begin{corollary}[Qualitative criteria]\label{5C1.2}
Suppose $X$ and $Y$ are complete metric spaces and $\gph F$ is locally closed near $(\bar{x},\by)$.
Then,
$F$ is metrically $g$-subregular at $(\bx,\by)$ if one of the following conditions holds true:
\renewcommand {\theenumi} {\alph{enumi}}
\begin{enumerate}
\item
$\overline{|\nabla{F}|}{}^{\diamond}_g(\bar{x},\by)>0$; \item
$\ds\liminf_{\substack{x\to\bx\\x\notin F\iv(\by),\,y\in F(x)}} \frac{g(y)}{d(x,\bx)}>0$;
\item
$\overline{|\nabla{F}|}{}_g(\bar{x},\by)>0$, or equivalently,
$\ds\lim_{\rho\downarrow0}
\inf_{\substack{d(x,\bx)<\rho,\,d(y,\by)<\rho\\
(x,y)\in\gph F,\,x\notin F\iv(\by)}}\,
|\nabla{F}|_{g,\rho}(x,y)>0;$
\item
$\overline{|\nabla{F}|}{}^+_g(\bar{x},\by)>0$, or equivalently,
$\ds\lim_{\rho\downarrow0}
\inf_{\substack{d(x,\bx)<\rho,\,\frac{g(y)}{d(x,\bx)}<\rho\\
(x,y)\in\gph F,\,x\notin F\iv(\by)}}\,
|\nabla{F}|_{g,\rho}(x,y)>0.$
\cnta
\end{enumerate}
If $X$ and $Y$ are Asplund spaces, then the following conditions are also sufficient:
\begin{enumerate}
\cntb
\item
$\overline{|\sd{F}|}{}^a_g(\bar{x},\by)>0$, or equivalently,
$\ds\lim_{\rho\downarrow0}
\inf_{\substack{\|x-\bx\|<\rho,\,\|y-\by\|<\rho\\
(x,y)\in\gph F,\,x\notin F\iv(\by)}}\,
|\sd{F}|_{g,\rho}^a(x,y)>0;$
\item
$\overline{|\sd{F}|}{}^{a+}_g(\bar{x},\by)>0$, or equivalently,
$\ds\lim_{\rho\downarrow0}
\inf_{\substack{\|x-\bx\|<\rho,\,\frac{g(y)}{\|x-\bx\|}<\rho\\
(x,y)\in\gph F,\,x\notin F\iv(\by)}}\,
|\sd{F}|_{g,\rho}^a(x,y)>0.$
\cnta
\end{enumerate}
If $X$ and $Y$ are Banach spaces, then the next two conditions:
\begin{enumerate}
\cntb
\item
$\overline{|\sd{F}|}{}_g(\bar{x},\by)>0$, or equivalently,
$
\ds\lim_{\rho\downarrow0}
\inf_{\substack{\|x-\bx\|<\rho,\,\|y-\by\|<\rho\\
(x,y)\in\gph F,\,x\notin F\iv(\by)}}\,
|\sd{F}|_{g,\rho}(x,y)>0,
$
\item
$\overline{|\sd{F}|}{}^+_g(\bar{x},\by)>0$, or equivalently,
$\ds\lim_{\rho\downarrow0}
\inf_{\substack{\|x-\bx\|<\rho,\,\frac{g(y)}{\|x-\bx\|}<\rho\\
(x,y)\in\gph F,\,x\notin F\iv(\by)}}\,
|\sd{F}|_{g,\rho}(x,y)>0,$
\cnta
\end{enumerate}
are sufficient, provided that one of the following conditions is satisfied:
\begin{itemize}
\item
$X$ and $Y$ are Asplund spaces and $g$ is Fr\'echet differentiable near $\by$ except $\by$,
\item
$F$ is convex and $g$ is either convex or Fr\'echet differentiable near $\by$ except $\by$.
\end{itemize}
If $X$ and $Y$ are finite dimensional normed spaces, then the following conditions are also sufficient:
\begin{enumerate}
\cntb
\item
$0\notin \overline{D}{}^{*>a}_gF(\bx,\by) (\Sp^*_{Y^*})$;
\item
$0\notin \overline{D}{}^{*>}_gF(\bx,\by) (\Sp^*_{Y^*})$,
provided that $F$ is convex, and $g$ is either convex or Fr\'echet differentiable near $\by$ except $\by$.
\end{enumerate}
\renewcommand {\theenumi} {\roman{enumi}}
Moreover,
\begin{enumerate}
\item
condition {\rm (a)} is also necessary for the metric $g$-subregularity of $F$ at $(\bx,\by)$;
\item
{\rm (b) \folgt (d)},
{\rm (c) \folgt (d)},
{\rm (d) \folgt (a)},
{\rm (e) \folgt (f) \folgt (h)},
{\rm (e) \folgt (g) \folgt (h)},
{\rm (i) \folgt (j)}.
\cnta
\end{enumerate}
Suppose $X$ and $Y$ are Banach spaces.
\begin{enumerate}
\cntb
\item
If $X$ and $Y$ are Asplund, then {\rm (e) \folgt (c)} and {\rm (f) \folgt (d)};
\item
if $g$ is Fr\'echet differentiable near $\by$ except $\by$ and either $X$ and $Y$ are Asplund or $F$ is convex, then {\rm (e) \iff (c)} and {\rm (f) \iff (d)};
\item
if $F$ and $g$ are convex, then {\rm (a) \iff (c) \iff (d) \iff \rm (g) \iff (h)};
\item
if $X$ and $Y$ are finite dimensional normed spaces, then
{\rm (e) \iff (i)} and
{\rm (g) \iff (j)}.
\end{enumerate}
\end{corollary}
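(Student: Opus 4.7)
The plan is to derive Corollary~\ref{5C1.2} from the quantitative Corollary~\ref{5C1.1} and Theorem~\ref{5T1} by observing that each qualitative hypothesis ``slope $>0$'' appearing in (a)--(j) is logically equivalent to the existence of some $\gamma>0$ for which the matching quantitative hypothesis ``slope $>\gamma$'' in Corollary~\ref{5C1.1} holds. The standing completeness of $X,Y$ and local closedness of $\gph F$ enable us to invoke the full Theorem~\ref{5T1}(ii), namely $\sr_g[F](\bx,\by)=\overline{|\nabla{F}|}{}^{\diamond}_g(\bar{x},\by)$, as well as implication (iii) of Corollary~\ref{5C1.1}.

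First, I would handle the sufficiency of (a) together with the necessity in part (i) in one stroke: under the standing hypotheses Theorem~\ref{5T1}(ii) makes (a) equivalent to $\sr_g[F](\bx,\by)>0$, and the latter is, by definition of $\sr_g[F](\bx,\by)$, equivalent to the existence of some $\tau>0$ for which \eqref{5MR2} holds on a neighbourhood of $\bx$.

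Next, for each of (b)--(j) the sufficiency argument is uniform: extract a $\gamma>0$ strictly below the strict slope in question, then invoke the corresponding implication of Corollary~\ref{5C1.1} with that $\gamma$ combined with implication (iii) of the same corollary (applicable since $X,Y$ are complete and $\gph F$ is locally closed). For the coderivative conditions (i) and (j), the extracted $\gamma$ yields $\|x^*\|>\gamma$ on the image of $\Sp^*_{Y^*}$ under $\overline{D}{}^{*>a}_gF(\bx,\by)$ or $\overline{D}{}^{*>}_gF(\bx,\by)$ respectively, and Proposition~\ref{ows} translates this into strict positivity of $\overline{|\sd{F}|}{}^{a}_g(\bx,\by)$ or $\overline{|\sd{F}|}{}_g(\bx,\by)$, after which Corollary~\ref{5C1.1}(f) or (h) applies. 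The Asplund/Banach/convex side hypotheses attached to (e)--(h) and (j) are exactly those under which the relevant implications or equivalences in Corollary~\ref{5C1.1} (parts (iv)--(vi)) are valid, so no extra work is needed here.

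The implications collected in parts (ii)--(vi) are read off directly from the one-way comparisons and equalities of slopes already recorded in Propositions~\ref{5P2}, \ref{5P3} and \ref{5P6}, together with Proposition~\ref{ows} for the finite-dimensional equivalences in (vi); positivity is preserved by each inequality between slopes. The main obstacle, though purely bookkeeping, is the verification of the ``or equivalently'' reformulations in (c), (d), (f), (h): one must check that strict positivity of the $\lim_{\rho\downarrow 0}\inf$ of a $\max\{|\nabla{F}|_{g,\rho}(x,y),\,g(y)/d(x,\bx)\}$-type expression over the unrestricted region, as in definitions \eqref{5mss}, \eqref{5mssds}, \eqref{5mf03}, \eqref{5mf03a}, is equivalent to strict positivity of the same $\liminf$ of the first argument alone but over the restricted region $\{g(y)/d(x,\bx)<\rho\}$. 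This is an elementary manipulation of the $\max$, but requires aligning the outer $\rho$ of the limit with the inner $\rho$ used as constraint threshold.
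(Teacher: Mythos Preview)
Your proposal is correct and matches the paper's approach: Corollary~\ref{5C1.2} is stated in the paper without proof, as the evident qualitative specialisation of Corollary~\ref{5C1.1} via Theorem~\ref{5T1} and the slope comparisons in Propositions~\ref{5P2}, \ref{5P3}, \ref{5P6} and \ref{ows}. Your identification of the ``or equivalently'' reformulations in (d), (f), (h) as the only nontrivial bookkeeping step is accurate.
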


The conclusions of Corollary~\ref{5C1.2} are illustrated in
Fig.~\ref{fig.6}.

\begin{figure}[!htb]
$$\xymatrix@C=1cm{
&&*+[F]{\sr_g[F](\bx,\by)>0} \ar[d]
\\
&*+[F]{\ds\liminf_{\substack{x\to\bx\\x\notin F\iv(\by)\\ y\in F(x)}} \frac{g(y)}{d(x,\bx)}>0}\ar[dr]
&*+[F]{\overline{|\nabla{F}|}{}^{\diamond}_g(\bar{x},\by)>0}
\ar[u]
\ar@/^/@{-->}[d]^{F,g\,{\rm convex}}
\\
&*+[F]{\overline{|\nabla{F}|}_g(\bar{x},\by)>0}
\ar[r]
\ar@/^/@{-->}[d]
\ar@{} [dr] |{\substack{X,Y\,{\rm Banach},\,g\,{\rm differentiable}\smallskip\\X,Y\,{\rm Asplund}\,{\rm or}\,F\,{\rm convex}}}
&*+[F]{\overline{|\nabla{F}|}{}^{+}_g(\bar{x},\by)>0}
\ar[u]
\ar@/_/@{-->}[d]
\ar@/_/@{-->}[l]_{F,g\,{\rm convex}}
\\
*+[F]{0\notin \overline{D}{}^{*>a}_gF(\bx,\by) (\Sp^*_{Y^*})}
\ar[d]
\ar@{-->}[r]
\ar@{} [dr] |{\qquad\substack{\dim X<\infty\\\dim Y<\infty}} &*+[F]{\overline{|\sd{F}|}{}^a_g(\bar{x},\by)>0}
\ar[r]
\ar@{-->}[l]
\ar@/^/@{-->}[u]^{X,Y\,{\rm Asplund}}
\ar[d]
&*+[F]{\overline{|\sd{F}|}{}^{a+}_g(\bar{x},\by)>0}
\ar@/_/@{-->}[u]_{X,Y\,{\rm Asplund}}
\ar[d]
\\
*+[F]{0\notin \overline{D}{}^{*>}_gF(\bx,\by) (\Sp^*_{Y^*})}
\ar@{-->}[r]
&*+[F]{\overline{|\sd{F}|}_g(\bar{x},\by)>0}
\ar[r]
\ar@{-->}[l]
&*+[F]{\overline{|\sd{F}|}{}^{+}_g(\bar{x},\by)>0}
\ar@/_4.5pc/@{-->}[uu]|{\substack{X,Y\,{\rm Banach}\\F,g\,{\rm convex}}}
\ar@/^/@{-->}[l]^{F,g\,{\rm convex}}
}$$
\caption{Corollary~\ref{5C1.2}}\label{fig.6}
\end{figure}

\section{Metric $\varphi$-subregularity}\label{S50}

\subsection{Definition}
In this section, for a set-valued mapping $F:X\rightrightarrows Y$, we consider the property of metric $\varphi$-subregu\-larity being a realization of the property of metric $g$-subregularity in the case when $g$ has a special structure:
\begin{gather}\label{gphi}
g(y)=\varphi(d(y,\by)),\quad y\in Y,
\end{gather}
where $\varphi:\R_+\to\R_+$ is continuously differentiable, with (possibly infinite) $\varphi'(0)$ understood as the right-hand derivative, and satisfies the following properties:
\begin{itemize}
\item [($\Phi1$)]
$\varphi(0)=0$,
\item [($\Phi2$)]
$\varphi'(t)>0$ for all $t\in\R_+$.
\end{itemize}

Thanks to ($\Phi2$), $\varphi$ is an increasing function.
Hence, $\varphi(t)>0$ for all $t>0$.
Obviously, function $g$ defined by \eqref{gphi} is continuous at $\by$, locally Lipschitz continuous on $Y\setminus\{\by\}$ and satisfies $g(\by)=0$ and properties (P1$'$) and (P2$'$).

\begin{remark}\label{R5.1}
The requirement of continuous differentiability of $\varphi$ and property $(\Phi2)$ can be weakened.
For many estimates, it is sufficient to assume that
$\varphi$ is differentiable on $(0,\de)$ for some $\de>0$ and
$\liminf_{t\downarrow0}\varphi'(t)>0$.
\end{remark}

We say that $F$ is metrically \emph{$\varphi$-subregu\-lar} at $(\bx,\by)$ with constant $\tau>0$ iff there exists a neighbourhood $U$ of $\bx$, such that
\begin{equation}\label{MR3}
\tau d(x,F^{-1}(\by))\le \varphi(d(y,\by))\quad \mbox{for all } x\in U,\; y\in F(x),
\end{equation}
or, taking into account the monotonicity of $\varphi$,
\begin{equation}\label{MR4}
\tau d(x,F^{-1}(\by))\le \varphi(d(\by,F(x)))\quad \mbox{for all } x\in U.
\end{equation}

Metric $\varphi$-subregularity can be equivalently characterized using the following constant being the realization of \eqref{5CMR-g}:
\begin{gather}\label{6CMR-g}
\sr_\varphi[F](\bx,\by):=
\liminf_{\substack{x\to\bx\\x\notin F\iv(\by)}} \frac{\varphi(d(\by,F(x)))}{d(x,F^{-1}(\by))}.
\end{gather}

If $\varphi$ is the identity function, i.e., $\varphi(t)=t$ for all $t\in\R_+$, then \eqref{MR4} (and \eqref{MR3}) reduces to the standard definition of metric subregularity; cf. \cite{Kru15}.
Model \eqref{MR4} covers also more general, nonlinear regularity properties.
For instance, if $\varphi(t)=t^q$, $t\in\R_+$, with $0<q\le1$, then \eqref{MR4} turns into the definition of H\"older metric subregularity; cf. \cite{Kru15.2}.

\subsection{Primal space and subdifferential slopes}

Given a $\rho>0$ and $(x,y)\in\gph F$, as the main primal space local tool, in this section we are going to use the \emph{$\rho$-slope} of $F$ at $(x,y)$:
\begin{gather}\label{srho}
|\nabla{F}|_{\rho}(x,y):=
\limsup_{\substack{(u,v)\to(x,y),\,
(u,v)\ne(x,y)\\
(u,v)\in\gph F}}
\frac{[d(y,\by)-d(v,\by)]_+} {d_\rho((u,v),(x,y))},
\end{gather}
while the definition
\begin{gather}\label{6nls}
|\nabla{F}|_{\varphi,\rho}^{\diamond}(x,y):=
\sup_{\substack{(u,v)\ne(x,y)\\(u,v)\in\gph F}}
\frac{\varphi(d(y,\by))-\varphi(d(v,\by))}{d_\rho((u,v),(x,y))},
\end{gather}
of the \emph{nonlocal $(\varphi,\rho)$-slope} of $F$ at $(x,y)$ involves $\varphi$ and is the realization of the nonlocal $g$-slope \eqref{5nls} for the case when $g$ is given by \eqref{gphi}.

It is easy to notice, that the $\rho$-slope \eqref{srho} is the realization of the $g$-slope \eqref{5ls} for the case when $g(y)=d(y,\by)$, $y\in Y$.
When $g$ is given by \eqref{gphi}, the $g$-slope \eqref{5ls} still has a simple representation in terms of \eqref{srho}.

\begin{proposition}\label{6P0}
Suppose $(x,y)\in\gph F$, $y\ne\by$, $\rho>0$, and $g$ is given by \eqref{gphi}.
Then,
$$|\nabla{F}|_{g,\rho}(x,y)
=\varphi'(d(y,\by))\;|\nabla{F}|_{\rho}(x,y).$$
\end{proposition}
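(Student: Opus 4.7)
The plan is to derive the equality by applying the mean value theorem to $\varphi$ inside the limsup and then factoring out a continuously converging multiplier.

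First, I would note that since $y\ne\by$, we have $t_0:=d(y,\by)>0$, and by continuity of the distance function, $d(v,\by)\to t_0$ as $v\to y$. Since $\varphi$ is continuously differentiable on $\R_+$ with $\varphi'(t_0)>0$ by $(\Phi2)$, the mean value theorem gives, for every $v\ne\by$ near $y$, some $\xi(v)$ between $d(y,\by)$ and $d(v,\by)$ such that
\begin{equation*}
\varphi(d(y,\by))-\varphi(d(v,\by)) = \varphi'(\xi(v))\bigl(d(y,\by)-d(v,\by)\bigr).
\end{equation*}
Since $\varphi'$ is strictly positive on $\R_+$ (so $\varphi$ is strictly increasing), both sides have the same sign, and we obtain
\begin{equation*}
[\varphi(d(y,\by))-\varphi(d(v,\by))]_+ = \varphi'(\xi(v))\,[d(y,\by)-d(v,\by)]_+.
\end{equation*}

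Next, I would divide by $d_\rho((u,v),(x,y))$ and take the limsup as $(u,v)\to(x,y)$ with $(u,v)\in\gph F$, $(u,v)\ne(x,y)$. As $(u,v)\to(x,y)$, we have $v\to y$, hence $\xi(v)\to t_0=d(y,\by)$, and by continuity of $\varphi'$, $\varphi'(\xi(v))\to\varphi'(d(y,\by))$. Since $\varphi'(d(y,\by))$ is a positive finite constant, it factors out of the limsup via the elementary fact that $\limsup a_n b_n = a \cdot \limsup b_n$ whenever $a_n\to a\in(0,\infty)$ and $b_n\ge 0$. This yields
\begin{equation*}
|\nabla F|_{g,\rho}(x,y) = \varphi'(d(y,\by))\,|\nabla F|_\rho(x,y),
\end{equation*}
which is the claimed equality (interpreting $0\cdot\infty$ or $\infty$ cases in the standard way; the positivity and finiteness of $\varphi'(t_0)$ means no ambiguity actually arises here).

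The only subtlety is the limsup manipulation in the final step: one must verify that replacing the variable factor $\varphi'(\xi(v))$ by its limit $\varphi'(d(y,\by))$ does not distort the limsup. I expect this to be the main (very mild) obstacle, and it is resolved by sandwiching $\varphi'(\xi(v))$ between $\varphi'(d(y,\by))\pm\eps$ on a sufficiently small neighbourhood of $y$, for arbitrary $\eps>0$, and then letting $\eps\downarrow 0$. No other difficulty arises since the restriction $y\ne\by$ keeps us away from the potentially singular point $t=0$ where $\varphi'$ is allowed to be infinite.
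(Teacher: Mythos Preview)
Your proof is correct and follows essentially the same approach as the paper's: the paper writes $\varphi(d(y,\by))-\varphi(d(v,\by))=\varphi'(d(y,\by))(d(y,\by)-d(v,\by))+o(d(v,y))$ via a first-order expansion, while you achieve the same linearization through the mean value theorem with the variable factor $\varphi'(\xi(v))\to\varphi'(d(y,\by))$. Both then factor the positive finite constant $\varphi'(d(y,\by))$ out of the $\limsup$ using continuity of $\varphi'$ and property $(\Phi2)$; your sandwich justification for this step is in fact more explicit than the paper's.
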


\begin{proof}
By \eqref{5ls}, the differentiability of $\varphi$, ($\Phi2$), and \eqref{srho},
\begin{align*}
|\nabla{F}|_{g,\rho}(x,y)
&=\limsup_{\substack{(u,v)\to(x,y),\,
(u,v)\ne(x,y)\\(u,v)\in\gph F}}
\frac{[\varphi(d(y,\by))-\varphi(d(v,\by))]_+} {d_\rho((u,v),(x,y))}
\\
&=\limsup_{\substack{(u,v)\to(x,y),\,
(u,v)\ne(x,y)\\(u,v)\in\gph F}}
\frac{[\varphi'(d(y,\by))(d(y,\by)-d(v,\by))+o(d(v,y))]_+} {d_\rho((u,v),(x,y))}
\\
&=\varphi'(d(y,\by)) \limsup_{\substack{(u,v)\to(x,y),\,
(u,v)\ne(x,y)\\(u,v)\in\gph F}}
\frac{[d(y,\by)-d(v,\by)]_+} {d_\rho((u,v),(x,y))}
=\varphi'(d(y,\by))\;|\nabla{F}|_{\rho}(x,y).
\end{align*}
In the above formula, $o(\cdot)$ stands for a function from $\R_+$ to $\R_+$ with the property $o(t)/t\to0$ as $t\downarrow0$.
\qed\end{proof}

Thanks to Proposition~\ref{6P0}, the strict $g$-slopes \eqref{5ss}--\eqref{5uss} produce the following definitions:
\begin{gather}\label{phi-ss}
\overline{|\nabla{F}|}{}_\varphi(\bar{x},\by):=
\lim_{\rho\downarrow0}
\inf_{\substack{d(x,\bx)<\rho,\,d(y,\by)<\rho\\
(x,y)\in\gph F,\,x\notin F\iv(\by)}}\,
\varphi'(d(y,\by))\;|\nabla{F}|_{\rho}(x,y),
\\\label{mphi-ss}
\overline{|\nabla{F}|}{}_\varphi^{+}(\bar{x},\by):=
\lim_{\rho\downarrow0}
\inf_{\substack{d(x,\bx)<\rho,\,d(y,\by)<\rho\\
(x,y)\in\gph F,\,x\notin F\iv(\by)}}
\max\left\{\varphi'(d(y,\by))|\nabla{F}|_{\rho}(x,y), \frac{\varphi(d(y,\by))}{d(x,\bx)}\right\},
\\\label{6uss}
\overline{|\nabla{F}|}{}^{\diamond}_\varphi(\bar{x},\by):=
\lim_{\rho\downarrow0}
\inf_{\substack{d(x,\bx)<\rho,\,d(y,\by)<\rho\\
(x,y)\in\gph F,\,x\notin F\iv(\by)}}\,
|\nabla{F}|{}^{\diamond}_{\varphi,\rho}(x,y).
\end{gather}
We call the above constants, respectively,
the \emph{strict $\varphi$-slope}, \emph{modified strict $\varphi$-slope}, and \emph{uniform strict $\varphi$-slope} of $F$ at $(\bx,\by)$.

If $X$ and $Y$ are normed spaces, we define the \emph{subdifferential $\rho$-slope} and \emph{approximate subdifferential $\rho$-slope} ($\rho>0$) of $F$ at $(x,y)\in\gph F$ with $y\ne\by$ as follows:
\begin{gather}\label{6srs}
|\sd{F}|_{\rho}(x,y)
:=\inf_{\substack{x^*\in D^*F(x,y)(J(y-\by)+\rho\B^*)}}
\|x^*\|,
\\\label{6asrs}
|\sd{F}|^a_{\rho}(x,y)
:=\liminf_{\substack{v\to y-\by}}\
\inf_{\substack{x^*\in D^*F(x,y)(J(v)+\rho\B^*)}}
\|x^*\|.
\end{gather}
$J$ in the above formulas stands for the duality mapping \eqref{J}.

Similar to \eqref{srho}, constants \eqref{6srs} and \eqref{6asrs} are the realizations of the $(g,\rho)$-slopes \eqref{5f02} and \eqref{5f02a}, respectively, in the case $g(y)=\|y-\by\|$.
They do not depend on $\varphi$.
Using some simple calculus, one can formulate representations for $(g,\rho)$-slopes \eqref{5f02} and \eqref{5f02a} in the case when $g$ is given by \eqref{gphi}.
In the next proposition and the rest of the article, we use the notation
$$\xi_\varphi(y):=(\varphi'(\|y-\by\|))\iv.$$

\begin{proposition}\label{6P1}
Suppose $X$ and $Y$ are normed spaces, $(x,y)\in\gph F$, $y\ne\by$, $\rho>0$, and $g$ is given by \eqref{gphi}.
The following representations hold:
\begin{enumerate}
\item
$\sd g(y)=\varphi'(\|y-\by\|)J(y-\by)$;
\item
$|\sd{F}|_{g,\rho}(x,y) =\varphi'(\|y-\by\|)\;|\sd{F}|_{\xi_\varphi(y)\rho}(x,y)$;
\item
$|\sd{F}|^a_{g,\rho}(x,y) =\varphi'(\|y-\by\|)\;|\sd{F}|^a_{\xi_\varphi(y)\rho}(x,y)$.
\end{enumerate}
\end{proposition}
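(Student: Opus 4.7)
I would prove the three claims in sequence, with each building on the previous.

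For (i), write $g = \varphi \circ h$ with $h(y) := \|y - \bar y\|$. The function $h$ is convex and $1$-Lipschitz, so at $y \ne \bar y$ its Fr\'echet subdifferential coincides with the convex subdifferential, which is the duality mapping: $\partial h(y) = J(y - \bar y)$. Since $\varphi$ is continuously differentiable at $h(y) > 0$, the first-order expansion $\varphi(h(y+u)) - \varphi(h(y)) = \varphi'(h(y))(h(y+u) - h(y)) + o(|h(y+u) - h(y)|)$ combined with $|h(y+u) - h(y)| \le \|u\|$ turns the error into $o(\|u\|)$. Feeding this into definition \eqref{Frsd} yields $\partial g(y) = \varphi'(\|y - \bar y\|)\partial h(y) = \varphi'(\|y-\bar y\|)J(y - \bar y)$.

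For (ii), use (i) to write $\partial g(y) + \rho \B^* = \varphi'(\|y-\bar y\|)\bigl(J(y-\bar y) + \xi_\varphi(y)\rho \B^*\bigr)$, where the positive scalar $\varphi'(\|y-\bar y\|)$ is factored out of the ball via $\rho = \varphi'(\|y-\bar y\|)\xi_\varphi(y)\rho$. The graph of the Fr\'echet coderivative $D^*F(x,y)$ is a cone, so the coderivative is positively homogeneous: $D^*F(x,y)(\alpha A) = \alpha D^*F(x,y)(A)$ for $\alpha > 0$ and $A \subseteq Y^*$. Applying this with $\alpha = \varphi'(\|y - \bar y\|)$ and $A = J(y-\bar y) + \xi_\varphi(y)\rho \B^*$ and then taking the infimum of $\|x^*\|$ yields (ii), after dividing out $\varphi'(\|y-\bar y\|)$.

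For (iii), apply (ii) pointwise at each $y'$ sufficiently close to $y$ (necessarily $y' \ne \bar y$ since $y \ne \bar y$), obtaining
\[
\inf_{x^* \in D^*F(x,y)(\partial g(y')+\rho \B^*)} \|x^*\| = \varphi'(\|y'-\bar y\|)\inf_{x^* \in D^*F(x,y)(J(y'-\bar y)+\xi_\varphi(y')\rho \B^*)} \|x^*\|.
\]
Take $\liminf_{y'\to y}$ on both sides. On the left this is $|\partial F|^a_{g,\rho}(x,y)$. On the right, since $\varphi'(\|y'-\bar y\|) \to \varphi'(\|y-\bar y\|) > 0$ continuously, the scalar factor pulls out of the liminf, reducing the problem to showing
\[
\liminf_{y' \to y} \inf_{x^* \in D^*F(x,y)(J(y'-\bar y)+\xi_\varphi(y')\rho \B^*)} \|x^*\| = |\partial F|^a_{\xi_\varphi(y)\rho}(x,y).
\]

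The main obstacle is replacing the $y'$-dependent radius $\xi_\varphi(y')\rho$ by the fixed value $\xi_\varphi(y)\rho$. I would resolve this with a sandwich argument: for any $\varepsilon > 0$ and $y'$ close enough to $y$, one has $\xi_\varphi(y')\rho \in [\xi_\varphi(y)\rho - \varepsilon,\, \xi_\varphi(y)\rho + \varepsilon]$, so the monotonicity of the inner infimum as a nonincreasing function of the ball radius pinches it between its values at $\xi_\varphi(y)\rho \pm \varepsilon$. Taking $\liminf_{y'\to y}$ gives
\[
|\partial F|^a_{\xi_\varphi(y)\rho + \varepsilon}(x,y) \le \liminf_{y'\to y}(\cdots) \le |\partial F|^a_{\xi_\varphi(y)\rho - \varepsilon}(x,y),
\]
and letting $\varepsilon \downarrow 0$ collapses both bounds to $|\partial F|^a_{\xi_\varphi(y)\rho}(x,y)$ via the right-continuity of $\rho \mapsto |\partial F|^a_\rho(x,y)$, an elementary consequence of its presentation as a liminf of a family of nonincreasing, right-continuous-in-$\rho$ functions.
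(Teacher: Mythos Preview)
Your proofs of (i) and (ii) coincide with the paper's: it cites a composition rule for Fr\'echet subdifferentials for (i) and carries out exactly the same positive-homogeneity computation for (ii). For (iii) the paper writes only ``Similarly, substituting (i) into \eqref{5f02a}, we obtain (iii)'', so your treatment is in fact more careful than the original: you correctly observe that substituting $\partial g(y')=\varphi'(\|y'-\bar y\|)J(y'-\bar y)$ into \eqref{5f02a} produces a $y'$-dependent ball radius $\xi_\varphi(y')\rho$, which must then be reconciled with the fixed radius $\xi_\varphi(y)\rho$ in the definition of $|\partial F|^a_{\xi_\varphi(y)\rho}(x,y)$.

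Your sandwich argument is the right idea, but the closing step has a small slip. The upper bound $|\partial F|^a_{\xi_\varphi(y)\rho-\varepsilon}(x,y)$ involves radii approaching $\xi_\varphi(y)\rho$ from \emph{below} as $\varepsilon\downarrow0$, so collapsing it to $|\partial F|^a_{\xi_\varphi(y)\rho}(x,y)$ requires \emph{left}-continuity of $\rho\mapsto|\partial F|^a_\rho(x,y)$, not the right-continuity you invoke. For a nonincreasing function this is not automatic, and with closed balls $\rho\B^*$ in \eqref{6asrs} the inner infimum need not be left-continuous in $\rho$. The paper's ``similarly'' does not address this point either, so you have already gone further than the source; to close the gap cleanly you would need either a direct sequential argument avoiding the squeeze, or to note that only the strict slopes \eqref{phi-asss}--\eqref{phi-masss} (where $\rho\downarrow0$ anyway) are used downstream.
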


\begin{proof}
(i) follows from the composition rule for Fr\'echet subdifferentials (see, e.g., \cite[Corollary 1.14.1]{Kru03.1}).

(ii) Substituting (i) into \eqref{5f02} and taking into account that $D^*F(x,y)(tv^*)=tD^*F(x,y)(v^*)$ for any $v^*\in Y^*$ and $t>0$, we obtain:
\begin{align*}
|\sd{F}|_{g,\rho}(x,y)
&=\inf_{\substack{x^*\in D^*F(x,y)(\varphi'(\|y-\by\|)J(y-\by)+\rho\B^*)}}
\|x^*\|
\\
&=\varphi'(\|y-\by\|)\inf_{\substack{x^*\in D^*F(x,y)(J(y-\by)+\xi_\varphi(y)\rho\B^*)}}
\|x^*\|
=\varphi'(\|y-\by\|)\;|\sd{F}|_{\xi_\varphi(y)\rho}(x,y).
\end{align*}
Similarly, substituting (i) into \eqref{5f02a}, we obtain (iii).
\qed\end{proof}


Now we can define the \emph{strict subdifferential $\varphi$-slope}, \emph{approximate strict subdifferential $\varphi$-slope}, \emph{modified strict subdifferential $\varphi$-slope}, and \emph{modified approximate strict subdifferential $\varphi$-slope} of $F$ at $(\bx,\by)$:
\begin{gather}\label{phi-sss}
\overline{|\sd{F}|}{}_\varphi(\bar{x},\by):=
\lim_{\rho\downarrow0}
\inf_{\substack{\|x-\bx\|<\rho,\,\|y-\by\|<\rho\\
(x,y)\in\gph F,\,x\notin F\iv(\by)}}\,
\varphi'(\|y-\by\|)\;|\sd{F}|_{\xi_\varphi(y)\rho}(x,y),
\\\label{phi-asss}
\overline{|\sd{F}|}{}^a_\varphi(\bar{x},\by):=
\lim_{\rho\downarrow0}
\inf_{\substack{\|x-\bx\|<\rho,\,\|y-\by\|<\rho\\
(x,y)\in\gph F,\,x\notin F\iv(\by)}}\,
\varphi'(\|y-\by\|)\;|\sd{F}|^a_{\xi_\varphi(y)\rho}(x,y),
\\\label{phi-msss}
\overline{|\sd{F}|}{}_\varphi^{+}(\bar{x},\by):=
\lim_{\rho\downarrow0}
\inf_{\substack{\|x-\bx\|<\rho,\,\|y-\by\|<\rho\\
(x,y)\in\gph F,\,
x\notin F\iv(\by)}}
\max\left\{\varphi'(\|y-\by\|)|\sd{F}|_{\xi_\varphi(y)\rho}(x,y), \frac{\varphi(\|y-\bar y\|}{\|x-\bx\|}\right\},
\\\label{phi-masss}
\overline{|\sd{F}|}{}_\varphi^{a+}(\bar{x},\by):=
\lim_{\rho\downarrow0}
\inf_{\substack{\|x-\bx\|<\rho,\,\|y-\by\|<\rho\\
(x,y)\in\gph F,\,
x\notin F\iv(\by)}}
\max\left\{\varphi'(\|y-\by\|)|\sd{F}|^a_{\xi_\varphi(y)\rho}(x,y), \frac{\varphi(\|y-\bar y\|}{\|x-\bx\|}\right\}.
\end{gather}
In view of Proposition~\ref{6P1}, these constants coincide, respectively, with the corresponding strict subdifferential $g$-slopes \eqref{5f03}, \eqref{5f03a}, \eqref{5mf03}, and \eqref{5mf03a}.
Factor $\xi_\varphi(y)$ in \eqref{phi-sss}--\eqref{phi-masss} cannot be dropped in general.

\begin{proposition}\label{6P2}
Suppose $X$ and $Y$ are normed spaces.
The following assertions hold true:
\begin{enumerate}
\item
$\ds\overline{|\sd{F}|}{}_\varphi(\bar{x},\by)\ge
\lim_{\rho\downarrow0}
\inf_{\substack{\|x-\bx\|<\rho,\,\|y-\by\|<\rho\\
(x,y)\in\gph F,\,x\notin F\iv(\by)}}\,
\varphi'(\|y-\by\|)\;|\sd{F}|_{\rho}(x,y)$;
\item
$\ds\overline{|\sd{F}|}{}^a_\varphi(\bar{x},\by)\ge
\lim_{\rho\downarrow0}
\inf_{\substack{\|x-\bx\|<\rho,\,\|y-\by\|<\rho\\
(x,y)\in\gph F,\,x\notin F\iv(\by)}}\,
\varphi'(\|y-\by\|)\;|\sd{F}|^a_{\rho}(x,y)$;
\item
$\ds\overline{|\sd{F}|}{}_\varphi^{+}(\bar{x},\by)\ge
\lim_{\rho\downarrow0}
\inf_{\substack{\|x-\bx\|<\rho,\,\|y-\by\|<\rho\\
(x,y)\in\gph F,\,
x\notin F\iv(\by)}}
\max\left\{\varphi'(\|y-\by\|)|\sd{F}|_{\rho}(x,y), \frac{\varphi(\|y-\bar y\|)}{\|x-\bx\|}\right\}$;
\item
$\ds\overline{|\sd{F}|}{}_\varphi^{a+}(\bar{x},\by)\ge
\lim_{\rho\downarrow0}
\inf_{\substack{\|x-\bx\|<\rho,\,\|y-\by\|<\rho\\
(x,y)\in\gph F,\,
x\notin F\iv(\by)}}
\max\left\{\varphi'(\|y-\by\|)|\sd{F}|^a_{\rho}(x,y), \frac{\varphi(\|y-\bar y\|)}{\|x-\bx\|}\right\}$.
\end{enumerate}
If $\varphi'(0)<\infty$, then the above relations hold as equalities.
\end{proposition}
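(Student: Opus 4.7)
The argument rests on the monotonicity of $|\sd{F}|_\tau(x,y)$ and $|\sd{F}|^a_\tau(x,y)$ in $\tau$, combined with two-sided bounds on $\varphi'$ near $0$. For each fixed $(x,y)\in\gph F$ with $y\ne\by$, the set $D^*F(x,y)(J(\cdot)+\tau\B^*)$ over which the infimum in \eqref{6srs}--\eqref{6asrs} is taken enlarges with $\tau$, so both $|\sd{F}|_\tau(x,y)$ and $|\sd{F}|^a_\tau(x,y)$ are non-increasing in $\tau$. The continuity of $\varphi'$ on $\R_+$ together with $\varphi'(0)>0$ produces a positive lower bound $c>0$ on $\varphi'(\|y-\by\|)$ on some neighbourhood of $\by$, so $\xi_\varphi(y)\le 1/c$ there; if moreover $\varphi'(0)<\infty$, continuity also gives a finite upper bound $C<\infty$ on the same neighbourhood, so $\xi_\varphi(y)\ge 1/C$. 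Finally, the infima inside the constants in the statement are non-decreasing as $\rho\downarrow0$ (shrinking admissible set plus monotonicity in $\tau$), so each $\lim_{\rho\downarrow0}$ is actually a supremum in $\rho$.

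\textbf{The four inequalities.} To prove (i), fix $\mu$ strictly less than the right-hand side and choose $\rho_1>0$ small enough that the lower bound $c$ on $\varphi'(\|y-\by\|)$ is valid on the $\rho_1$-admissible set and that $\varphi'(\|y-\by\|)|\sd{F}|_{\rho_1}(x,y)>\mu$ there. Set $\rho:=\min\{\rho_1,\,c\rho_1\}$. For every admissible $(x,y)$ in the $\rho$-neighbourhood of $(\bx,\by)$, one has $\xi_\varphi(y)\rho\le\rho/c\le\rho_1$, so monotonicity yields $|\sd{F}|_{\xi_\varphi(y)\rho}(x,y)\ge|\sd{F}|_{\rho_1}(x,y)$ and hence $\varphi'(\|y-\by\|)|\sd{F}|_{\xi_\varphi(y)\rho}(x,y)>\mu$. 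Taking the infimum over $(x,y)$ and the supremum over $\rho$ gives $\overline{|\sd{F}|}_\varphi(\bx,\by)\ge\mu$; letting $\mu$ approach the right-hand side completes (i). Assertion (ii) follows by repeating the same argument with $|\sd{F}|^a_\tau$ replacing $|\sd{F}|_\tau$. For (iii) and (iv) the monotonicity estimate is applied to the first argument of the $\max$; since the second argument $\varphi(\|y-\by\|)/\|x-\bx\|$ does not depend on the slope parameter, the pointwise inequality $\max(a_1,b)\ge\max(a_2,b)$ (whenever $a_1\ge a_2$) propagates the bound through the $\max$ and then through the infimum.

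\textbf{Equality when $\varphi'(0)<\infty$.} The opposite inequalities follow from the same scheme with the roles of the two slope parameters interchanged and with the upper bound $C$ replacing $c$. Given $\mu$ strictly less than the left-hand side, one selects $\rho_0$ with $\varphi'(\|y-\by\|)|\sd{F}|_{\xi_\varphi(y)\rho_0}(x,y)>\mu$ on its admissible set and with $\varphi'\le C$ holding there, sets $\rho:=\min\{\rho_0,\,\rho_0/C\}$, and uses $\rho\le\xi_\varphi(y)\rho_0$ together with monotonicity to transfer the estimate to $\varphi'(\|y-\by\|)|\sd{F}|_\rho(x,y)$; the max-type estimates are handled as in the previous paragraph. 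The only real difficulty in the whole proof is bookkeeping: one must shrink the reference parameter carefully so that the uniform bounds on $\varphi'$, the admissibility constraints, and the comparison $\xi_\varphi(y)\rho\le\rho_1$ (or $\rho\le\xi_\varphi(y)\rho_0$) all hold simultaneously on one fixed neighbourhood of $(\bx,\by)$.
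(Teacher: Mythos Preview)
Your proof is correct and follows essentially the same approach as the paper's: both arguments exploit a positive lower bound (and, when $\varphi'(0)<\infty$, a finite upper bound) on $\varphi'$ near $0$ to compare the slope parameters $\xi_\varphi(y)\rho$ and $\rho$, then pass to a smaller reference radius to transfer the estimate. The only stylistic difference is that you package the comparison as monotonicity of $\tau\mapsto|\sd F|_\tau(x,y)$, whereas the paper unpacks the definition to an individual witness $(x^*,y^*,v^*)$; these are equivalent formulations of the same step.
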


\begin{proof}
We consider the first inequality.
The others can be treated in the same way.
If $\overline{|\sd{F}|}{}_\varphi(\bar{x},\by)=\infty$, the inequality holds trivially.
Let $\overline{|\sd{F}|}{}_\varphi(\bar{x},\by)<\ga<\infty$.
Fix an arbitrary $\rho>0$ and choose an $\al>0$ and a $\rho'\in(0,\rho)$, such that $\varphi'(t)>\al$ for all $t\in[0,\rho')$ and $\rho'<\al\rho$.
By \eqref{phi-sss}, there exists an $(x,y)\in\gph F$ with $\|x-\bx\|<\rho'$, $\|y-\by\|<\rho'$ and $x\notin F\iv(\by)$; a $y^*\in Y^*$, an $x^*\in D^*F(x,y)(y^*)$, and a $v^*\in J(y-\by)$, such that $\|v^*-y^*\|\le(\varphi'(\|y-\by\|))\iv\rho'$ and $\varphi'(\|y-\by\|)\|x^*\|<\ga$.
Hence, $\|x-\bx\|<\rho$, $\|y-\by\|<\rho$, and $\|v^*-y^*\|\le\al\iv\rho'<\rho$, and consequently the \RHS\ of (i) is less than $\ga$.
The conclusion follows since $\ga$ was chosen arbitrarily.

Let $\varphi'(0)<\infty$.
To prove the opposite inequality, we proceed in the same way starting with the \RHS\ of (i).
If it is infinite, the opposite inequality holds trivially.
Suppose that the \RHS\ of (i) is less than some positive number $\ga$.
Fix an arbitrary $\rho>0$ and choose an $\al>0$ and a $\rho'\in(0,\rho)$, such that $\varphi'(t)<\al$ for all $t\in[0,\rho')$ (in view of continuity of $\varphi'$) and $\rho'<\al\iv\rho$.
For this $\rho'$, there exists an $(x,y)\in\gph F$ with $\|x-\bx\|<\rho'$, $\|y-\by\|<\rho'$ and $x\notin F\iv(\by)$; a $y^*\in Y^*$, an $x^*\in D^*F(x,y)(y^*)$, and a $v^*\in J(y-\by)$, such that $\|v^*-y^*\|\le\rho'$ and $\varphi'(\|y-\by\|)\|x^*\|<\ga$.
Hence, $\|x-\bx\|<\rho$, $\|y-\by\|<\rho$, and $\|v^*-y^*\|<\al\iv\rho<(\varphi'(\|y-\by\|))\iv\rho$, and consequently, by \eqref{phi-sss}, $\overline{|\sd{F}|}{}_\varphi(\bar{x},\by)<\ga$.
The conclusion follows since $\ga$ was chosen arbitrarily.
\qed\end{proof}

The next statement summarizes the relationships between the $\varphi$-slopes.
It is a consequence of Propositions~\ref{5P2},  \ref{5P3}, \ref{5P6}, and \ref{6P1}.

\begin{proposition}[Relationships between slopes]\label{6P3}
\begin{enumerate}
\item
$\ds|\nabla{F}|_{\varphi,\rho}^{\diamond}(x,y)\ge
\max\left\{\varphi'(d(y,\by))\;|\nabla{F}|_{\rho}(x,y), \frac{\varphi(d(y,\by))} {d_\rho((x,y),(\bx,\by))}\right\}$
for all $\rho>0$ and $(x,y)\in\gph F$ with $y\ne\by$;
\item
$\ds\overline{|\nabla{F}|}{}_\varphi^{\diamond}(\bar{x},\by)\ge
\overline{|\nabla{F}|}{}^+_{\varphi}(\bx,\by)\ge
\overline{|\nabla{F}|}{}_{\varphi}(\bx,\by)$.
\cnta
\end{enumerate}
Suppose $X$ and $Y$ are normed spaces.
\begin{enumerate}
\cntb
\item
$|\sd{F}|^a_{\rho}(x,y) \le|\sd{F}|_{\rho}(x,y)$ for all $\rho>0$ and $(x,y)\in\gph F$;
\item
$\overline{|\sd{F}|}{}^a_\varphi(\bar{x},\by) \le\overline{|\sd{F}|}{}_\varphi(\bar{x},\by)\le \overline{|\sd{F}|}{}^+_\varphi(\bar{x},\by)$ and\\
$\overline{|\sd{F}|}{}^a_\varphi(\bar{x},\by) \le\overline{|\sd{F}|}{}^{a+}_\varphi(\bar{x},\by) \le\overline{|\sd{F}|}{}^+_\varphi(\bar{x},\by)$;
\item
$\overline{|\nabla{F}|}{}_\varphi(\bar{x},\by)
\ge
\overline{|\sd{F}|}{}^{a}_\varphi(\bar{x},\by)$ and $\overline{|\nabla{F}|}{}^+_\varphi(\bar{x},\by)
\ge
\overline{|\sd{F}|}{}^{a+}_\varphi(\bar{x},\by)$,\\
provided that $X$ and $Y$ are Asplund and $\gph F$ is locally closed near $(\bar{x},\by)$;
\item
$\ds\overline{|\nabla{F}|}{}_\varphi(\bar{x},\by) =\overline{|\sd{F}|}{}_\varphi(\bar{x},\by)$ and $\ds\overline{|\nabla{F}|}{}^+_\varphi(\bar{x},\by) =\overline{|\sd{F}|}{}^+_\varphi(\bar{x},\by)$,\\
provided that $Y$ is Fr\'echet smooth and one of the following conditions is satisfied:
\begin{enumerate}
\item
$X$ is Asplund and $\gph F$ is locally closed near $(\bar{x},\by)$;
\item
$F$ is convex;
if both $F$ and $\varphi$ are convex, then {\rm (i)} and {\rm (ii)} also hold as equalities.
\end{enumerate}
\end{enumerate}
\end{proposition}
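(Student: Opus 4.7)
The entire statement is a translation of Propositions~\ref{5P2}, \ref{5P3}, and \ref{5P6} to the special case $g(y)=\varphi(d(y,\by))$ studied here. The bridge has already been built: Proposition~\ref{6P0} gives the identity $|\nabla{F}|_{g,\rho}(x,y)=\varphi'(d(y,\by))\,|\nabla{F}|_\rho(x,y)$, while Proposition~\ref{6P1} gives $\sd g(y)=\varphi'(\|y-\by\|)J(y-\by)$ together with the corresponding identities for the subdifferential $(g,\rho)$-slopes. Combined with the obvious equality $|\nabla{F}|^{\diamond}_{g,\rho}(x,y)=|\nabla{F}|^{\diamond}_{\varphi,\rho}(x,y)$ read off from \eqref{5nls} and \eqref{6nls}, these identifications show that the strict $\varphi$-slopes \eqref{phi-ss}--\eqref{6uss} and the strict subdifferential $\varphi$-slopes \eqref{phi-sss}--\eqref{phi-masss} are literally the realizations of their $g$-coun\-ter\-parts \eqref{5ss}--\eqref{5mf03a} for $g$ given by \eqref{gphi}; this is the content of the paragraph following \eqref{phi-masss}.

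Given these identifications, the proof reduces to invoking the cited results one at a time. Parts (i) and (ii) come directly from Proposition~\ref{5P2}(i)--(ii), noting that the second entry under the $\max$ in (i) is exactly $g(y)/d_\rho((x,y),(\bx,\by))$ with $g(y)=\varphi(d(y,\by))$. Part (iii) is immediate from the definitions \eqref{6srs}--\eqref{6asrs}, since the value at $v=y-\by$ of the family used to form $|\sd{F}|^a_\rho(x,y)$ is precisely $|\sd{F}|_\rho(x,y)$. Part (iv) is Proposition~\ref{5P3}(ii). Part (v) is Proposition~\ref{5P6}(i), and Part (vi) is Proposition~\ref{5P6}(ii); the final equality claim under convexity of both $F$ and $\varphi$ is Proposition~\ref{5P6}(iii).

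The only genuinely new verification concerns the hypothesis of Proposition~\ref{5P6}(ii), which requires $g$ to be Fr\'echet differentiable on a punctured neighbourhood of $\by$. Under the assumption that $Y$ is Fr\'echet smooth, and using the paper's convention that $Y$ is equipped with such a norm, the map $y\mapsto\|y-\by\|$ is Fr\'echet differentiable at every $y\ne\by$; composing with the continuously differentiable $\varphi$ (with $\varphi'>0$) and applying the chain rule yields the Fr\'echet differentiability of $g$ on $Y\setminus\{\by\}$. For the convex equality case, note that $g=\varphi\circ\|\cdot-\by\|$ is convex as soon as $\varphi$ is convex, since $\varphi$ is monotone by $(\Phi2)$ and the norm is convex, so Proposition~\ref{5P6}(iii) applies under the hypotheses of (vi)(b).

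The main bookkeeping obstacle is the factor $\xi_\varphi(y)=(\varphi'(\|y-\by\|))^{-1}$ appearing in \eqref{phi-sss}--\eqref{phi-masss}: it arises because the ``slack'' ball of radius $\rho$ in the subdifferential $g$-slope, after pulling out $\varphi'(\|y-\by\|)$ from $\sd g(y)$ via Proposition~\ref{6P1}(i), becomes a ball of radius $\xi_\varphi(y)\rho$ in the $J$-slope. Since $\varphi'$ is continuous with $\varphi'(0)\in(0,+\infty]$, $\xi_\varphi(\cdot)$ stays bounded away from $0$ on a neighbourhood of $\by$, so this rescaling does not affect the existence or value of the limit $\rho\downarrow0$ in \eqref{phi-sss}--\eqref{phi-masss} -- which is, in effect, already the content of Proposition~\ref{6P2}. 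Once this point is accepted, every inequality of Proposition~\ref{6P3} is an automatic consequence of its $g$-analogue.
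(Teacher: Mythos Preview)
Your proposal is correct and follows the same route as the paper, which simply records that Proposition~\ref{6P3} ``is a consequence of Propositions~\ref{5P2}, \ref{5P3}, \ref{5P6}, and \ref{6P1}''; you have merely unpacked this sentence, correctly matching each item of \ref{6P3} to its $g$-analogue via the bridges in Propositions~\ref{6P0} and \ref{6P1} and verifying that Fr\'echet smoothness of $Y$ (resp.\ convexity of $\varphi$) supplies the differentiability (resp.\ convexity) of $g$ required in Proposition~\ref{5P6}(ii)--(iii). One minor remark: your last paragraph overstates the role of $\xi_\varphi$ --- the strict subdifferential $\varphi$-slopes \eqref{phi-sss}--\eqref{phi-masss} are \emph{defined} with the factor $\xi_\varphi(y)\rho$ precisely so that, by Proposition~\ref{6P1}(ii)--(iii), they equal the corresponding $g$-slopes on the nose (as the paper notes just before Proposition~\ref{6P2}), so there is no genuine bookkeeping obstacle and Proposition~\ref{6P2} is not needed here.
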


\subsection{Limiting outer $\varphi$-coderivative}
In finite dimensions, one can define the
\emph{limiting outer $\varphi$-coderivative} of $F$ being the realization of the limiting $g$-coderivative \eqref{D*1} and a counterpart of the strict subdifferential $\varphi$-slopes \eqref{phi-sss} and \eqref{phi-asss}.
Note that, due to the assumptions imposed on $\varphi$, the definition takes a simpler form, cf. Remark~\ref{r23}.

\begin{align}\notag
\gph\overline{D}{}^{*>}_\varphi F (\bx,\by):=& \Big\{(y^*,x^*)\in Y^*\times X^*\mid
\exists (x_k,y_k,x^*_k,y^*_k,v^*_k)\subset X\times Y\times X^*\times Y^*\times Y^*\;\mbox{such that}
\\\notag
&(x_k,y_k)\in\gph F,\;x_k\notin F\iv(\by),
\;(y_k^*,x_k^*)\in\gph{D}{}^{*}F(x_k,y_k),\;
v^*_k\in J(y_k-\by),
\\\notag
&(x_k,y_k)\to(\bx,\by),\;y^*_k-\varphi'(\|y_k-\by\|)v^*_k\to0,\; \|y^*\|x^*_k\to x^*,
\\\label{D*21}
&\mbox{if}\; y^*\ne0,\; \mbox{then}\; \frac{y^*_k}{\|y_k^*\|}\to\frac{y^*}{\|y^*\|}\Big\}.
\end{align}

The above formula takes into account the representation from Proposition~\ref{6P1}(ii).
Thanks to the continuous differentiability of $\varphi$ and convexity of a norm, $\overline{\sd}g(y)=\sd g(y)$ for all $y\in Y\setminus\{0\}$.
Taking into consideration the closedness of the Fr\'echet normal cone, one can conclude that the realization of the approximate limiting outer $g$-coderivative \eqref{D*3}
also reduces to \eqref{D*21}.

\begin{remark}
One can define also a $\varphi$-coderivative counterpart of the modified strict subdifferential $\varphi$-slopes \eqref{phi-msss} and \eqref{phi-masss}; cf. Remark~\ref{Rem9}.
\end{remark}

The next proposition is a consequence of Proposition~\ref{ows}.

\begin{proposition}\label{ows2}
Suppose $X$ and $Y$ are finite dimensional normed linear spaces.
Then,
$$\overline{|\sd{F}|}{}_\varphi(\bar{x},\by) =\overline{|\sd{F}|}{}^{a}_\varphi(\bar{x},\by) =\inf\limits_ {\substack{x^*\in\overline{D}{}^{*>}_\varphi F(\bx,\by) (\Sp^*_{Y^*})}} \|x^*\|.$$
\end{proposition}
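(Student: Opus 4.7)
The plan is to derive the statement from Proposition~\ref{ows} applied to the function $g$ defined by \eqref{gphi}, together with the identifications recorded in Proposition~\ref{6P1}. I would proceed in three stages.

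First, I would reduce the $\varphi$-slopes to the corresponding $g$-slopes. By Proposition~\ref{6P1}(ii)--(iii), the quantities under the infimum in \eqref{phi-sss} and \eqref{phi-asss} are, term by term, equal to $|\sd F|_{g,\rho}(x,y)$ and $|\sd F|^a_{g,\rho}(x,y)$ respectively, so
\begin{equation*}
\overline{|\sd{F}|}{}_\varphi(\bar{x},\by)=\overline{|\sd{F}|}{}_g(\bar{x},\by),\qquad \overline{|\sd{F}|}{}^{a}_\varphi(\bar{x},\by)=\overline{|\sd{F}|}{}^{a}_g(\bar{x},\by).
\end{equation*}
Since $\dim X,\dim Y<\infty$, Proposition~\ref{ows} then gives
\begin{equation*}
\overline{|\sd{F}|}{}_\varphi(\bar{x},\by)=\inf_{x^{*}\in\overline{D}{}^{*>}_gF(\bx,\by)(\Sp^{*}_{Y^{*}})}\|x^{*}\|, \qquad \overline{|\sd{F}|}{}^{a}_\varphi(\bar{x},\by)=\inf_{x^{*}\in\overline{D}{}^{*>a}_gF(\bx,\by)(\Sp^{*}_{Y^{*}})}\|x^{*}\|.
\end{equation*}

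Second, I would show that for $g$ of the form \eqref{gphi} the two limiting outer $g$-coderivatives in \eqref{D*1} and \eqref{D*3} both coincide with the limiting outer $\varphi$-coderivative defined in \eqref{D*21}. Substituting the representation $\sd g(y_k)=\varphi'(\|y_k-\by\|)J(y_k-\by)$ from Proposition~\ref{6P1}(i) into \eqref{D*1} directly yields the convergence condition $y^{*}_k-\varphi'(\|y_k-\by\|)v^{*}_k\to 0$ appearing in \eqref{D*21}. The dichotomy ``$y_k^{*}\ne0$ for all $k$ or $y_k^{*}=0$ for all $k$'' in \eqref{D*1} can be resolved to the former alternative along the lines of Remark~\ref{r23}: since $\varphi'$ is continuous on $\R_+$ with $\varphi'(t)>0$ everywhere, $\varphi'(\|y_k-\by\|)$ stays bounded below by some $c>0$ on a neighbourhood of $\by$ (the case $\varphi'(0)=+\infty$ only makes the bound easier); combined with $\|v^{*}_k\|=1$, this forces $\|y^{*}_k\|\ge c/2$ eventually, so $y_k^{*}=0$ for all $k$ is impossible whenever the sequence is nontrivial.

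Third, to identify $\overline{D}{}^{*>a}_gF(\bx,\by)$ with $\overline{D}{}^{*>}_gF(\bx,\by)$, I would verify that $\overline{\sd}g(y)=\sd g(y)$ for all $y\ne\by$. This uses the representation of Proposition~\ref{6P1}(i) together with the outer semicontinuity of the duality mapping $J$ (as the subdifferential of the continuous convex norm) and the continuity of $\varphi'$ away from $0$: if $y_k\to y\ne\by$ and $v^{*}_k\in\sd g(y_k)=\varphi'(\|y_k-\by\|)J(y_k-\by)$ with $v^{*}_k\to v^{*}$, then $v^{*}\in\varphi'(\|y-\by\|)J(y-\by)=\sd g(y)$. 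Consequently the ``$v^{*}_k\in\overline{\sd}g(y_k)$'' condition in \eqref{D*3} is equivalent, up to shifting the index, to ``$v^{*}_k\in\sd g(y_k)$''.

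Combining the three steps gives the claimed chain of equalities. The main technical obstacle is the second step: carefully ruling out the degenerate alternative in \eqref{D*1} in the regime where $\varphi'(0)$ may be infinite and where $y_k$ may approach $\by$ arbitrarily fast, so that the ``weight'' $\varphi'(\|y_k-\by\|)$ is used correctly to absorb the normalization and reproduce the cleaner definition~\eqref{D*21}.
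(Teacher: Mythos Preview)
Your proposal is correct and follows the paper's own approach essentially verbatim: the paper states that Proposition~\ref{ows2} is a consequence of Proposition~\ref{ows}, having already recorded (i) that the strict subdifferential $\varphi$-slopes coincide with the corresponding $g$-slopes via Proposition~\ref{6P1}, (ii) that the $y_k^*=0$ alternative in \eqref{D*1} is excluded by the lower bound on $\|v^*\|$ coming from $(\Phi2)$ (Remark~\ref{r23}), and (iii) that $\overline{\sd}g(y)=\sd g(y)$ for $y\ne\by$ by continuity of $\varphi'$ and outer semicontinuity of $J$, so that \eqref{D*1} and \eqref{D*3} both reduce to \eqref{D*21}. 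Your three steps reproduce exactly these ingredients, with slightly more detail than the paper supplies.
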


\subsection{Criteria of metric $\varphi$-subregularity}
The next theorem is a consequence of Theorem~\ref{5T1}.

\begin{theorem}\label{6T1}
\begin{enumerate}
\item
$\sr_\varphi[F](\bx,\by)\le
\overline{|\nabla{F}|}{}^{\diamond}_\varphi(\bar{x},\by)$;
\item
if $X$ and $Y$ are complete and $\gph F$ is locally closed near $(\bar{x},\by)$,
then
$\sr_\varphi[F](\bx,\by)=
\overline{|\nabla{F}|}{}^{\diamond}_\varphi(\bar{x},\by)$.
\end{enumerate}
\end{theorem}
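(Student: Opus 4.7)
The plan is to reduce Theorem~\ref{6T1} directly to Theorem~\ref{5T1} applied to the specific choice $g(y):=\varphi(d(y,\by))$, $y\in Y$. First I would verify that this $g$ satisfies the standing assumptions imposed on $g$ in Section~\ref{S3}: continuity at $\by$ follows from continuity of $\varphi$ together with $(\Phi1)$; local Lipschitz continuity on $Y\setminus\{\by\}$ is immediate from $g$ being the composition of the continuously differentiable $\varphi$ with the $1$-Lipschitz map $y\mapsto d(y,\by)$; the normalization $g(\by)=0$ is exactly $(\Phi1)$; property (P1$'$) follows from the strict monotonicity of $\varphi$ ensured by $(\Phi2)$; and (P2$'$) reduces to $\lim_{t\downarrow0}\varphi(t)/t=\varphi'(0)>0$.

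Next I would show that both sides of the desired conclusions coincide with their $g$-counterparts from Theorem~\ref{5T1}. For the modulus, monotonicity and continuity of $\varphi$ give $\inf_{y\in F(x)}\varphi(d(y,\by))=\varphi(d(\by,F(x)))$, so performing the partial infimum over $y$ inside the liminf in \eqref{5CMR-g} produces exactly \eqref{6CMR-g}, and hence $\sr_g[F](\bx,\by)=\sr_\varphi[F](\bx,\by)$. For the slope, comparing \eqref{5nls} with \eqref{6nls} the only formal difference is a positive-part truncation of the numerator. The outer infimum in \eqref{5uss} is taken over $(x,y)\in\gph F$ with $x\notin F\iv(\by)$, which forces $y\ne\by$; the admissible competitor $(u,v)=(\bx,\by)$ (valid since $(\bx,\by)\in\gph F$ and $(x,y)\ne(\bx,\by)$, because $x\notin F\iv(\by)$ while $\bx\in F\iv(\by)$) then produces the strictly positive value $\varphi(d(y,\by))/d_\rho((\bx,\by),(x,y))$ in either supremum. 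Consequently the positive-part truncation is inactive and $|\nabla{F}|_{g,\rho}^{\diamond}(x,y)=|\nabla{F}|_{\varphi,\rho}^{\diamond}(x,y)$ at every point relevant to the liminf, which yields $\overline{|\nabla{F}|}{}_\varphi^{\diamond}(\bx,\by)=\overline{|\nabla{F}|}{}_g^{\diamond}(\bx,\by)$.

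With these two identifications in place, assertions (i) and (ii) of Theorem~\ref{6T1} are immediate consequences of the corresponding parts of Theorem~\ref{5T1}: part (i) needs no extra hypotheses, while part (ii) uses completeness of $X$ and $Y$ together with local closedness of $\gph F$ near $(\bx,\by)$. The step requiring the most care is the slope identification, since a priori the positive-part truncation could strictly enlarge the supremum; the observation that $\by$ itself is an admissible value of the auxiliary point $v$ (thanks to $(\bx,\by)\in\gph F$) makes the reduction transparent and dispenses with any further analysis.
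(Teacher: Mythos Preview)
Your proposal is correct and follows exactly the reduction the paper has in mind: the paper states only that Theorem~\ref{6T1} ``is a consequence of Theorem~\ref{5T1}'' (having already noted that the modulus \eqref{6CMR-g} and the nonlocal slope \eqref{6nls} are the realizations of \eqref{5CMR-g} and \eqref{5nls} for $g$ given by \eqref{gphi}), and your argument fills in precisely those identifications. Your observation that the positive-part truncation in \eqref{5nls} is inactive at the relevant points, via the competitor $(u,v)=(\bx,\by)$, is a detail the paper leaves implicit.
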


The estimate in the next proposition can be useful when formulating necessary conditions of $\varphi$-subregu\-larity.
It incorporates the following constant characterizing the behaviour of $\varphi$ near $0$:
\begin{gather}\label{vth}
\vartheta[\varphi]:=\liminf_{t\downarrow0} \frac{t\varphi'(t)}{\varphi(t)}.
\end{gather}
It is well defined since $\varphi(t)>0$ for all $t>0$.
Obviously, $\vartheta[\varphi]\ge0$.
If $\varphi(t)=t^q$ ($t\ge0$), then $\vartheta[\varphi]=q$.

\begin{proposition}\label{iti}
Suppose $X$ and $Y$ are normed spaces and $F$ is convex near $(\bx,\by)$.
Then, $\vartheta[\varphi]\;\sr_\varphi[F](\bar{x},\by) \le\overline{|\sd{F}|}_\varphi(\bar{x},\by)$.
\end{proposition}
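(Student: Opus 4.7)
The plan is to prove this by producing the inequality $\|x^{*}\|\,d(x,F^{-1}(\by))\ge(1-\xi_\varphi(y)\rho)\|y-\by\|$ for every Fréchet normal $(x^{*},-y^{*})\in N_{\gph F}(x,y)$ at points where $y^{*}$ is close to $J(y-\by)$, and then combining it with the $\varphi$-subregularity inequality $\varphi(\|y-\by\|)\ge\tau d(x,F^{-1}(\by))$ and the definition of $\vartheta[\varphi]$. Throughout, one may assume $\vartheta[\varphi]>0$ and $\sr_\varphi[F](\bar x,\by)>0$, since otherwise the claim is trivial.

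First I would fix arbitrary $\tau\in(0,\sr_\varphi[F](\bar x,\by))$ and $\mu\in(0,\vartheta[\varphi])$, and show $\overline{|\sd F|}_\varphi(\bar x,\by)\ge \mu\tau$; passing to the supremum over $\tau,\mu$ then gives the result. By \eqref{6CMR-g} and \eqref{vth}, shrinking the neighbourhood if necessary, one can pick $\delta_{0}>0$ such that for every $x\in B_{\delta_{0}}(\bx)\setminus F^{-1}(\by)$ one has $\varphi(d(\by,F(x)))\ge\tau d(x,F^{-1}(\by))$, and such that on $(0,\delta_{0})$ the ratio $t\varphi'(t)/\varphi(t)\ge\mu$. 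I would also use continuity of $\varphi'$ near $0$ together with $\varphi'(0)>0$ to obtain a constant $M$ with $\xi_\varphi(y)\le M$ whenever $\|y-\by\|<\delta_{0}$, so that $\xi_\varphi(y)\rho\le M\rho$ uniformly in such $y$.

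Next, for any $\rho\in(0,\delta_{0})$ with $M\rho<1$, and any $(x,y)\in\gph F$ with $\|x-\bar x\|<\rho$, $\|y-\by\|<\rho$, $x\notin F^{-1}(\by)$, I would take $x^{*}\in D^{*}F(x,y)(y^{*})$ with $y^{*}=v^{*}+\xi_\varphi(y)\rho b^{*}$, $v^{*}\in J(y-\by)$, $\|b^{*}\|\le1$. Convexity of $\gph F$ near $(\bar x,\by)$ makes $N_{\gph F}(x,y)$ the convex normal cone, so for $x'\in F^{-1}(\by)$ close to $x$ one has
\[
\langle x^{*},x-x'\rangle\ge\langle y^{*},y-\by\rangle=\|y-\by\|+\xi_\varphi(y)\rho\langle b^{*},y-\by\rangle\ge(1-\xi_\varphi(y)\rho)\|y-\by\|.
\]
Combining this with $\langle x^{*},x-x'\rangle\le\|x^{*}\|\|x-x'\|$ and taking infimum over $x'\in F^{-1}(\by)$ yields $\|x^{*}\|\,d(x,F^{-1}(\by))\ge(1-\xi_\varphi(y)\rho)\|y-\by\|$. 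Using $y\in F(x)$, monotonicity of $\varphi$, and the choice of $\delta_{0}$, we have $\varphi(\|y-\by\|)\ge\varphi(d(\by,F(x)))\ge\tau d(x,F^{-1}(\by))$, hence
\[
\varphi'(\|y-\by\|)\|x^{*}\|\ge\tau(1-\xi_\varphi(y)\rho)\,\frac{\varphi'(\|y-\by\|)\|y-\by\|}{\varphi(\|y-\by\|)}\ge\tau(1-M\rho)\mu.
\]
Taking the infimum over admissible $x^{*}$ in the definition \eqref{6srs} gives $\varphi'(\|y-\by\|)|\sd F|_{\xi_\varphi(y)\rho}(x,y)\ge\tau(1-M\rho)\mu$, and then taking the infimum over $(x,y)$ and letting $\rho\downarrow0$ in \eqref{phi-sss} produces $\overline{|\sd F|}_\varphi(\bar x,\by)\ge\mu\tau$, as required.

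The main technical obstacle I anticipate is properly controlling the parameter $\xi_\varphi(y)\rho$, since it enters both the test condition $y^{*}\in J(y-\by)+\xi_\varphi(y)\rho\B^{*}$ and the estimate of $\langle y^{*},y-\by\rangle$; this is why I need the uniform bound on $\xi_\varphi$ near $\by$ coming from $\varphi'(0)>0$. A secondary small point is that $\gph F$ is only convex near $(\bar x,\by)$, but since the nearest-point $x'$ to $x$ in $F^{-1}(\by)$ can be chosen arbitrarily close to $\bar x$, the separation inequality may legitimately be applied.
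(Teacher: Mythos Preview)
Your proposal is correct and follows essentially the same approach as the paper: use convexity of $\gph F$ to turn the coderivative condition into the separation inequality $\langle x^{*},x-x'\rangle\ge(1-\xi_\varphi(y)\rho)\|y-\by\|$ for $x'\in F^{-1}(\by)$, then combine with the $\varphi$-subregularity bound $\varphi(\|y-\by\|)\ge\tau d(x,F^{-1}(\by))$ and the definition of $\vartheta[\varphi]$. The only technical difference is in disposing of the error term: you bound $\xi_\varphi(y)\le M$ uniformly (via $\varphi'(0)>0$) and let $\rho\downarrow0$ at the end, whereas the paper first multiplies through by $\varphi'(\|y-\by\|)$ (so that $\varphi'(\|y-\by\|)\xi_\varphi(y)\rho=\rho$) and then absorbs $\rho\|y-\by\|$ into a gap $\gamma_2-\gamma_1$ via the choice $\rho<(\gamma_2-\gamma_1)\varphi(t)/t$; both devices rely on $\varphi'(0)>0$ and yield the same conclusion.
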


\begin{proof}
If $\sr_\varphi[F](\bar{x},\by)=0$ or $\vartheta[\varphi]=0$, the conclusion is trivial.
Suppose $0<\tau<\sr_\varphi[F](\bar{x},\by)$ and $0<\ga_1<\ga_2<\vartheta[\varphi]$.
Then there exists a $\rho>0$, such that
\begin{gather}\label{iti1}
\tau d(x,F^{-1}(\by))<\varphi(\|y-\by\|), \quad \forall x\in B_\rho(\bx)\setminus F\iv(\by),\; y\in F(x),
\\\label{iti2}
\varphi'(\|y-\by\|)\|y-\by\|\ge\ga_2\varphi(\|y-\by\|), \quad \forall y\in B_\rho(\by).
\end{gather}
Since $\varphi'(0)>0$, we can also assume that \begin{gather}\label{iti3}
\rho<(\ga_2-\ga_1)\frac{\varphi(t)}{t}, \quad \forall t\in(0,\rho).
\end{gather}
Choose an arbitrary $(x,y)\in\gph F$ with $\|x-\bx\|<\rho$, $\|y-\by\|<\rho$, $x\notin F\iv(\by)$; $v^*\in J(y-\by)$; and $x^*\in D^*F(x,y)(v^*+\xi_\varphi(y)\rho\B^*)$ where $\xi_\varphi(y)=(\varphi'(\|y-\by\|))\iv$.
By \eqref{iti1}, one can find a point $u\in F\iv(\by)$, such that
\begin{gather}\label{iti4}
\tau\|x-u\|<\varphi(\|y-\by\|).
\end{gather}
By the convexity of $F$, the Fr\'echet normal cone to its graph coincides with the normal cone in the sense of convex analysis, and consequently it holds
\begin{gather*}
\langle x^*,u-x\rangle\le\langle v^*,\by-y\rangle +\xi_\varphi(y)\rho\|y-\by\|=-(1-\xi_\varphi(y)\rho)\|y-\by\|.
\end{gather*}
Combining this with \eqref{iti2}, \eqref{iti3}, and \eqref{iti4}, we have
\begin{align*}
\varphi'(\|y-\by\|)\|x^*\|\|u-x\| &\ge-\varphi'(\|y-\by\|)\langle x^*,u-x\rangle
\ge(\varphi'(\|y-\by\|)-\rho)\|y-\by\|
\\
&>\ga_2\varphi(\|y-\by\|) -(\ga_2-\ga_1)\varphi(\|y-\by\|)
=\ga_1\varphi(\|y-\by\|) >\ga_1\tau\|u-x\|.
\end{align*}
Hence,
\begin{align*}
\varphi'(\|y-\by\|)\|x^*\|>\ga_1\tau,
\end{align*}
and it follows from definitions \eqref{phi-sss} and \eqref{6srs} that $\overline{|\sd{F}|}_\varphi(\bar{x},\by)\ge\ga_1\tau$.
Passing to the limit in the last inequality as $\ga_1\to\vartheta[\varphi]$ and $\tau\to\sr_\varphi[F](\bar{x},\by)$, we arrive at the claimed inequality.
\qed\end{proof}

The next two corollaries summarize quantitative and qualitative criteria of metric $\varphi$-subregu\-larity.

\begin{corollary}[Quantitative criteria]\label{6C1.1}
Let $\ga>0$.
Consider the following conditions:
\renewcommand {\theenumi} {\alph{enumi}}
\begin{enumerate}
\item
$F$ is metrically $\varphi$-subregular at $(\bx,\by)$ with some $\tau>0$;
\item
$\overline{|\nabla{F}|}{}_\varphi^{\diamond}(\bar{x},\by)>\ga$,\\ i.e., for some $\rho>0$ and any $(x,y)\in\gph F$ with $x\notin F\iv(\by)$, $d(x,\bx)<\rho$, and $d(y,\by)<\rho$, it holds $|\nabla{F}|_{\varphi,\rho}^{\diamond}(x,y)>\ga$, and consequently there is a $(u,v)\in\gph F$, such that
\begin{gather*}
\varphi(d(y,\by))-\varphi(d(v,\by))>\ga d_\rho((u,v),(x,y));
\end{gather*}
\item
$\ds\liminf_{\substack{x\to\bx\\x\notin F\iv(\by),\,y\in F(x)}} \frac{\varphi(d(y,\by))}{d(x,\bx)}>\ga$;
\item
$\overline{|\nabla{F}|}{}_\varphi(\bar{x},\by)>\ga$,\\ i.e., for some $\rho>0$ and any $(x,y)\in\gph F$ with $x\notin F\iv(\by)$, $d(x,\bx)<\rho$, and $d(y,\by)<\rho$, it holds $\varphi'(d(y,\by))|\nabla{F}|_{\rho}(x,y)>\ga$, and consequently, for any $\eps>0$, there is a $(u,v)\in\gph F\cap B_\eps(x,y)$, such that
\begin{gather}\label{6phiree}
\varphi'(d(y,\by))(d(y,\by)-d(v,\by))>\ga d_\rho((u,v),(x,y));
\end{gather}
\item
$\overline{|\nabla{F}|}{}^+_\varphi(\bar{x},\by)>\ga$,\\ i.e., for some $\rho>0$ and any $(x,y)\in X\times Y$ with $x\notin F\iv(\by)$, $d(x,\bx)<\rho$, $d(y,\by)<\rho$, and $\varphi(d(y,\by))/d(x,\bx)\le\ga$, it holds $\varphi'(d(y,\by))|\nabla{F}|_{\rho}(x,y)>\ga$ and consequently, for any $\eps>0$, there is a $(u,v)\in\gph F\cap B_\eps(x,y)$, such that \eqref{6phiree} holds true;
\sloppy
\item
$X$ and $Y$ are normed spaces and
$\overline{|\sd{F}|}{}_\varphi^a(\bar{x},\by)>\ga$,\\ i.e.,
for some $\rho>0$ and any $(x,y)\in\gph F$ with $x\notin F\iv(\by)$, $\|x-\bx\|<\rho$, and $\|y-\by\|<\rho$, it holds $\varphi'(\|y-\by\|)|\sd{F}|^a_{\xi_\varphi(y)\rho}(x,y)>\ga$, and consequently there exists an $\eps>0$, such that
\begin{gather}\label{6f}
\varphi'(\|y-\by\|)\|x^*\|>\ga\;\;\mbox{for all } x^*\in D^*F(x,y)(J(B_\eps(y-\by))+\xi_\varphi(y)\rho\B^*);
\end{gather}
\item
$X$ and $Y$ are normed spaces and
$\overline{|\sd{F}|}{}_\varphi^{a+}(\bar{x},\by)>\ga$,\\ i.e., for some $\rho>0$ and any $(x,y)\in X\times Y$ with $x\notin F\iv(\by)$, $\|x-\bx\|<\rho$, $\|y-\by\|<\rho$, and $\varphi(\|y-\by\|)/\|x-\bx\|\le\ga$, it holds $\varphi'(\|y-\by\|)|\sd{F}|_{\xi_\varphi(y)\rho}^a(x,y)>\ga$ and consequently, there exists an $\eps>0$, such that
\eqref{6f} holds true;
\item
$X$ and $Y$ are normed spaces and
$\overline{|\sd{F}|}{}_\varphi(\bar{x},\by)>\ga$,\\ i.e.,
for some $\rho>0$ and any $(x,y)\in\gph F$ with $x\notin F\iv(\by)$, $\|x-\bx\|<\rho$, and $\|y-\by\|<\rho$, it holds $\varphi'(\|y-\by\|)|\sd{F}|_{\xi_\varphi(y)\rho}(x,y)>\ga$, and consequently
\begin{gather}\label{6f2}
\varphi'(\|y-\by\|)\|x^*\|>\ga\quad\mbox{for all } x^*\in D^*F(x,y)(J(y-\by)+\xi_\varphi(y)\rho\B^*);
\end{gather}
\sloppy
\item
$X$ and $Y$ are normed spaces and
$\overline{|\sd{F}|}{}^+_\varphi(\bar{x},\by)>\ga$,\\ i.e., for some $\rho>0$ and any $(x,y)\in X\times Y$ with $x\notin F\iv(\by)$, $\|x-\bx\|<\rho$, $\|y-\by\|<\rho$, and $\varphi(\|y-\by\|)/\|x-\bx\|\le\ga$, it holds $\varphi'(\|y-\by\|)|\sd{F}|_{\xi_\varphi(y)\rho}(x,y)>\ga$ and consequently,
\eqref{6f2} holds true;
\item
$X$ and $Y$ are finite dimensional normed spaces and
\begin{gather*}
\|x^*\|>\ga\quad\mbox{for all } x^*\in \overline{D}{}^{*>}_\varphi F(\bx,\by) (\Sp^*_{Y^*}).
\end{gather*}
\end{enumerate}
\renewcommand {\theenumi} {\roman{enumi}}
The following implications hold true:
\begin{enumerate}
\item
{\rm (c) \folgt (e)},
{\rm (d) \folgt (e)},
{\rm (e) \folgt (b)},
{\rm (f) \folgt (g) \folgt (i)}, {\rm (f)~\folgt (h)~\folgt (i)};
\item
if $\ga<\tau$, then {\rm (a) \folgt (b)};
\item
if $\tau\le\ga$, $X$ and $Y$ are complete, and $\gph F$ is locally closed near $(\bar{x},\by)$, then
{\rm (b) \folgt (a)}.
\cnta
\end{enumerate}
Suppose $X$ and $Y$ are normed spaces.
\begin{enumerate}
\cntb
\item
if $F$ is convex, and $\ga<\vartheta[\varphi]\tau$, then {\rm (a) \folgt (h)};
\item
{\rm (f)~\folgt (d)} and {\rm (g)~\folgt (e)},
provided that $X$ and $Y$ are Asplund and $\gph F$ is locally closed near $(\bx,\by)$;
\item
{\rm (h)~\iff (d)} and {\rm (i)~\iff (e)},\\
provided that
$Y$ is Fr\'echet smooth and one of the following conditions is satisfied:
\begin{enumerate}
\item
$X$ is Asplund and $\gph F$ is locally closed near $(\bar{x},\by)$;
\item
$F$ is convex;
\end{enumerate}
\item
{\rm (b)~\iff (d)~\iff (e)~\iff (h)~\iff (i)},
provided that
$F$ and $\varphi$ are convex;
\item
if $X$ and $Y$ are finite dimensional normed spaces, then {\rm (f)~\iff (h)~\iff (j)}.
\end{enumerate}
\end{corollary}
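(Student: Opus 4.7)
The plan is to deduce each assertion from results already established in Section~\ref{S50}: Theorem~\ref{6T1}, Proposition~\ref{6P3}, Proposition~\ref{iti}, and Proposition~\ref{ows2}. The corollary is essentially a catalogue of the estimates proved in these statements, rewritten in explicit quantifier form, so the bulk of the work consists in verifying that each displayed condition in (b)--(j) literally unfolds the corresponding strict slope inequality. I would therefore begin by preparing a short dictionary between the quantifier-based conditions and the slope constants \eqref{phi-ss}--\eqref{6uss}, \eqref{6srs}--\eqref{6asrs}, and \eqref{phi-sss}--\eqref{phi-masss}, paying careful attention to the rescaling factor $\xi_\varphi(y)$ that appears inside the ball on the dual side but is absent from the primal slopes, and to the distinction between ``there exists $(u,v)\in\gph F$'' (nonlocal, as in (b)) and ``for every $\varepsilon>0$, some $(u,v)\in\gph F\cap B_\varepsilon(x,y)$'' (local, as in (d), (e)).

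With that dictionary in hand, items (ii) and (iii) (the equivalence of (a) and (b) in the appropriate $\gamma$-ranges) follow at once from Theorem~\ref{6T1}, which identifies $\sr_\varphi[F](\bar{x},\by)$ with $\overline{|\nabla{F}|}{}^{\diamond}_\varphi(\bar{x},\by)$ under completeness and local closedness and gives the one-sided estimate in general. Item (i) of the conclusion---the chains (c)$\Rightarrow$(e), (d)$\Rightarrow$(e), (e)$\Rightarrow$(b) and (f)$\Rightarrow$(g)$\Rightarrow$(i), (f)$\Rightarrow$(h)$\Rightarrow$(i)---collects the monotonicity relations among the various $\varphi$-slopes; each separate implication is a direct consequence of parts (i)--(iv) of Proposition~\ref{6P3}.

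For the subdifferential implications, (v) is an application of Proposition~\ref{6P3}(v) (the Asplund/closed-graph estimate derived from the fuzzy sum rule), while the equivalences (vi) follow from Proposition~\ref{6P3}(vi). Here I would record that, whenever $Y$ is Fr\'echet smooth, the function $y\mapsto\|y-\by\|$ is Fr\'echet differentiable away from $\by$, so that for the specific $g$ of the form \eqref{gphi} with $\varphi\in C^1(\R_+)$ the differentiability hypothesis on $g$ in Proposition~\ref{6P3}(vi) is automatic; this is precisely what allows condition (b) of that proposition to be stated in terms of smoothness of the \emph{space} $Y$ rather than of $g$. The convex case (vii) then combines these equivalences with the convex variant of Proposition~\ref{6P3}(v)--(vi) and with (ii)--(iii). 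Item (iv)---the converse implication (a)$\Rightarrow$(h) in the convex case with the loss factor $\vartheta[\varphi]$---is precisely Proposition~\ref{iti}, rephrased by setting $\gamma<\vartheta[\varphi]\tau$ and $\tau<\sr_\varphi[F](\bar{x},\by)$. Finally, (viii) is Proposition~\ref{ows2}, which identifies the strict subdifferential $\varphi$-slopes $\overline{|\sd{F}|}{}_\varphi=\overline{|\sd{F}|}{}^{a}_\varphi$ with $\inf\{\|x^*\|\mid x^*\in\overline{D}{}^{*>}_\varphi F(\bx,\by)(\Sp^*_{Y^*})\}$ in the finite-dimensional setting.

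The main obstacle is purely organizational rather than analytic: one must traverse the web of equivalences without gaps and, in each line, check that the ``i.e.\ \ldots, and consequently \ldots'' clauses inside (b)--(j) are faithful re-encodings of the slope definitions. Once the dictionary of the first paragraph is verified, no new analytic argument is required beyond the already-proved statements; in particular no variational principle needs to be reinvoked, since Theorem~\ref{6T1} already absorbs that machinery through its reduction to Theorem~\ref{3T1}.
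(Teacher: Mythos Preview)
Your plan is essentially the intended one: the paper gives no standalone proof of this corollary, treating it as a direct consequence of Theorem~\ref{6T1}, Proposition~\ref{6P3}, Proposition~\ref{iti}, and Proposition~\ref{ows2}, together with the dictionary between the strict-slope constants and their quantifier unfoldings. Your identification of which earlier result feeds each item (i)--(viii) is accurate.

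One small gap is worth flagging. For item (vii) you invoke ``the convex variant of Proposition~\ref{6P3}(v)--(vi)'', but Proposition~\ref{6P3}(vi) as stated carries the standing hypothesis that $Y$ is Fr\'echet smooth; its final clause (``if both $F$ and $\varphi$ are convex, then (i) and (ii) also hold as equalities'') only yields the \emph{primal} identifications $\overline{|\nabla F|}{}^{\diamond}_\varphi=\overline{|\nabla F|}{}^{+}_\varphi=\overline{|\nabla F|}{}_\varphi$, not the subdifferential ones needed for (h)$\Leftrightarrow$(d) and (i)$\Leftrightarrow$(e). To obtain (vii) without Fr\'echet smoothness you should instead pass through the $g$-setting: when $\varphi$ is convex and nondecreasing, the function $g=\varphi(\|\cdot-\bar y\|)$ is convex, the $\varphi$-slopes coincide with the corresponding $g$-slopes by Proposition~\ref{6P0} and Proposition~\ref{6P1} (and the remark following \eqref{phi-masss}), and then Proposition~\ref{5P6}(iii) (equivalently, Corollary~\ref{5C1.1}(vi)) delivers all five equalities at once. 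With that correction your outline is complete.
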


The conclusions of Corollary~\ref{6C1.1} are illustrated in
Fig.~\ref{fig.7}.

\begin{figure}[!htb]
$$\xymatrix{
&&&&{\rm (c)}\ar[d]
\\
&&{\rm (d)}\ar[rr]
\ar @{} [drr] |{\substack{X,Y\,{\rm Asplund}\\\gph F\, {\rm closed}}}
&&{\rm (e)}\ar[rr]
\ar@/_/@{-->}[ll]_{F,\varphi\,{\rm convex}}
\ar@/^1pc/@{-->}[dd]
&&{\rm (b)} \ar@/_/@{-->}[rr]_{\substack{\tau\le\ga\\X,Y\, {\rm complete}\\\gph F\, {\rm closed}}}
\ar@/_/@{-->}[ll]_{F,\varphi\,{\rm convex}}
&&{\rm (a)} \ar@/_/@{-->}[ll]_{\ga<\tau} \ar@/^5pc/@{-->}[lllllldd]|{\substack{X,Y\,{\rm normed}\\F\,{\rm convex}\\\ga<\vartheta[\varphi]\tau}}
\\
{\rm (j)}\ar@{-->}[rr]_(.65){\substack{\dim X<\infty\\\dim Y<\infty}}
\ar@{-->}[rrd]
&&{\rm (f)}\ar[rr]
\ar@{-->}[u]
\ar[d]
\ar@{-->}[ll]
&&{\rm (g)}
\ar@{-->}[u]
\ar[d]
\\
&&{\rm (h)}\ar[rr]\ar@{-->}[llu]\ar@/^/@{-->}[u]
&&{\rm (i)}
\ar@/_1pc/@{-->}[uu]_{\substack{[Y\,{\rm smooth}\smallskip\\(X\, {\rm Asplund}\\\gph F\, {\rm closed})\\{\rm or}\\ F\,{\rm convex}]\\{\rm or}\smallskip\\ F,\varphi\,{\rm convex}}}
\ar@/^/@{-->}[ll]^(.35){F,\varphi\,{\rm convex}}
}$$
\caption{Corollary~\ref{6C1.1} \label{fig.7}}
\end{figure}

\begin{corollary}[Qualitative criteria]\label{C8}
Suppose $X$ and $Y$ are complete metric spaces and $\gph F$ is locally closed near $(\bar{x},\by)$.
Then,
$F$ is metrically $\varphi$-subregular at $(\bx,\by)$, provided that one of the following conditions holds true:
\renewcommand {\theenumi} {\alph{enumi}}
\begin{enumerate}
\item
$\overline{|\nabla{F}|}{}^{\diamond}_\varphi(\bar{x},\by)>0$; \item
$\ds\liminf_{\substack{x\to\bx\\x\notin F\iv(\by),\,y\in F(x)}} \frac{\varphi(d(y,\by))}{d(x,\bx)}>0$;
\item
$\overline{|\nabla{F}|}{}_\varphi(\bar{x},\by)>0$
or equivalently,
$\ds\lim_{\rho\downarrow0}
\inf_{\substack{d(x,\bx)<\rho,\,d(y,\by)<\rho\\
(x,y)\in\gph F,\,x\notin F\iv(\by)}}\,
\varphi'(d(y,\by))|\nabla{F}|_{\rho}(x,y)>0;$
\item
$\overline{|\nabla{F}|}{}^+_\varphi(\bar{x},\by)>0$,
or equivalently,
$\ds\lim_{\rho\downarrow0}
\inf_{\substack{d(x,\bx)<\rho,\,\frac{\varphi(d(y,\by))}{d(x,\bx)}<\rho\\
(x,y)\in\gph F,\,x\notin F\iv(\by)}}\,
\varphi'(d(y,\by))|\nabla{F}|_{\rho}(x,y)>0.$
\cnta
\end{enumerate}
If $X$ and $Y$ are Asplund spaces, then the following conditions are also sufficient:
\begin{enumerate}
\cntb
\item
$\overline{|\sd{F}|}{}^a_\varphi(\bar{x},\by)>0$, or equivalently,
$\ds\lim_{\rho\downarrow0}
\inf_{\substack{\|x-\bx\|<\rho,\,\|y-\by\|<\rho\\
(x,y)\in\gph F,\,x\notin F\iv(\by)}}\,
\varphi'(\|y-\by\|)|\sd{F}|_{\xi_\varphi(y)\rho}^a(x,y)>0;$
\item
$\overline{|\sd{F}|}{}^{a+}_\varphi(\bar{x},\by)>0$, or equivalently,
$\ds\lim_{\rho\downarrow0}
\inf_{\substack{\|x-\bx\|<\rho,\,\frac{\varphi(\|y-\by\|)}{\|x-\bx\|}<\rho\\
(x,y)\in\gph F,\,x\notin F\iv(\by)}}\,
\varphi'(\|y-\by\|)|\sd{F}|_{\xi_\varphi(y)\rho}^a(x,y)>0.$
\cnta
\end{enumerate}
If $X$ and $Y$ are Banach spaces, then the next two conditions:
\begin{enumerate}
\cntb
\item
$\overline{|\sd{F}|}{}_\varphi(\bar{x},\by)>0$, or equivalently,
$\ds\lim_{\rho\downarrow0}
\inf_{\substack{\|x-\bx\|<\rho,\,\|y-\by\|<\rho\\
(x,y)\in\gph F,\,x\notin F\iv(\by)}}\,
\varphi'(\|y-\by\|)|\sd{F}|_{\xi_\varphi(y)\rho}(x,y)>0,$
\item
$\overline{|\sd{F}|}{}^+_\varphi(\bar{x},\by)>0$, or equivalently,
$\ds\lim_{\rho\downarrow0}
\inf_{\substack{\|x-\bx\|<\rho,\,\frac{\varphi(\|y-\by\|)}{\|x-\bx\|}<\rho\\
(x,y)\in\gph F,\,x\notin F\iv(\by)}}\,
\varphi'(\|y-\by\|)|\sd{F}|_{\xi_\varphi(y)\rho}(x,y)>0,$
\cnta
\end{enumerate}
are sufficient, provided that one of the following conditions is satisfied:
\begin{itemize}
\item
$X$ is Asplund and $Y$ is Fr\'echet smooth,
\item
$F$ is convex and either $Y$ is Fr\'echet smooth or $\varphi$ is convex.
\end{itemize}
If $X$ and $Y$ are finite dimensional normed spaces, then the following condition is also sufficient:
\begin{enumerate}
\cntb
\item
$0\notin \overline{D}{}^{*>}_\varphi F(\bx,\by) (\Sp^*_{Y^*})$.
\end{enumerate}
\renewcommand {\theenumi} {\roman{enumi}}
Moreover,
\begin{enumerate}
\item
condition {\rm (a)} is also necessary for the metric $\varphi$-subregularity of $F$ at $(\bx,\by)$;
\item
{\rm (b) \folgt (d)},
{\rm (c) \folgt (d)},
{\rm (d) \folgt (a)},
{\rm (e) \folgt (f) \folgt (h)},
{\rm (e) \folgt (g) \folgt (h)}.
\cnta
\end{enumerate}
Suppose $X$ and $Y$ are Banach spaces.
\begin{enumerate}
\cntb
\item
If $X$ and $Y$ are Asplund, then {\rm (e) \folgt (c)} and {\rm (f) \folgt (d)};
\item
if $Y$ is Fr\'echet smooth and either $X$ is Asplund or $F$ is convex near $(\bar{x},\by)$, then {\rm (e) \iff (c)} and {\rm (f) \iff (d)};
\item
if $F$ is convex near $(\bx,\by)$ and $\vartheta[\varphi]>0$, then
condition {\rm (g)} is also necessary for the metric $\varphi$-sub\-regularity of $F$ at $(\bx,\by)$;
\item
if $F$ is convex near $(\bx,\by)$ and $\varphi$ is convex near $0$, then
{\rm (a) \iff (c) \iff (d) \iff \rm (g) \iff (h)};
\item
if $X$ and $Y$ are finite dimensional normed spaces, then
{\rm (e) \iff (g) \iff (i)}.
\end{enumerate}
\end{corollary}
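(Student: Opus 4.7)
The plan is to obtain Corollary~\ref{C8} as the qualitative ($\gamma\downarrow 0$) shadow of the quantitative Corollary~\ref{6C1.1}, so the proof is essentially a passage-to-the-limit exercise built on top of Theorem~\ref{6T1}, Proposition~\ref{6P3}, Proposition~\ref{iti} and Proposition~\ref{ows2}. The backbone is Theorem~\ref{6T1}(ii): under completeness of $X,Y$ and local closedness of $\gph F$ near $(\bar x,\bar y)$, $\sr_\varphi[F](\bar x,\bar y)=\overline{|\nabla F|}{}^{\diamond}_\varphi(\bar x,\bar y)$, so (a) is simultaneously necessary and sufficient; this establishes claim (i) and reduces every implication of the form ``condition $\Rightarrow$ $\varphi$-subregularity'' to showing that the condition forces $\overline{|\nabla F|}{}^{\diamond}_\varphi(\bar x,\bar y)>0$.

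For the primal chain, I would invoke Proposition~\ref{6P3}(i)--(ii): the lower bound on the nonlocal $(\varphi,\rho)$-slope by $\max\{\varphi'(d(y,\bar y))|\nabla F|_\rho(x,y),\,\varphi(d(y,\bar y))/d_\rho((x,y),(\bar x,\bar y))\}$ yields the implications (b)$\Rightarrow$(d), (c)$\Rightarrow$(d), (d)$\Rightarrow$(a) by taking $\inf$ and the limit $\rho\downarrow0$. The subdifferential implications (e)$\Rightarrow$(f)$\Rightarrow$(h) and (e)$\Rightarrow$(g)$\Rightarrow$(h) are just Proposition~\ref{6P3}(iii)--(iv). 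In the Asplund/closed-graph setting, Proposition~\ref{6P3}(v) gives $\overline{|\nabla F|}_\varphi\ge\overline{|\sd F|}{}^a_\varphi$ and $\overline{|\nabla F|}{}^+_\varphi\ge\overline{|\sd F|}{}^{a+}_\varphi$, yielding (e)$\Rightarrow$(c) and (f)$\Rightarrow$(d) — this covers sufficiency of (e)--(f) and claim (iii). For (g)--(h), I use Proposition~\ref{6P3}(vi), whose hypotheses (Fr\'echet smooth $Y$ plus either Asplund $X$ and closed graph, or convex $F$) upgrade the above inequalities to equalities; this delivers claim (iv) and also sufficiency of (g)--(h) under the listed conditions.

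The convex case in (vi) combines Proposition~\ref{6P3}(vi)(b) (equalities among all the slopes when $F,\varphi$ are convex) with the already-established chain (a)$\Leftrightarrow$(c)$\Leftrightarrow$(d) to collapse everything. For necessity of (g) in (v) — the one place where more than bookkeeping is needed — I call on Proposition~\ref{iti}: $\vartheta[\varphi]\sr_\varphi[F](\bar x,\bar y)\le\overline{|\sd F|}_\varphi(\bar x,\bar y)$. If $\vartheta[\varphi]>0$ and $F$ is metrically $\varphi$-subregular then $\sr_\varphi[F]>0$, hence $\overline{|\sd F|}_\varphi>0$, i.e.\ (g) holds. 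Finally, the finite-dimensional equivalences in (j) and claim (vii) are immediate from Proposition~\ref{ows2}, which identifies $\overline{|\sd F|}_\varphi(\bar x,\bar y)=\overline{|\sd F|}{}^a_\varphi(\bar x,\bar y)=\inf_{x^*\in\overline{D}{}^{*>}_\varphi F(\bar x,\bar y)(\Sp^*_{Y^*})}\|x^*\|$; positivity of this infimum is exactly $0\notin\overline{D}{}^{*>}_\varphi F(\bar x,\bar y)(\Sp^*_{Y^*})$.

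The main obstacle is not mathematical difficulty but combinatorial accounting: each of (a)--(j) is accompanied by a different bundle of hypotheses (metric vs.\ Banach vs.\ Asplund vs.\ Fr\'echet smooth, $\gph F$ locally closed, convexity of $F$, convexity or smoothness of $\varphi$, $\vartheta[\varphi]>0$, finite dimensionality), and one must verify that the hypothesis bundle on each implication in the statement matches exactly the hypothesis bundle under which the corresponding item of Propositions~\ref{6P3}, \ref{iti}, \ref{ows2} or Theorem~\ref{6T1} is valid. The only genuinely new ingredient beyond bookkeeping is the use of Proposition~\ref{iti} for the necessity assertion in (v), where the constant $\vartheta[\varphi]$ must be strictly positive to prevent the bound from degenerating.
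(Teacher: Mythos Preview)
Your proposal is correct and follows exactly the approach the paper intends: Corollary~\ref{C8} is stated without a separate proof and is meant to be read off from Theorem~\ref{6T1}, Proposition~\ref{6P3}, Proposition~\ref{iti} and Proposition~\ref{ows2} (equivalently, as the $\gamma\downarrow0$ shadow of Corollary~\ref{6C1.1}). Your identification of which numbered result supports each implication, including the use of Proposition~\ref{iti} for the necessity assertion (v) and of Proposition~\ref{ows2} for (vii), matches the paper's logical structure.
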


The conclusions of Corollary~\ref{C8} are illustrated in
Fig.~\ref{fig.8}.

\begin{figure}[!htb]
$$\xymatrix@C=1cm{
&&*+[F]{\sr_\varphi[F](\bx,\by)>0} \ar[d]
\\
&*+[F]{\ds\liminf_{\substack{x\to\bx\\x\notin F\iv(\by)\\ y\in F(x)}} \frac{\varphi(d(y,\by))}{d(x,\bx)}>0}\ar[dr]
&*+[F]{\overline{|\nabla{F}|}{}^{\diamond}_\varphi(\bar{x},\by)>0}
\ar[u]
\ar@/^/@{-->}[d]^{F,\varphi\,{\rm convex}}
\ar
@{-->}`r /30pt[d] `^rd/4pt[dd] `_l[l] `[l]|{\substack{X,Y\,{\rm Banach}\\F\,{\rm convex}\\\vartheta[\varphi]>0}} [lddd]
\\
&*+[F]{\overline{|\nabla{F}|}_\varphi(\bar{x},\by)>0}
\ar[r]
\ar@/^/@{-->}[d]
\ar@{} [dr] |{\substack{X\,{\rm Banach},\,Y\,{\rm  smooth}\smallskip\\X\,{\rm Asplund}\,{\rm or}\,F\,{\rm convex}}}
&*+[F]{\overline{|\nabla{F}|}{}^{+}_\varphi(\bar{x},\by)>0}
\ar[u]
\ar@/_/@{-->}[d]
\ar@/_/@{-->}[l]_{F,\varphi\,{\rm convex}}
\\
*+[F]{0\notin \overline{D}{}^{*>}_\varphi F(\bx,\by) (\Sp^*_{Y^*})}
\ar@{-->}[r]
\ar@{-->} [dr]^(.65){\substack{\dim X<\infty\\\dim Y<\infty}} &*+[F]{\overline{|\sd{F}|}{}^a_\varphi(\bar{x},\by)>0}
\ar[r]
\ar@{-->}[l]
\ar@/^/@{-->}[u]^{X,Y\,{\rm Asplund}}
\ar[d]
&*+[F]{\overline{|\sd{F}|}{}^{a+}_\varphi(\bar{x},\by)>0}
\ar@/_/@{-->}[u]_{X,Y\,{\rm Asplund}}
\ar[d]
\\
&*+[F]{\overline{|\sd{F}|}_\varphi(\bar{x},\by)>0}
\ar[r]
\ar@{-->}@/^/[u]
\ar@{-->}[ul]
&*+[F]{\overline{|\sd{F}|}{}^{+}_\varphi(\bar{x},\by)>0}
\ar@/_4.5pc/@{-->}[uu]|{\substack{X,Y\,{\rm Banach}\\F,\varphi\,{\rm convex}}}
\ar@/^/@{-->}[l]^{F,\varphi\,{\rm convex}}
}$$
\caption{Corollary~\ref{C8}}\label{fig.8}
\end{figure}

The next example illustrates the computation of the constants involved in the definition and characterizations of metric $\varphi$-subregularity.

\begin{example}
Consider a mapping $F:\R\to\R$ given by
$
F(x):=
1-\cos x.
$
One has $(0,0)\in\gph F$, $F(x)>0$ for all $x\ne0$ near 0 and $\lim_{x\to0}F(x)/x=0$.
Hence, $F$ is not metrically subregular at $(0,0)$.
Define
$$
\varphi(t):=
\begin{cases}
\arccos(1-t) & \text{if } 0\le t<\frac{1}{2},\\
\frac{\pi}{3}+\frac{2t-1}{\sqrt{3}} & \text{if } t\ge\frac{1}{2}.
\end{cases}
$$
Then $\varphi(0)=0$ and $\varphi$ is continuously differentiable on $\R_+$ with
$$
\varphi'(t):=
\begin{cases}
+\infty & \text{if } t=0,\\
\frac{1}{\sqrt{t(2-t)}} & \text{if } 0<t<\frac{1}{2},\\
\frac{2}{\sqrt{3}} & \text{if } t\ge\frac{1}{2}.
\end{cases}
$$
(Thanks to Remark~\ref{R5.1}, it is sufficient to define $\varphi$ near 0 only.)
The modulus of metric $\varphi$-subregularity \eqref{6CMR-g} can be easily computed:
\begin{gather*}
\sr_\varphi[F](0,0)=
\liminf_{\substack{x\to0\\x\notin F\iv(0)}} \frac{\varphi(d(0,F(x)))}{d(x,F^{-1}(0))}=
\lim_{\substack{x\to0}} \frac{\varphi(1-\cos x)} {|x|}=
\lim_{\substack{x\to0}} \frac{\arccos(\cos x)} {|x|}=1.
\end{gather*}
Hence, $F$ is metrically $\varphi$-subregular at $(0,0)$ with constant 1.

This result can also be deduced from Theorem~\ref{6T1}(ii).
For that, one needs to compute the uniform strict $\varphi$-slope \eqref{6uss}.
Let $x\ne0$, $|x|<\pi/3$, $y=1-\cos x$, and $\rho\in(0,1)$.
Then the nonlocal $(\varphi,\rho)$-slope \eqref{6nls} of $F$ at $(x,y)$ takes the following form:
\begin{align*}
|\nabla{F}|_{\varphi,\rho}^{\diamond}(x,y)&=
\sup_{\substack{(u,v)\ne(x,y)\\(u,v)\in\gph F}}
\frac{\varphi(|y|)-\varphi(|v|)}{d_\rho((u,v),(x,y))}=
\sup_{u\ne x}
\frac{|x|-\varphi(1-\cos u)} {\max\{|u-x|,\rho|\cos u-\cos x|\}}=\sup_{u\ne x}
\frac{|x|-\varphi(1-\cos u)} {|u-x|}.
\end{align*}
If $1/2<\cos u\le1$, then
\begin{gather*}
\frac{|x|-\varphi(1-\cos u)} {|u-x|}=\frac{|x|-|u|} {|u-x|}\le1,
\end{gather*}
and the equality holds when $u=0$.
If $\cos u\le1/2$, then $|u|\ge\pi/3$ and
\begin{gather*}
\frac{|x|-\varphi(1-\cos u)} {|u-x|}=\frac{|x|-\frac{\pi}{3}-\frac{1-2\cos u} {\sqrt{3}}} {|u-x|}\le\frac{|x|-\frac{\pi}{3}} {|u-x|}\le1.
\end{gather*}
Hence, $|\nabla{F}|_{\varphi,\rho}^{\diamond}(x,y)=1$,
and consequently
$\overline{|\nabla{F}|}{}^{\diamond}_\varphi(0,0)=1$.

Metric $\varphi$-subregularity of $F$ can also be established from the estimates in Corollaries~\ref{6C1.1} and \ref{C8} after computing any of the local strict $\varphi$-slopes \eqref{phi-ss}, \eqref{mphi-ss}, \eqref{phi-asss} and \eqref{phi-masss} or the limiting outer $\varphi$-coderivative \eqref{D*21}.
The first two constants, in their turn, depend on the local $\rho$-slopes \eqref{srho} and \eqref{6asrs}.
Observe that the last two constants do not depend on $\varphi$.
For instance, the $\rho$-slope \eqref{srho} and the strict $\varphi$-slope \eqref{phi-ss} can be computed similarly to the above.
Let $x\ne0$, $|x|<\pi/3$, $y=1-\cos x$, and $\rho\in(0,1)$.
Then
\begin{gather*}
|\nabla{F}|_{\rho}(x,y)=
\limsup_{\substack{(u,v)\to(x,y),\,
(u,v)\ne(x,y)\\
(u,v)\in\gph F}}
\frac{[|y|-|v|]_+} {d_\rho((u,v),(x,y))}=
\limsup_{\substack{u\to x,\,
u\ne x}}
\frac{[\cos u-\cos x]_+} {|u-x|}=|\sin x|,
\\
\overline{|\nabla{F}|}{}_\varphi(0,0)=
\liminf_{\substack{x\to0,\,x\ne0}}\,
\varphi'(1-\cos x)\;|\sin x|=
\liminf_{\substack{x\to0,\,x\ne0}}\,
\frac{|\sin x|}{\sqrt{(1-\cos x)(1+\cos x)}}=1.
\end{gather*}
\qedtr
\end{example}

\subsection{H\"older Metric Subregularity}\label{S5}

Let a real number $q\in(0,1]$ be given.

A set-valued mapping $F:X\rightrightarrows Y$ between metric spaces is called H\"older metrically subregular of order $q$ at $(\bx,\by)\in\gph F$ with constant $\tau>0$ iff there exists a neighbourhood $U$ of $\bx$, such that
\begin{equation}\label{HMR}
\tau d(x,F^{-1}(\by))\le (d(\by,F(x)))^q\quad \mbox{for all } x\in U.
\end{equation}

This property is a special case of the metric $\varphi$-subregularity property when
\begin{gather}\label{gq}
\varphi(t)=t^q,\quad t\in\R_+.
\end{gather}
It is easy to check, that function $\varphi$ defined by \eqref{gq} is continuously differentiable (with possibly infinite $\varphi'(0)$ understood as the right-hand derivative) and satisfies conditions ($\Phi1$) and ($\Phi2$).
In particular,
\begin{gather*}
\varphi'(t)=qt^{q-1},\quad t\in\R_+\setminus\{0\}
\quad\text{and}\quad
\varphi'(0)=
\begin{cases}
1 & \text{if } q=1,\\
+\infty & \text{if } 0<q<1.
\end{cases}
\end{gather*}

The representations and estimates of the previous section are applicable and lead to a series of criteria of H\"older metric subregularity; cf. \cite{Kru15.2}.

\section{Conclusions}
This article demonstrates how nonlinear metric subregularity properties of set-valued mappings between general metric or Banach spaces can be treated in the framework of the theory of (linear) error bounds for extended real-valued functions of two variables and provides a comprehensive collection of quantitative and qualitative regularity criteria with the relationships between the criteria identified and illustrated.
Several kinds of primal and subdifferential slopes of set-valued mappings are used in the criteria.

\section*{Acknowledgements}
The research was supported by the Australian Research Council, project DP110102011.

The author wishes to thank two of the three anonymous referees for the careful reading of the manuscript and many constructive comments and suggestions.

\section*{Conflict of Interest}
The author declares that he has no conflict of interest.


\bibliographystyle{spmpsci_unsrt}
\bibliography{buch-kr,kruger,kr-tmp}
\end{document}